\theoremstyle{plain}
\newtheorem{thm}{Theorem}[section] 
\newtheorem{cor}[thm]{Corollary}
\newtheorem{prop}[thm]{Proposition}
\newtheorem{lem}[thm]{Lemma}
\newtheorem{question}{Question}
\theoremstyle{definition} 
\newtheorem{defn}[thm]{Definition}
\newtheorem{setting}[thm]{Setting}
\newtheorem{eg}[thm]{Example}
\newtheorem{defn-prop}[thm]{Definition-Proposition}
\theoremstyle{remark}
\newtheorem{rem}[thm]{Remark}
\newtheorem*{acknowledgement}{Acknowledgments}
\def\ge{\geqslant}
\def\phi{\varphi}
\def\epsilon{\varepsilon}
\def\tilde{\widetilde}
\def\bar{\overline}
\def\mapsto{\longmapsto}
\newcommand{\sO}{\mathcal{O}}
\newcommand{\J}{\mathcal{J}}
\newcommand{\sL}{\mathcal{L}}
\newcommand{\B}{\mathcal{B}}
\newcommand{\F}{\mathbb{F}}
\newcommand{\N}{\mathbb{N}}
\newcommand{\Q}{\mathbb{Q}} 
\newcommand{\C}{\mathbb{C}} 
\newcommand{\R}{\mathbb{R}} 
\newcommand{\Z}{\mathbb{Z}}
\newcommand{\x}{\mathbf{x}}
\newcommand{\ba}{\mathfrak{a}}
\newcommand{\m}{\mathfrak{m}}
\newcommand{\p}{\mathfrak{p}}
\newsavebox{\circlebox}
\savebox{\circlebox}{\fontencoding{OMS}\selectfont\Large\char13}
\newlength{\circleboxwdht}
\def\Spec{\operatorname{Spec}}
\def\Div{\operatorname{div}}
\def\Ker{\operatorname{Ker}}
\def\Ann{\operatorname{Ann}}
\def\Exc{\operatorname{Exc}}
\def\id{\operatorname{id}}
\def\cl{\operatorname{cl}}
\def\gen{\operatorname{gen}}
\DeclareMathOperator*{\ulim}{ulim}
\title[BCM test ideals in equal characteristic zero via ultraproducts]{Big Cohen-Macaulay test ideals in equal characteristic zero via ultraproducts}
\author{Tatsuki Yamaguchi}
\address{Graduate School of Mathematical Sciences, University of Tokyo, 3-8-1 Komaba, Meguro-ku, Tokyo 153-8914, Japan}
\email{tyama@ms.u-tokyo.ac.jp}
\subjclass[2020]{14F18, 14B05}
\keywords{Multiplier ideals, Big Cohen-Macaulay algebras, Ultraproducts}
\begin{document}

\begin{abstract}
	 Utilizing ultraproducts, Schoutens constructed a big Cohen-Macaulay algebra $\B(R)$ over a local domain $R$ essentially of finite type over $\C$. We show that if $R$ is normal and $\Delta$ is an effective $\Q$-Weil divisor on $\Spec R$ such that $K_R+\Delta$ is $\Q$-Cartier, then the BCM test ideal $\tau_{\widehat{\B(R)}}(\widehat{R},\widehat{\Delta})$ of $(\widehat{R},\widehat{\Delta})$ with respect to $\widehat{\B(R)}$ coincides with the multiplier ideal $\J(\widehat{R},\widehat{\Delta})$ of $(\widehat{R},\widehat{\Delta})$, where $\widehat{R}$ and $\widehat{\B(R)}$ are the $\m$-adic completions of $R$ and $\B(R)$, respectively, and $\widehat{\Delta}$ is the flat pullback of $\Delta$ by the canonical morphism $\Spec \widehat{R}\to\Spec R$. As an application, we obtain a result on the behavior of multiplier ideals under pure ring extensions.
\end{abstract}

\maketitle


\section{Introduction}
  A (balanced) big Cohen-Macaulay algebra over a Noetherian local ring $(R,\m)$ is an $R$-algebra $B$ such that every system of parameters is a regular sequence on $B$. Its existence implies many fundamental homological conjectures including the direct summand conjecture (now a theorem). Hochster and Huneke \cite{HH92}, \cite{HH95} proved the existence of a big Cohen-Macaulay algebra in equal characteristic, and Yves Andr\'{e} \cite{And} settled the mixed characteristic case. Recently, using big Cohen-Macaulay algebras, Ma and Schwede \cite{Ma-Schwede regular}, \cite{Ma-Schwede} introduced the notion of BCM test ideals as an analog of test ideals in tight closure theory.
  
  The test ideal $\tau(R)$ of a Noetherian local ring $R$ of positive characteristic was originally defined as the annihilator ideal of all tight closure relations of $R$.
  Since it turned out that $\tau(R)$ was related to multiplier ideals via reduction to characteristic $p$, the definition of $\tau(R)$ was generalized in \cite{HY}, \cite{interpret. multiplier} to involve effective $\Q$-Weil divisors $\Delta$ on $\Spec R$ and ideals $\ba\subseteq R$ with real exponent $t>0$. In these papers, it was shown that multiplier ideals coincide, after reduction to characteristic $p \gg 0$, with such generalized test ideals $\tau(R,\Delta,\ba^t)$. In positive characteristic, Ma-Schwede's BCM test ideals are the same as the generalized test ideals. In this paper, we study BCM test ideals in equal characteristic zero.
  
  Using ultraproducts, Schoutens \cite{log-terminal} gave a characterization of log-terminal singularities, an important class of singularities in the minimal model program. He also gave an explicit construction of a big Cohen-Macaulay algebra $\B(R)$ in equal characteristic zero: $\B(R)$ is described as the ultraproduct of the absolute integral closures of Noetherian local domains of positive characteristic. He defined a closure operation associated to $\B(R)$ to introduce the notions of $\B$-rationality and $\B$-regularity, which are closely related to BCM rationality and BCM regularity defined in \cite{Ma-Schwede}, and proved that $\B$-rationality is equivalent to being rational singularities. The aim of this paper is to give a geometric characterization of BCM test ideals associated to $\B(R)$. Our main result is stated as follows:
   \begin{thm}[Theorem \ref{Main Theorem}]\label{introduction main theorem}
  	Let $R$ be a normal local domain essentially of finite type over $\C$. Let $\Delta$ be an effective $\Q$-Weil divisor on $\Spec R$ such that $K_R+\Delta$ is $\Q$-Cartier, where $K_R$ is a canonical divisor on $\Spec R$. Suppose that $\widehat{R}$ and $\widehat{\B(R)}$ are the $\m$-adic completions of $R$ and $\B(R)$, and $\widehat{\Delta}$ is the flat pullback of $\Delta$ by the canonical morphism $\Spec \widehat{R}\to\Spec R$. Then we have
  	\[
  		\tau_{\widehat{\B(R)}}(\widehat{R},\widehat{\Delta})=\J(\widehat{R},\widehat{\Delta}),
  	\]
  	where $\tau_{\widehat{\B(R)}}(\widehat{R},\widehat{\Delta})$ is the BCM test ideal of $(\widehat{R},\widehat{\Delta})$ with respect to $\widehat{\B(R)}$ and $\J(\widehat{R},\widehat{\Delta})$ is the multiplier ideal of $(\widehat{R},\widehat{\Delta})$.
  \end{thm}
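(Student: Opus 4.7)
The plan is to establish the two inclusions separately, with the harder direction going through Schoutens' ultraproduct construction of $\B(R)$ as $\ulim R_p^{+}$ and the positive-characteristic theorem of Hara--Yoshida and Takagi that generalized test ideals coincide with multiplier ideals after reduction mod $p\gg 0$.

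For the inclusion $\J(\widehat{R},\widehat{\Delta})\subseteq\tau_{\widehat{\B(R)}}(\widehat{R},\widehat{\Delta})$, I would adapt the BCM-subadjunction argument of Ma--Schwede. Since $\widehat{\B(R)}$ is a balanced big Cohen-Macaulay algebra over $\widehat{R}$, one can fix a log resolution $\pi\colon Y\to\Spec\widehat{R}$ of $(\widehat{R},\widehat{\Delta})$ and consider the trace map $\pi_{*}\sO_Y(\lceil K_Y-\pi^{*}(K_{\widehat{R}}+\widehat{\Delta})\rceil)\to\widehat{R}$ whose image is, by definition, $\J(\widehat{R},\widehat{\Delta})$. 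A diagram chase using local duality together with the BCM property of $\widehat{\B(R)}$ then shows that this image annihilates every class in $H^{\dim\widehat{R}}_{\m}(\widehat{R})$ (suitably twisted by $\widehat{\Delta}$) whose image in $H^{\dim\widehat{R}}_{\m}(\widehat{\B(R)})$ vanishes, which is precisely the condition defining $\tau_{\widehat{\B(R)}}(\widehat{R},\widehat{\Delta})$.

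For the reverse inclusion $\tau_{\widehat{\B(R)}}(\widehat{R},\widehat{\Delta})\subseteq\J(\widehat{R},\widehat{\Delta})$, I would spread $(R,\Delta)$ out to a family of Noetherian local domains $R_p$ of characteristic $p>0$ with effective $\Q$-Weil divisors $\Delta_p$, so that $R$ is the ultralimit of the $R_p$ and $\Delta$ is the ultralimit of the $\Delta_p$. For almost all $p$, BCM closure with respect to $\widehat{R_p^{+}}$ agrees with tight closure in $\widehat{R_p}$, and hence the BCM test ideal associated with $\widehat{R_p^{+}}$ coincides with the generalized test ideal $\tau(\widehat{R_p},\widehat{\Delta_p})$. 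The key step is to show that the annihilator condition defining $\tau_{\widehat{\B(R)}}(\widehat{R},\widehat{\Delta})$ transfers through the ultralimit, so that this ideal is realized as $\ulim\tau(\widehat{R_p},\widehat{\Delta_p})$. Applying the equality $\tau(\widehat{R_p},\widehat{\Delta_p})=\J(\widehat{R_p},\widehat{\Delta_p})$ of Hara--Yoshida and Takagi for $p\gg 0$, and spreading out a log resolution of $(R,\Delta)$ so that multiplier ideals commute with reduction mod $p$, the ultralimit of these multiplier ideals recovers $\J(\widehat{R},\widehat{\Delta})$.

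The main obstacle will be making the interaction of the ultraproduct with the completions $\widehat{R}$ and $\widehat{\B(R)}$ rigorous. Specifically, one must verify the compatibility of annihilators, of the local cohomology modules $H^{\dim\widehat{R}}_{\m}(-)$, and of $\m$-adic completion, with the formation of ultralimits, in order to identify $\tau_{\widehat{\B(R)}}(\widehat{R},\widehat{\Delta})$ with $\ulim\tau(\widehat{R_p},\widehat{\Delta_p})$. A secondary technical point is arranging $\Delta$ and the log resolution on a common integral model over which the geometric picture and the tight-closure-theoretic picture can be reduced mod $p$ coherently. Once these technical lemmas are in place, combining the two inclusions yields the theorem.
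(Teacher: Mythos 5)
Your overall strategy (prove the two inclusions separately, exploit $\B(R)=\ulim_p R_p^{+}$, and use the positive-characteristic equality of test ideals with reductions of multiplier ideals) is the same as the paper's, but you have attached the two key techniques to the wrong inclusions, and as written each half has a genuine gap. For $\J(\widehat{R},\widehat{\Delta})\subseteq\tau_{\widehat{\B(R)}}(\widehat{R},\widehat{\Delta})$: a ``diagram chase using local duality together with the BCM property of $\widehat{\B(R)}$'' does not suffice. For an arbitrary big Cohen--Macaulay $R^{+}$-algebra in equal characteristic zero this inclusion is not known unconditionally --- Section 9 of the paper establishes it only conditionally on Question \ref{question}. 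What makes it work for $\B(R)$ is precisely its ultraproduct structure: Proposition \ref{multiplier subset BCM-test} writes $\J(R,\Delta)=\ulim_p\tau(R_p,\Delta_p)\cap R$ (Proposition \ref{multiplier approximation}), invokes Proposition \ref{BCM test tight closure} to get $0^{B_p,K_{R_p}+\Delta_p}_{H^d_{\m_p}(\omega_{R_p})}=0^{*\Delta_p}_{H^d_{\m_p}(\omega_{R_p})}$ for almost all $p$, and then checks annihilation by an explicit computation with parameters. Your purely characteristic-zero duality argument must be replaced by this reduction-mod-$p$ input.

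For $\tau_{\widehat{\B(R)}}(\widehat{R},\widehat{\Delta})\subseteq\J(\widehat{R},\widehat{\Delta})$: the assertion that ``the annihilator condition transfers through the ultralimit, so that this ideal is realized as $\ulim_p\tau(\widehat{R_p},\widehat{\Delta_p})$'' is exactly where the difficulty sits, and it does not follow from formal compatibility of annihilators and local cohomology with ultralimits. The natural map $H^d_\m(\omega_R)\to\ulim_p H^d_{\m_p}(\omega_{R_p})$ is very far from surjective (and is not even known to be injective; see the remark following its construction in Section 3), so $0^{\B(R),K_R+\Delta}_{H^d_\m(\omega_R)}$ is only the \emph{preimage} of $\ulim_p 0^{*\Delta_p}$, and the annihilator of a preimage can be strictly larger than the contraction of $\ulim_p\Ann 0^{*\Delta_p}$; one cannot conclude that $a\in\tau_{\widehat{\B(R)}}$ forces $a_p\in\tau(R_p,\Delta_p)$ for almost all $p$. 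The paper closes this gap with the geometric ingredient you reserved for the other direction: taking a log resolution $\mu\colon Y\to\Spec R$ in characteristic zero, it shows $\Ker\delta\subseteq 0^{\B(R),K_R+\Delta}_{H^d_\m(\omega_R)}$, where $\delta$ is the Matlis dual of $H^0(Y,\sO_Y(\lceil K_Y-\mu^{*}(K_X+\Delta)\rceil))\hookrightarrow R$, by approximating the resolution, applying Lemma \ref{lemma of Main thm} to get $\eta_p\in 0^{*\Delta_p}$, and using the injectivity of $H^d_\m(B)\to\ulim_p H^d_{\m_p}(B_p)$ (Lemma \ref{injectivity local cohomology}); then $\tau_{\widehat{\B(R)}}=\Ann 0^{\B(R),K_R+\Delta}\subseteq\Ann\Ker\delta=\J(\widehat{R},\widehat{\Delta})$. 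Without this step your second inclusion does not go through.
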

  The inclusion $\J(\widehat{R},\widehat{\Delta}) \subseteq \tau_{\widehat{\B(R)}}(\widehat{R},\widehat{\Delta})$ is obtained by comparing reductions of the multiplier ideal modulo $p\gg 0$ to its approximations. We prove the opposite inclusion by combining an argument similar to that in \cite{Puresubrings} with the description of multiplier ideals as the kernel of a map between local cohomology modules in \cite{interpret. multiplier}.
 As an application of Theorem \ref{introduction main theorem}, we show the next result about a behavior of multiplier ideals under pure ring extensions, which is a generalization of \cite[Corollary 5.30]{TY}.
\begin{thm}[Corollary \ref{Main Thm 2}] \label{introduction main theorem 2}
 	Let $R\hookrightarrow S$ be a pure local homomorphism of normal local domains essentially of finite type over $\C$. Suppose that $R$ is $\Q$-Gorenstein. Let $\Delta_S$ be an effective $\Q$-Weil divisor such that $K_S+\Delta_S$ is $\Q$-Cartier, where $K_S$ is a canonical divisor on $\Spec S$. Let $\ba\subseteq R$ be a nonzero ideal and $t>0$ a positive rational number. Then we have
 	\[
 	\J(S,\Delta_S,(\ba S)^t)\cap R\subseteq \J(R,\ba^t).
 	\]
 \end{thm}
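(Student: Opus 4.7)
The plan is to reduce the containment to its $\m$-adic completion, use Theorem~\ref{introduction main theorem} to translate both sides into BCM test ideals, and then execute a pure-descent argument on the BCM side.

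First, by faithful flatness of the completion maps $R\to\widehat R$ and $S\to\widehat S$ together with the standard flat base-change property of multiplier ideals, one has
\[
 \J(R,\ba^t)\widehat R=\J(\widehat R,\widehat\ba^t), \qquad \J(S,\Delta_S,(\ba S)^t)\widehat S=\J(\widehat S,\widehat{\Delta_S},(\ba\widehat S)^t),
\]
and the contraction of an ideal from its completion recovers the original ideal. Together with the fact that the purity of $R\hookrightarrow S$ implies purity of $\widehat R\hookrightarrow \widehat S$ (using excellence of essentially-of-finite-type $\C$-algebras), this reduces the problem to the completed analogue of the desired containment.

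Next, I would invoke Theorem~\ref{introduction main theorem}, in its natural extension that incorporates an auxiliary ideal $\ba^t$ (which follows by the same method as the displayed case), to rewrite both sides as BCM test ideals. The problem becomes
\[
 \tau_{\widehat{\B(S)}}\bigl(\widehat S,\widehat{\Delta_S},(\ba\widehat S)^t\bigr)\cap \widehat R\subseteq \tau_{\widehat{\B(R)}}(\widehat R,\widehat\ba^t).
\]
The ultraproduct construction of $\B(\cdot)$ is functorial under extensions of local domains essentially of finite type over $\C$, so the inclusion $R\hookrightarrow S$ induces an $R$-algebra map $\B(R)\to\B(S)$, and its completion makes $\widehat{\B(S)}$ into a BCM $\widehat R$-algebra sitting over $\widehat{\B(R)}$. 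I would then run the pure-descent argument in two steps, in the spirit of \cite{Puresubrings}: first, the purity of $\widehat R\hookrightarrow \widehat S$ combined with the $\Q$-Gorenstein hypothesis on $R$ yields
\[
 \tau_{\widehat{\B(S)}}\bigl(\widehat S,\widehat{\Delta_S},(\ba\widehat S)^t\bigr)\cap \widehat R\subseteq \tau_{\widehat{\B(S)}}(\widehat R,\widehat\ba^t);
\]
second, the $\widehat R$-algebra map $\widehat{\B(R)}\to \widehat{\B(S)}$ gives
\[
 \tau_{\widehat{\B(S)}}(\widehat R,\widehat\ba^t)\subseteq \tau_{\widehat{\B(R)}}(\widehat R,\widehat\ba^t),
\]
since enlarging a BCM algebra via an $R$-algebra map can only shrink the associated test ideal.

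The main obstacle I anticipate is the first of these two BCM containments. One must transport an annihilation statement for a local-cohomology class twisted by $K_{\widehat S}+\widehat{\Delta_S}$ across the map on local cohomology induced by the purity of $\widehat R\hookrightarrow \widehat S$, while on the $\widehat R$-side there is no prescribed boundary divisor. This is exactly where the $\Q$-Gorenstein hypothesis on $R$ enters: it allows one to trivialize the canonical module of $\widehat R$ up to a uniform twist and pull back the divisor-twisted annihilation statement over $\widehat S$ to an untwisted one over $\widehat R$. Once this step is verified, the two displayed containments compose with the reductions of the first two steps to yield the theorem.
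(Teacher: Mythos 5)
Your high-level shape (complete, translate to BCM test ideals via Theorem \ref{Main Theorem}, descend along purity) matches the paper's, but the step you yourself flag as "the main obstacle" is precisely where the paper's actual mechanism lives, and your sketch does not supply it. The difficulty is that $\tau_{\widehat{\B(S)}}(\widehat S,\widehat{\Delta_S},(\ba\widehat S)^t)$ is defined through $H_{\m_S}^{\dim S}(\omega_{\widehat S})$ (equivalently $E_{\widehat S}$) while the target ideal lives in $\Ann_{\widehat R}$ of a submodule of $H_{\m}^{\dim R}(\omega_{\widehat R})$; these local cohomology modules have different dimensions and different twists, and purity of $\widehat R\hookrightarrow\widehat S$ gives no map between them in a useful direction. "Trivializing the canonical module up to a uniform twist" does not bridge this. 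The paper's solution (Section 7, following P\'erez--R.G.) is to re-express both sides as generalized module closures and use the characterization
\[
\J(\widehat R,\widehat{\Delta_R},(\ba\widehat R)^t)=\bigcap_{M}\Ann_{\widehat R}\,0_{M}^{\cl_{\widehat{\B_{\Delta_R}}},\ba^t},
\]
the intersection running over \emph{all} $\widehat R$-modules $M$ (Propositions \ref{Prop test ideal of module closure}, \ref{BCM closure and module closure} and the subsequent identification with the multiplier ideal). With that in hand the descent is an elementary diagram chase: purity gives $M\hookrightarrow\widehat S\otimes_{\widehat R}M$, hence $0_M^{\cl_{\widehat{\B_{\Delta_R}}},\ba^t}\subseteq 0_{\widehat S\otimes_{\widehat R}M}^{\cl_{\widehat{\B_{\Delta_S}}},\ba^t}$, and intersecting over all $M$ (a subfamily of all $\widehat S$-modules) yields the containment. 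Without this "intersection over all modules" characterization your first displayed containment has no proof.

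Two further points. First, your second containment presupposes that $\widehat{\B(S)}$ is a balanced big Cohen--Macaulay $\widehat R$-algebra, so that $\tau_{\widehat{\B(S)}}(\widehat R,\widehat\ba^t)$ is even defined; this is not automatic for a pure local inclusion (a system of parameters of $R$ need not extend to part of one of $S$), and the paper's argument deliberately avoids ever needing $\B(S)$ to be Cohen--Macaulay over $R$. Second, the "natural extension" of Theorem \ref{Main Theorem} to pairs $(\widehat R,\widehat\Delta,(\ba\widehat R)^t)$ is not obtained by rerunning the proof: the paper reduces to the principal case $g^{1/n}$ for $g\in\ba^{\lceil nt\rceil}$ (absorbing $\tfrac1n\Div g$ into the boundary) and then invokes the summation formula $\J(R,\Delta,(\ba_1+\dots+\ba_n)^t)=\sum\J(R,\Delta,\ba_1^{\lambda_1}\cdots\ba_n^{\lambda_n})$ to recover the ideal version; you should not assert the ideal version as a routine variant.
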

 In \cite{TY}, we defined ultra-test ideals, a variant of test ideals in equal characteristic zero, to generalize the notion of ultra-$F$-regularity introduced by Schoutens \cite{log-terminal}. Theorem \ref{introduction main theorem 2} was proved by using ultra-test ideals under the assumption that $\ba$ is a principal ideal. The description of multiplier ideals as BCM test ideals associated to $\B(R)$ (Theorem \ref{introduction main theorem}) and a generalization of module closures in \cite{PRG21} enables us to show Theorem \ref{introduction main theorem 2} without any assumptions.
 
 As another application of Theorem \ref{introduction main theorem}, we give an affirmative answer to one of the conjectures proposed by Schoutens \cite[Remark 3.10]{log-terminal}, which says that $\B$-regularity is equivalent to being log-terminal singularities (see Theorem \ref{partial answer to Schoutens}).
 
 This paper is organized as follows:
 in the preliminary section, we give definitions of multiplier ideals, test ideals and BCM test ideals. In Section 3, we quickly review the theory of ultraproducts in commutative algebra including non-standard and relative hulls. In Section 4, we prove some fundamental results on big Cohen-Macaulay algebras constructed via ultraproducts following \cite{canonical BCM}. In Section 5, we review the relation between approximations and reductions modulo $p\gg0$ and consider approximations of multiplier ideals. In Section 6, we show Theorem \ref{introduction main theorem}, the main theorem of this paper.
  In Section 7, using a generalized module closure, we show Theorem \ref{introduction main theorem 2} as an application of Theorem \ref{introduction main theorem}. In Section 8, we show that $\B$-regularity is equivalent to log-terminal singularities.
 Finally in Section 9, we discuss a question, a variant of \cite[Question 2.7]{seed}, to handle big Cohen-Macaulay algebras that cannot be constructed via ultraproducts, and consider the equivalence of BCM-rationality and being rational singularities.
\begin{small}
\begin{acknowledgement}
The author wishes to express his gratitude to his supervisor Professor Shunsuke Takagi for his valuable advice and suggestions. The author would also like to thank Hans Schoutens for helpful comments on this paper. He also thanks to the referee who  provided useful comments and suggestions. The author was supported by JSPS KAKENHI Grant Number JP22J13150.
\end{acknowledgement}
\end{small}
\section{Preliminaries}
 Through out this paper, all rings will be commutative with unity.
\subsection{Multiplier ideals}
Here, we briefly review the definition of multiplier ideals and refer the reader to \cite{La},\cite{ST21} for more details.
Throughout this subsection, we assume that $X$ is a normal integral scheme essentially of finite type over a field of characteristic zero or $X=\Spec \widehat{R}$, where $(R,\m)$ is a normal local domain essentially of finite type over a field of characteristic zero and $\widehat{R}$ is its $\m$-adic completion.
\begin{defn}
	A proper birational morphism $f:Y\to X$ between  integral schemes is said to be a {\it resolution of singularities} of $X$ if $Y$ is regular. When $\Delta$ is a $\Q$-Weil divisor on $X$ and $\ba \subseteq \sO_X$ is a nonzero coherent ideal sheaf, a resolution $f:Y \to X$ is said to be a {\it log resolution} of $(X,\Delta,\ba)$ if $\ba\sO_Y=\sO_Y(-F)$ is invertible and if the union of the exceptional locus $\Exc (f)$ of $f$ and the support $F$ and the strict transform $f_*^{-1}\Delta$ of $\Delta$ is a simple normal crossing divisor.
\end{defn}
If $f:Y\to X$ is a proper birational morphism with $Y$ a normal integral scheme and $\Delta$ is a $\Q$-Weil divisor, then we can choose $K_Y$ such that $f^*(K_X+\Delta)-K_Y$ is a divisor supported on the exceptional locus of $f$. With this convention:
\begin{defn}
	Let $\Delta\ge 0$ be an effective $\Q$-Weil divisor on $X$ such that $K_X+\Delta$ is $\Q$-Cartier, $\ba\subseteq \sO_X$ a nonzero coherent ideal sheaf and $t>0$ a positive real number.
	Then the {\it multiplier ideal sheaf} $\J(X,\Delta,\ba^t)$ associated to $(X,\Delta,\ba^t)$ is defined by
	\[
		\J (X,\Delta,\ba^t)=f_*\sO_Y(K_Y-\lfloor f^*(K_X+\Delta)+tF\rfloor).
	\]
	where $f:Y\to X$ is a log resolution of $(X,\Delta,\ba)$. Note that this definition is independent of the choice of log resolution.
\end{defn}
 \begin{defn}
 	Let $X$ be a normal integral scheme essentially of finite type over a field of characteristic zero. We say that {\it $X$ has rational singularities} if $X$ is Cohen-Macaulay at $x$ and if for any projective birational morphism $f:Y\to \Spec \sO_{X,x}$ with $Y$ a normal integral scheme, the natural morphism $f_*\omega_Y\to\omega_{X,x}$ is an isomorphism. 
 \end{defn}
\subsection{Tight closure and Test ideals}
In this subsection, we quickly review the basic notion of tight closure and test ideals. We refer the reader to \cite{BSTZ}, \cite{HH}, \cite{HY}, \cite{interpret. multiplier}.
\begin{defn}
	Let $R$ be a normal domain of characteristic $p>0$, $\Delta\ge 0$ an effective $\Q$-Weil divisor, $\ba\subseteq R$ a nonzero ideal and $t>0$ a real number. Let $E=\bigoplus E(R/\m)$ be the direct sum, taken over all maximal ideals $\m$ of $R$, of the injective hulls $E_R(R/\m)$ of the residue  fields $R/\m$. 
	\begin{enumerate}
		\item Let $I$ be an ideal of $R$. The {\it $(\Delta,\ba^t)$-tight closure $I^{*\Delta,\ba^t}$ of $I$} is defined as follows:
		$x\in I^{*\Delta,\ba^t}$ if and only if there exists a nonzero element $c\in R^{\circ}$ such that
		\[
		c\ba^{\lceil t(q-1)\rceil}x^q\subseteq I^{[q]}R(\lceil (q-1)\Delta\rceil)
		\]
		for all large $q=p^e$, where $I^{[q]}=\{f^q|f\in I\}$ and $R^{\circ}=R\setminus\{0\}$.
		\item If $M$ is an $R$-module, then the {\it $(\Delta, \ba^t)$-tight closure $0_M^{*\Delta,\ba^t}$} is defined as follows:
		$z\in 0_M^{*\Delta, \ba^t}$ if and only if there exists a nonzero element $c\in R^{\circ}$ such that
		\begin{equation*}
			(c\ba^{\lceil t(q-1)\rceil})^{1/q}\otimes z=0 \quad \text{in} \quad R(\lceil (q-1)\Delta\rceil)^{1/q}\otimes_R M
		\end{equation*}
		for all large $q=p^e$.
		\item The {\it (big) test ideal $\tau (R,\Delta,\ba^t)$ associated to $(R,\Delta,\ba^t)$} is defined by
		\[
		\tau (R,\Delta,\ba^t)=\Ann_R(0_E^{*\Delta,\ba^t}).
		\]
		When $\ba=R$, then we simply denote the ideal $\tau(R,\Delta)$. We call the triple $(R,\Delta,\ba^t)$ is {\it strongly $F$-regular} if $\tau(R,\Delta,\ba^t)=R$. 
	\end{enumerate}
\end{defn}
\begin{defn}[{\cite{FW}}]
	Let $R$ be an $F$-finite Noetherian local domain of characteristic $p>0$ of dimension $d$. We say that $R$ is {\it $F$-rational} if any ideal $I=(x_1,\dots,x_d)$ generated by a system of parameters satisfies $I=I^*$.
\end{defn}


\subsection{Big Cohen-Macaulay algebras}
 In this subsection, we will briefly review the theory of big Cohen-Macaulay algebras.
 Throughout this subsection, we assume that local rings $(R,\m)$ are Noetherian. 
 \begin{defn}
 	Let $(R,\m)$ be a local ring, and let $\x = x_1,\dots, x_n$ be a system of parameters. $R$-algebra $B$ is said to be {\it big Cohen-Macaulay with respect to $\x$} if $\x$ is a regular sequence on $B$. $B$ is called a {\it (balanced) big Cohen-Macaulay algebra} if it is big Cohen-Macaulay with respect to $\x$ for every system of parameters $\x$.
\end{defn}
\begin{rem}[{\cite[Corollary 8.5.3]{BH93}}]
	If $B$ is big Cohen-Macaulay with respect to $\x$, then the $\m$-adic completion $\widehat{B}$ is (balanced) big Cohen-Macaulay.
\end{rem}
 About the existence of big Cohen-Macaulay algebras of residue characteristic $p>0$, the following are proved in \cite{Bha20}, \cite{HH92}.
\begin{thm}\label{absolute integral closure p>0}
	If $(R,\m)$ is an excellent local domain of residue characteristic $p>0$, then the $p$-adic completion of absolute integral closure $R^+$ is a (balanced) big Cohen-Macaulay $R$-algebra.
\end{thm}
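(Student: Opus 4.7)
The plan is to split the argument according to whether $R$ has equal or mixed residue characteristic, since the two cases rest on essentially disjoint toolkits. In each case, the ultimate goal is to show that for every system of parameters $\x = x_1,\dots,x_n$ of $R$, the Koszul cohomology $\H^i_{\x}(\widehat{R^+})$ vanishes for $i < n$; this is equivalent to $\x$ being a regular sequence on $\widehat{R^+}$, and balancedness is then automatic because the property holds simultaneously for every system of parameters.

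In equal characteristic $p$, I would prove the stronger statement that $R^+$ itself is already big Cohen-Macaulay, following Hochster-Huneke \cite{HH92}; the conclusion for the $p$-adic completion then follows from the preceding Remark. The key mechanism is that $R^+$ admits $p^e$-th roots of every element. Given a putative bad relation $y x_{k+1} \in (x_1,\dots,x_k)R^+$, colon-capturing for tight closure (where excellence of $R$ is used) furnishes $c \in R^\circ$ with $c\,y^{p^e} \in (x_1^{p^e},\dots,x_k^{p^e})R^+$ for all $e \gg 0$. Hochster-Huneke then show, via a delicate argument with module-finite extensions and Frobenius powers inside $R^+$, that this tight closure relation upgrades to the honest containment $y \in (x_1,\dots,x_k)R^+$ (equivalently, tight closure equals plus closure on parameter ideals in characteristic $p$).

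For mixed characteristic there is no Frobenius on $R^+$, and the $p$-adic completion in the statement becomes indispensable. The approach, due to Bhatt \cite{Bha20}, replaces the Frobenius machinery by almost mathematics over a perfectoid $R$-algebra $A$ covering $R^+$. Concretely, one establishes an almost-vanishing theorem for $\H^i_{\m}(A/p)$ in degrees $i < \dim R$, using Andr\'{e}'s flatness lemma together with perfectoid/prismatic inputs; transfers it along the faithfully flat map $R^+ \to A$; and bootstraps almost vanishing to genuine vanishing of $\H^i_{\x}(\widehat{R^+})$ by passing to the derived inverse limit $\widehat{R^+} = \varprojlim R^+/p^n$ and running a Mittag-Leffler argument on Koszul complexes. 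This last step is precisely where the passage to the $p$-adic completion is used non-cosmetically.

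The main obstacle is unambiguously the mixed characteristic step: Bhatt's perfectoid and almost-mathematical input has no elementary substitute, and it is this input that forces the $p$-adic completion into the statement in the first place. By contrast, the reduction of the big Cohen-Macaulay property to Koszul vanishing, and the passage from $R^+$ to $\widehat{R^+}$ in equal characteristic, are entirely formal once the underlying hard theorems (respectively \cite{HH92} and \cite{Bha20}) are in hand.
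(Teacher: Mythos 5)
The paper does not prove this statement itself but quotes it as a known result of \cite{HH92} (equal characteristic $p$, where $R^+$ is already balanced big Cohen--Macaulay and the $p$-adic completion is $R^+$ itself) and \cite{Bha20} (mixed characteristic, via perfectoid/almost-mathematics methods), and your outline correctly reproduces exactly this two-case division with the right attributions and the right role for the $p$-adic completion. Your sketch is consistent with the paper's treatment.
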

Using big Cohen-Macaulay algebras, we can define a class of singularities.
\begin{defn}
	If $R$ is an excellent local ring of dimension $d$ and let $B$ be a big Cohen-Macaulay $R$-algebra. We say that $R$ is {\it big Cohen-Macaulay-rational with respect to $B$} (or simply $\text{BCM}_B$-rational) if $R$ is Cohen-Macaulay and if $H_\m^d(R)\to H_\m^d(B)$ is injective. We say that $R$ is {\it BCM-rational} if $R$ is $\text{BCM}_B$-rational for any big Cohen-Macaulay algebra $B$.
\end{defn}
 We explain BCM test ideals introduced in \cite{Ma-Schwede}.
 \begin{setting} \label{Setting} Let $(R,\m)$ be a normal local domain of dimension d.
 	\begin{enumerate}[(i)]
 		\item $\Delta \ge 0$ is a $\Q$-Weil divisor on $\Spec R$ such that $K_R+\Delta$ is $\Q$-Cartier.
 		\item Fixing $\Delta$, we also fix an embedding $R\subseteq \omega_R \subseteq \operatorname{Frac} R$, where $\omega_R$ is the canonical module.
 		\item Since $K_R + \Delta$ is effective and $\Q$-Cartier, there exist an integer $n>0$ and $f\in R$ such that $n(K_R+\Delta)=\Div (f)$.
 	\end{enumerate}
 \end{setting}
 \begin{defn}\label{def of BCM test ideal}
 	With notation as in Setting \ref{Setting}, if $B$ is a big Cohen-Macaulay $R[f^{1/n}]$-algebra, then we define $0_{H_\m^d(\omega_R)}^{B,K_R+\Delta}$ to be $\Ker \psi $, where $\psi$ is the homomorphism determined by the below commutative diagram:
 	\[
 	\xymatrix{
 		H_\m^d(R) \ar[r] \ar[d] & H_\m^d(B) \ar[r]^{\cdot f^{1/n}} \ar[d] & H_\m^d(B) \\
 		H_\m^d(\omega_R) \ar[r] \ar@/_45pt/[rru]_{\psi} & H_\m^d(B\otimes_R \omega_R) \ar[ru]
 	}.
 	\]
 	If $R$ is $\m$-adically complete, then we define 
 	\[\tau_B(R, \Delta)=\Ann_{R} 0_{H_\m^d(\omega_R)}^{B,K_R+\Delta}.
 	\]
 	We call {\it $\tau_B(R,\Delta)$ the BCM test ideal of $(R,\Delta)$ with respect to $B$}. We say that {\it $(R,\Delta)$ is big Cohen-Macaulay regular with respect to $B$} (or simply $\text{BCM}_{B}$ regular) if $\tau_B(R,\Delta)=R$.
 \end{defn}
\begin{prop}[{\cite{Ma-Schwede}}]\label{BCM test tight closure}
	Let $(R,\m)$ be a complete normal local domain of characteristic $p>0$, $\Delta\ge 0$ an effective $\Q$-Weil divisor on $\Spec R$ and $B$ a big Cohen-Macaulay $R^+$-algebra. Fix an effective canonical divisor $K_R\ge 0$. Suppose that $K_R+\Delta$ is $\Q$-Cartier. Then
	\[
	\tau_B(R,\Delta)=\tau(R,\Delta).
	\]
\end{prop}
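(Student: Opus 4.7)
The plan is to show that the two characterizations of the test ideal determine the same submodule of $H_\m^d(\omega_R)$, so that equality of the ideals follows by taking $\Ann_R$. By definition $\tau_B(R,\Delta) = \Ann_R \ker \psi$, where $\psi:H_\m^d(\omega_R)\to H_\m^d(B)$ is as in Definition \ref{def of BCM test ideal}. On the tight-closure side, a standard Matlis/local-duality reformulation on the complete local ring $R$ rewrites $\tau(R,\Delta) = \Ann_R 0_{H_\m^d(\omega_R)}^{*\Delta}$, where $0_{H_\m^d(\omega_R)}^{*\Delta}$ denotes the divisorial tight closure of $0$ in the top local cohomology (computed via the twisted Frobenius structure encoded by $K_R+\Delta$). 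Thus it suffices to prove
\[
    \ker \psi \;=\; 0_{H_\m^d(\omega_R)}^{*\Delta}.
\]

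The inclusion $0_{H_\m^d(\omega_R)}^{*\Delta} \subseteq \ker \psi$ is the Smith direction. By Smith's identification of tight closure with plus closure in top local cohomology, together with its extension to the pair $(R,\Delta)$ via the cyclic cover of $R$ associated to $K_R + \Delta$ (which exists because $n(K_R+\Delta) = \Div(f)$), one identifies $0_{H_\m^d(\omega_R)}^{*\Delta}$ with the kernel of the analogous map $H_\m^d(\omega_R) \to H_\m^d(R^+)$ built from multiplication by $f^{1/n}$. Since $B$ is an $R^+$-algebra, this kernel is automatically contained in $\ker\psi$ via the factorization $R^+ \to B$.

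The reverse inclusion $\ker \psi \subseteq 0_{H_\m^d(\omega_R)}^{*\Delta}$ encodes the principle that BCM closure is always contained in tight closure. Concretely, I would use the Hochster--Huneke machinery to produce a nonzero test element $c \in R$ for $(R,\Delta)$; then, using that $B$ is big Cohen-Macaulay, I would show that any $\eta \in \ker\psi$ satisfies the required divisorial tight-closure relation witnessed by $c$. The key step is reading off, from the vanishing of $\psi(\eta)$ in $H_\m^d(B)$, the vanishing of $c^{1/q}\otimes\eta$ in the appropriate twisted Frobenius pushforward for all $q \gg 0$; big Cohen-Macaulayness of $B$ allows us to compute its local cohomology as an honest direct limit and propagate this vanishing back to $H_\m^d(\omega_R)$.

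The hard part will be the divisorial extension of Smith's theorem: the rounding $\lceil(q-1)\Delta\rceil$ appearing in the definition of tight closure must be carefully matched with the integral twist by $f^{1/n}$ used on the BCM side. The cyclic cover reduction is the standard device to achieve this matching; once it is in place, everything else in the argument is a diagram chase of the maps appearing in Definition \ref{def of BCM test ideal}, together with the Hochster--Huneke theory of test elements.
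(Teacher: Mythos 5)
The paper does not prove this proposition at all: it is quoted verbatim from Ma--Schwede with a citation and no argument, so there is no in-paper proof to compare against; the relevant benchmark is Ma--Schwede's own proof. Your overall architecture matches theirs: reduce to showing $\ker\psi=0^{*\Delta}_{H^d_\m(\omega_R)}$ inside $H^d_\m(\omega_R)\cong E$ (the Matlis-duality identification is fine for a complete normal local domain, since $\Hom_R(\omega_R,\omega_R)\cong R$), and prove the inclusion $0^{*\Delta}_{H^d_\m(\omega_R)}\subseteq\ker\psi$ by the generalization of Smith's tight-closure-equals-plus-closure theorem to the pair $(R,\Delta)$ via the cyclic cover $R[f^{1/n}]\subseteq R^+$, using that $B$ is an $R^+$-algebra. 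That half of your plan is essentially the standard argument.

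The reverse inclusion $\ker\psi\subseteq 0^{*\Delta}_{H^d_\m(\omega_R)}$ is where your sketch has a genuine gap. The mechanism is not ``test elements in $R$ plus computing $H^d_\m(B)$ as a direct limit'': writing $H^d_\m(B)=\varinjlim_S H^d_\m(S)$ over finitely generated $R$-subalgebras only tells you that the vanishing of $f^{1/n}\eta$ is witnessed in some finitely generated $S\subseteq B$, and nothing in the big Cohen--Macaulay hypothesis by itself lets you ``propagate this vanishing back to $H^d_\m(\omega_R)$.'' The missing ingredient is \emph{solidity}: because $B$ is big Cohen--Macaulay over the complete local domain $R$, one has $H^d_\m(B)\neq 0$, hence $H^d_\m(S)\neq 0$ for the subalgebra $S$ above, and Matlis/local duality over the complete ring produces a nonzero $R$-linear map $S\to R$ (say $1\mapsto c\neq 0$). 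One then applies the $e$-th Frobenius to the finite certificate of the relation $f^{1/n}\eta=0$ in $S$ and pushes the resulting relations down along this map to obtain $c\,f^{p^e/n}\eta^{p^e}$-type relations in $R$, which (after the cyclic-cover bookkeeping matching $f^{p^e/n}$ with $R(\lceil(p^e-1)\Delta\rceil)$) is exactly the divisorial tight-closure condition with multiplier $c$. In characteristic $p$ this is Hochster's ``solid closure is contained in tight closure'' argument, and it is the step your outline needs to make explicit; as written, the passage from vanishing in $H^d_\m(B)$ to a tight-closure relation in $R$ is asserted rather than proved.
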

\section{ultraproducts}
\subsection{Basic notions}
 In this subsection, we quickly review basic notions from the theory of ultraproduct. The reader is referred to \cite{affine}, \cite{use of ultraproducts} for details.
We fix an infinite set $W$. We use $\mathcal{P}(W)$ to denote the power set of $W$.
\begin{defn}	A nonempty subset $\mathcal{F} \subseteq \mathcal{P}(W)$ is called a {\it filter} if the following two conditions hold.
	\begin{enumerate}[(i)]
	\item If $A, B \in \mathcal{F}$, then $A \cap B\in \mathcal{F}$.
	\item If $A \in \mathcal{F}$ and $A \subseteq B \subseteq W$, then $B \in \mathcal{F}$.
	\end{enumerate}
\end{defn}
\begin{defn}
	Let $\mathcal{F}$ be a filter on $W$.
	\begin{enumerate}
	\item $\mathcal{F}$ is called an {\it ultrafilter} if for all $A \in \mathcal{P}(W)$, we have $A \in \mathcal{F}$ or $A^c \in \mathcal{F}$, where $A^c$ is the complement of $A$.
	\item $\mathcal{F}$ is called {\it principal} if there exists a finite subset $A\subseteq W$ such that $A \in \mathcal{F}$.
	\end{enumerate}
\end{defn}
\begin{rem}
	By Zorn's lemma, non-principal ultrafilters always exist.
\end{rem}
\begin{rem}
	Ultrafilters are an equivalent notion to two-valued finitely additive measures. If we have an ultrafilter $\mathcal{F}$ on $W$, then
	\[
	m(A):=
	\left\{
	\begin{array}{ll}
		1 & (A\in \mathcal{F}) \\
		0 & (A \notin \mathcal{F})
	\end{array}
	\right.
	\]
	is a two-valued finitely additive measure. Conversely, if $m:\mathcal{P}(W)\to \{0,1\}$ is a nonzero finitely additive measure, then $\mathcal{F}:=\{A\subseteq W|m(A)=1\}$ is an ultrafilter. Here $\mathcal{F}$ is principal if and only if there exists an element $w_0$ of $W$ such that $m(\{w_0\})=1$. Hence, $\mathcal{F}$ is not principal if and only if $m(A)=0$ for any finite subset $A$ of $W$.
\end{rem}
\begin{defn}
	Let $A_w$ be a family of sets indexed by $W$ and $\mathcal{F}$ be an ultrafilter on $W$. Suppose that $a_w\in A_w$ for all $w\in W$ and $\varphi$ is a predicate. We say $\phi(a_w)$ holds {\it for almost all $w$} if $\{w\in W|\varphi(a_w) \text{ holds}\}\in \mathcal{F}$.
\end{defn}
\begin{rem}
	This is an analog of ``almost everywhere'' or ``almost surely'' in analysis. The difference is that $m$ is not countably but finitely additive. We can consider elements in $\mathcal{F}$ as ``large'' sets and elements in the complement $\mathcal{F}^c$ as ``small'' sets. If $\mathcal{F}$ is not principal, all finite subsets of $W$ are ``small''.
\end{rem}
\begin{defn}
	Let $A_w$ be a family of sets indexed by $W$ and $\mathcal{F}$ be a non-principal ultrafilter on $W$. The {\it ultraproduct of $A_w$} is defined by 
	\begin{equation*}
		\ulim_w A_w = A_{\infty} := \prod_w A_w/\sim,
	\end{equation*}
	where $(a_w)\sim (b_w)$ if and only if $\{w\in W|a_w=b_w\}\in \mathcal{F}$.
 	We denote the equivalence class of $(a_w)$ by $\ulim_w a_w$.
 \end{defn}
\begin{rem}[{\cite[Section 3]{Lyu}}]
	If $A_w$ are local rings, then the ultraproduct is equivalent to the localization of $\prod A_w$ at a maximal ideal.
\end{rem}
\begin{eg}
	We use $^*\N$ and $^*\R$ to denote the ultraproduct of $|W|$ copies of $\N$ and $\R$ respectively. $^*\N$ is a semiring and $^*\R$ is a field, see Definition-Proposition \ref{ultraprocut of maps}, Theorem \ref{Los's theorem}. $^*\N$ is a non-standard model of Peano arithmetic. $^*\R$ is a system of hyperreal numbers used in non-standard analysis.
\end{eg}
 \begin{defn-prop}\label{ultraprocut of maps}
 	Let $A_{1w},\dots, A_{nw}$, $B_w$ be families of sets indexed by $W$ and $\mathcal{F}$ be a non-principal ultrafilter. Suppose that $f_w:A_{1w}\times \dots \times A_{nw}\to B_w$ is a family of maps. Then we define the {\it ultraproduct $f_\infty = \ulim_w  f_w : A_{1\infty}\times\dots \times A_{n\infty}\to B_\infty$ of $f_w$} by
 	\[
 	f_\infty(\ulim_w a_{1w},\dots, \ulim_w a_{nw}):=\ulim_w f_w(a_{1w},\dots,a_{nw}).
 	\]
 	This is well-defined.
 \end{defn-prop}
\begin{cor}
	Let $A_w$ be a family of rings. Suppose that $B_w$ is an  $A_w$-algebra and $M_w$ is an $A_w$-module for almost all $w$. Then the following hold:
	\begin{enumerate}
		\item $A_\infty$ is a ring.
		\item $B_\infty$ is an $A_\infty$-algebra.
		\item $M_\infty$ is an $A_\infty$-module.
	\end{enumerate} 
\end{cor}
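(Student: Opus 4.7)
The plan is a direct and systematic application of Definition-Proposition \ref{ultraprocut of maps}. For (1), each ring $A_w$ comes equipped with structural maps: addition $+_w \colon A_w \times A_w \to A_w$, multiplication $\cdot_w \colon A_w \times A_w \to A_w$, additive inversion $-_w \colon A_w \to A_w$, together with distinguished elements $0_w, 1_w \in A_w$ (viewed as constant maps from a one-point set). Taking ultraproducts of these data yields binary operations $+_\infty, \cdot_\infty$ and a unary operation $-_\infty$ on $A_\infty$, along with elements $0_\infty, 1_\infty \in A_\infty$. To verify a ring axiom such as associativity, I would choose representatives $(a_w), (b_w), (c_w)$ for arbitrary $a, b, c \in A_\infty$; since $(a_w +_w b_w) +_w c_w = a_w +_w (b_w +_w c_w)$ holds in every $A_w$, the well-definedness clause of Definition-Proposition \ref{ultraprocut of maps} gives the same equality in $A_\infty$. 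Every other ring axiom is checked in the same manner, reducing a universal equational identity in $A_\infty$ to one holding in all factors.

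For (2), let $T := \{w \in W \mid B_w \text{ is an } A_w\text{-algebra}\} \in \mathcal{F}$, and on $T$ denote the structure map by $\iota_w \colon A_w \to B_w$. Since the ultraproduct is insensitive to changes on $W \setminus T$, I may replace $B_w$ by the zero ring and $\iota_w$ by the zero map for $w \notin T$ without altering $B_\infty$ or $A_\infty$. The ultraproduct $\iota_\infty := \ulim_w \iota_w \colon A_\infty \to B_\infty$ is then a ring homomorphism by the same pointwise-to-ultraproduct argument as in (1), applied to the identities $\iota_w(a +_w b) = \iota_w(a) +_w \iota_w(b)$, $\iota_w(a \cdot_w b) = \iota_w(a) \cdot_w \iota_w(b)$, and $\iota_w(1_w) = 1_w$, each of which holds for all $w \in T$. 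Hence $B_\infty$ becomes an $A_\infty$-algebra via $\iota_\infty$.

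For (3), the same idea applies to the scalar multiplication maps $\mu_w \colon A_w \times M_w \to M_w$, again defined on a set $T' \in \mathcal{F}$ and extended to the zero map on the complement; together with the abelian group structure on $M_\infty$ obtained exactly as in (1), the ultraproduct $\mu_\infty \colon A_\infty \times M_\infty \to M_\infty$ endows $M_\infty$ with an $A_\infty$-action, and each module axiom reduces to a pointwise identity holding on a set in $\mathcal{F}$. The only bookkeeping point — hardly an obstacle — is that the algebra and module structures are specified only for almost all $w$, which is handled by the standard observation that ultraproducts depend only on sets in the filter, so any convenient extension of the structure maps to $W$ is harmless. A more conceptual proof would invoke \L{}o\'{s}'s theorem (cited as Theorem \ref{Los's theorem} in the example above), but this is not needed since the relevant axioms are purely equational.
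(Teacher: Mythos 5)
Your proposal is correct and follows essentially the same route as the paper: both define the operations as ultraproducts of the structure maps via Definition-Proposition \ref{ultraprocut of maps} and reduce each (equational) axiom to its pointwise validity in almost all factors. Your extra care in extending the structure maps from a filter set to all of $W$ is a minor bookkeeping refinement the paper leaves implicit, not a different argument.
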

\begin{proof}
	Let $0:=\ulim_w 0$, $1:=\ulim_w 1$ in $A_\infty$, $B_\infty$ and $0:=\ulim_w 0$ in $M_\infty$. By the above Definition-Proposition, $A_\infty$, $B_\infty$ have natural additions, subtractions and multiplications and we have a natural ring homomorphism $A_\infty \to B_\infty$. Similarly, $M_\infty$ has a natural addition and a scalar multiplication between elements of $M_\infty$ and $A_\infty$.
\end{proof}
\begin{prop}
	Suppose that, for almost all $w$, we have an exact sequence
	\[
	0\to L_w\to M_w \to N_w\to 0
	\]
	of abelian groups. Then
	\[
	0\to \ulim_w L_w \to \ulim_w M_w \to \ulim_w N_w \to 0
	\]
	is an exact sequence of abelian groups. In particular, $\ulim_w:\prod_w\operatorname{Ab}\to \operatorname{Ab}$ is an exact functor.
\end{prop}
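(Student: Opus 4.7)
The plan is to verify exactness of the ultraproduct sequence at each of the three positions, exploiting the fact that an ultrafilter $\mathcal{F}$ is closed under finite intersection---equivalently, that if each of two properties $P$ and $Q$ holds for almost all $w$, then $P$ and $Q$ hold simultaneously for almost all $w$. By shrinking $W$ to a member of $\mathcal{F}$ if necessary, I may assume that the short exact sequence $0 \to L_w \to M_w \to N_w \to 0$, with structure maps $f_w$ and $g_w$, is defined for every $w \in W$.

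For injectivity of $\ulim_w f_w$, I take an element $\ulim_w \ell_w \in \ulim_w L_w$ mapping to zero in $\ulim_w M_w$. By definition, $\{w : f_w(\ell_w) = 0\}$ lies in $\mathcal{F}$; intersecting with the almost-all set where $f_w$ is injective yields $\{w : \ell_w = 0\} \in \mathcal{F}$, so $\ulim_w \ell_w = 0$. The inclusion of the image in the kernel at the middle term is immediate from Definition-Proposition \ref{ultraprocut of maps}, because $\ulim_w g_w \circ \ulim_w f_w = \ulim_w (g_w \circ f_w) = 0$. For the reverse inclusion, I take $\ulim_w m_w$ with $g_w(m_w) = 0$ on a set in $\mathcal{F}$; pointwise exactness supplies $\ell_w \in L_w$ with $f_w(\ell_w) = m_w$ on that set, extended arbitrarily on the complement, and then $\ulim_w f_w(\ulim_w \ell_w) = \ulim_w m_w$ since the two families agree on the chosen large set.

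For surjectivity of $\ulim_w g_w$, given $\ulim_w n_w \in \ulim_w N_w$, I invoke pointwise surjectivity of $g_w$ to select $m_w \in M_w$ with $g_w(m_w) = n_w$ for each $w$; then $\ulim_w g_w(\ulim_w m_w) = \ulim_w n_w$. The \emph{in particular} clause that $\ulim_w : \prod_w \operatorname{Ab} \to \operatorname{Ab}$ is an exact functor follows, since additivity of $\ulim_w$ on morphisms is already encoded in Definition-Proposition \ref{ultraprocut of maps}. The argument presents no genuine obstacle; the only step to flag is the systematic appeal to the axiom of choice to make the simultaneous selections $w \mapsto m_w$ and $w \mapsto \ell_w$ in the surjectivity and kernel-in-image arguments, which is ubiquitous in ultraproduct constructions.
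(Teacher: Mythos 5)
Your proof is correct and follows essentially the same approach as the paper's: checking each position via the ultrafilter's closure under finite intersection and pointwise lifts. In fact you are slightly more thorough, since the paper only writes out the injectivity and surjectivity arguments and dismisses exactness at the middle term as ``similar,'' whereas you carry it out explicitly.
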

\begin{proof}
	Let $f_w:L_w\to M_w$ and $g_w:M_w\to N_w$ be the morphisms in the given exact sequence. Here we only prove the injectivity of $\ulim_w f_w$ and the surjectivity of $\ulim_w g_w$. Suppose that $\ulim_w f_w(a_w)=0$ for $\ulim_w a_w \in \ulim_w L_w$. Then $f_w(a_w)=0$ for almost all $w$. Since $f_w$ is injective for almost all $w$, we have $a_w=0$ for almost all $w$. Therefore, $\ulim_w a_w=0$ in $\ulim_w L_w$. Hence, $\ulim_w f_w$ is injective. Next, let $\ulim_w c_w$ be any element in $\ulim_w N_w$. Since $g_w$ is surjective for almost all $w$, there exists $b_w\in M_w$ such that $g_w(b_w)=c_w$ for almost all $w$. Let $b=\ulim_w b_w$. Then we have $(\ulim_w g_w)(b)=\ulim_w g_w(b_w)=\ulim_w c_w$. Hence, $\ulim_w g_w$ is surjective. The rest of the proof is similar.
\end{proof}

 \L o\'{s}'s theorem is a fundamental theorem in the theory of ultraproducts. We will prepare some notions needed to state the theorem.
\begin{defn}
	The {\it language $\mathcal{L}$ of rings} is the set defined by
	\[
	\mathcal{L}:=\{0,1,+,-,\cdot \}.
	\]
\end{defn}
\begin{defn}
	{\it Terms of $\mathcal{L}$} are defined as follows:
	\begin{enumerate}[(i)]
		\item $0$,$1$ are terms.
		\item Variables are terms.
		\item If $s$, $t$ are terms, then ${-(s)}, (s)+(t), (s)\cdot(t)$ are terms.
		\item  A string of symbols is a term only if it can be shown to be a term by finitely many applications of the above three rules.
	\end{enumerate}
	 We omit parentheses and ``$\cdot$'' if there is no ambiguity.
\end{defn}
\begin{eg}
	$1+1$, $x_1(x_2+1)$,$-(-x)$ are terms.
\end{eg}
\begin{defn}
	{\it Formulas of $\mathcal{L}$} are defined as follows:
	\begin{enumerate}[(i)]
		\item If $s$, $t$ are terms, then $(s=t)$ is a formula.
		\item If $\varphi,\psi$ are formulas, then $(\varphi \land \psi), (\varphi \lor \psi), (\varphi \to \psi),(\lnot \varphi)$ are formulas.
		\item If $\varphi$ is a formula and $x$ is a variable, then $\forall x \varphi, \exists x \varphi$ are formulas.
		\item A string of symbols is a formula only if it can be shown to be a formula by finitely many applications of the above three rules.
	\end{enumerate}
	We omit parentheses if there is no ambiguity and use $\neq$, $\nexists$ in the usual way.
\end{defn}
\begin{rem}
	$\varphi \land \psi$ means ``$\varphi$ and $\psi$,'' $\varphi \lor \psi$ means ``$\varphi$ or $\psi$,'' $\varphi \to \psi$ means ``$\varphi$ implies $\psi$'' and $\lnot \varphi$ means ``$\varphi$ does not hold.''
\end{rem}
\begin{eg}
	0=1, $x=0 \land y\neq 1$, $\forall x \forall y (xy=yx)$ are formulas.
\end{eg}
\begin{rem}
	Variables in a formula $\varphi$ which is not bounded by $\forall$ or  $\exists$ are called free variables of $\varphi$. If $x_1,\dots,x_n$ are free variables of $\varphi$, we denote $\varphi (x_1,\dots,x_n)$ and we can substitute elements of a ring for $x_1,\dots,x_n$.
\end{rem}
\begin{thm}[\L o\'{s}'s theorem in the case of rings]\label{Los's theorem}
	Suppose that $\varphi(x_1,\dots,x_n$) is a formula of $\mathcal{L}$ and $A_w$ is a family of rings indexed by a set $W$ endowed with a non-principal ultrafilter. Let $a_{iw}\in A_w$. Then $\varphi (\ulim_w a_{1w}, \dots, \ulim_w a_{nw})$ holds in $A_\infty$ if and only if $\varphi(a_{1w},\dots,a_{nw})$ holds in $A_w$ for almost all $w$.
\end{thm}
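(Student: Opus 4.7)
The plan is to proceed by induction on the structural complexity of the formula $\varphi$. I would first establish an auxiliary claim at the level of terms: for any term $t(x_1,\dots,x_n)$ in the language $\mathcal{L}$ one has
\[
t(\ulim_w a_{1w},\dots,\ulim_w a_{nw}) = \ulim_w t(a_{1w},\dots,a_{nw})
\]
in $A_\infty$. This is a straightforward induction on term complexity: the constants $0,1$ and the operations $+,-,\cdot$ on $A_\infty$ were defined in Definition-Proposition \ref{ultraprocut of maps} as the ultraproducts of the componentwise operations, so they commute with $\ulim$ by construction.

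With this in hand, the base case of atomic formulas is immediate: a formula of the form $s=t$ holds in $A_\infty$ on the tuple $(\ulim_w a_{iw})$ if and only if $\ulim_w s(a_{iw})=\ulim_w t(a_{iw})$, which by the very definition of the equivalence relation defining $A_\infty$ is equivalent to $\{w\in W \mid s(a_{iw})=t(a_{iw})\}\in\mathcal{F}$, i.e., to $s=t$ holding in $A_w$ for almost all $w$.

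For the inductive step, I would treat the propositional connectives and the quantifiers separately. The connective $\land$ uses only that $\mathcal{F}$ is closed under finite intersections. The critical case is $\lnot$, where one invokes the defining property of an ultrafilter, namely $A\in\mathcal{F}$ or $A^{c}\in\mathcal{F}$ for every $A\subseteq W$, to conclude that $\lnot\varphi$ holds almost everywhere precisely when $\varphi$ fails almost everywhere. The remaining connectives $\lor$ and $\to$ are then reduced to $\land$ and $\lnot$. For the existential quantifier $\exists x\,\psi(x,\mathbf{y})$, one direction is direct: a witness $a=\ulim_w a_w$ in $A_\infty$ produces componentwise witnesses $a_w$, so by the inductive hypothesis $\psi(a_w,a_{iw})$ holds for almost all $w$. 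The converse uses the axiom of choice to pick, for each $w$ in the large set where a witness exists, some $a_w\in A_w$ with $\psi(a_w,a_{iw})$; then $\ulim_w a_w$ witnesses $\exists x\,\psi$ in $A_\infty$ by the inductive hypothesis. The universal case follows via the equivalence $\forall x\,\psi\leftrightarrow\lnot\exists x\,\lnot\psi$.

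The main conceptual obstacle is the treatment of $\lnot$: it is precisely here that the hypothesis \emph{ultrafilter} (as opposed to merely \emph{filter}) is essential, since otherwise the dichotomy ``$\varphi$ holds on a large set'' vs.\ ``$\lnot\varphi$ holds on a large set'' breaks down. A secondary point worth isolating is the existential step, which silently invokes the axiom of choice to select the componentwise witnesses; this is routine but becomes a bookkeeping issue when unpacked for formulas with nested quantifiers.
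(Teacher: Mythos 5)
The paper does not prove this theorem at all: it is stated as a standard fact from model theory (the remark following it even notes the generalization beyond rings), so there is no in-paper argument to compare against. Your proposal is the correct and entirely standard proof of \L o\'{s}'s theorem: induction on term complexity to show terms commute with $\ulim_w$ (which, as you note, is immediate from Definition-Proposition \ref{ultraprocut of maps}), the atomic case from the definition of the equivalence relation on $\prod_w A_w$, the ultrafilter dichotomy for $\lnot$, finite intersection for $\land$, and the axiom of choice to assemble componentwise witnesses for $\exists$. Two small points of hygiene: (i) since the paper's definition of formulas takes $\lor$ and $\to$ as primitive constructors rather than abbreviations, you should either handle them directly (the $\lor$ case again needs the ultrafilter property, via De Morgan) or first observe the logical equivalences; (ii) for the quantifier step the inductive hypothesis must be quantified over \emph{all} parameter tuples $(a_w)_w$, not just the fixed one, since the witness variable gets instantiated by arbitrary elements --- your write-up does this implicitly and correctly. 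Note also that non-principality of the ultrafilter plays no role in the proof itself; it is assumed throughout the paper only to make the ultraproduct nontrivial.
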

\begin{rem}
	Even if $A_w$ are not rings, replacing $\mathcal{L}$ properly, we can get the same theorem as above. We use one in the case of modules.
\end{rem}
\begin{eg}\label{formulas example}
	Let $A$ be a ring. If a property of rings is written by some formula, we can apply \L o\'{s}'s theorem.
	\begin{enumerate}
		\item $A$ is a field if and only if $\forall x (x=0 \lor \exists y(xy=1))$ holds.
		\item $A$ is a domain if and only if $\forall x \forall y(xy=0\to(x=0\lor y=0))$ holds.
		\item $A$ is a local ring if and only if 
		\[
		\forall x \forall y (\nexists z (xz=1)\land \nexists w(yw=1)\to \nexists u((x+y)u=1))
		\]
		holds.
		\item The condition that $A$ is an algebraically closed field is written by countably many formulas, i.e., the formula in (1) and for all $n\in \N$, 
		\[\forall a_0 \dots a_{n-1} \exists x (x^n+a_{n-1}x^{n-1}+\dots +a_0=0).\]
		\item The condition that $A$ is Noetherian cannot be written by formulas. Indeed, if $W=\N$ with some non-principal ultrafilter and $A_w=\C \llbracket x \rrbracket $, then $\ulim_n x^n\neq 0$ is in $\cap_n \m_\infty^n$, where $\m_\infty$ is the maximal ideal of $A_\infty$. Hence, $A_\infty$ is not Noetherian.
	\end{enumerate}
\end{eg}
\begin{prop}[{\cite[2.8.2]{affine}, see Example \ref{formulas example}}]
	If almost all $K_w$ are algebraically closed field, then $K_\infty$ is an algebraically closed field.
\end{prop}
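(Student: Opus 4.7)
The plan is to apply \L o\'{s}'s theorem formula-by-formula, using the observation recorded in Example \ref{formulas example}(4) that the class of algebraically closed fields is axiomatized by countably many first-order formulas in the language $\mathcal{L}$ of rings.

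First I would verify that $K_\infty$ is a field. The field axiom
\[
\forall x \, \bigl(x = 0 \ \lor \ \exists y\,(xy = 1)\bigr)
\]
together with the (finitely many) ring axioms holds in every $K_w$ for $w$ in a set belonging to $\mathcal{F}$, since almost all $K_w$ are fields. By Theorem \ref{Los's theorem}, the same formula holds in $K_\infty$, so $K_\infty$ is a field.

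Next I would handle algebraic closedness one degree at a time. For each integer $n \ge 1$, consider the formula
\[
\varphi_n \ : \ \forall a_0 \, \forall a_1 \, \cdots \, \forall a_{n-1} \, \exists x \, \bigl(x^n + a_{n-1}x^{n-1} + \cdots + a_1 x + a_0 = 0\bigr),
\]
which is a formula of $\mathcal{L}$. Since almost all $K_w$ are algebraically closed fields, $\varphi_n$ holds in $K_w$ for almost all $w$ (the exceptional set is contained in the negligible set where $K_w$ fails to be an algebraically closed field). Applying Theorem \ref{Los's theorem} to $\varphi_n$ yields that $\varphi_n$ holds in $K_\infty$. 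Thus every monic polynomial over $K_\infty$ of degree $n$ admits a root in $K_\infty$.

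Finally, any nonconstant polynomial $f(x) \in K_\infty[x]$ can be written as $f = c \cdot g$ with $c \in K_\infty^\times$ and $g$ monic of the same degree, which is possible because $K_\infty$ is a field. Applying the appropriate $\varphi_n$ produces a root of $g$, hence of $f$, in $K_\infty$. Therefore $K_\infty$ is algebraically closed. There is no real obstacle here: the entire content is the observation that algebraic closure is an axiomatizable property, after which \L o\'{s}'s theorem does all the work; the only mild subtlety to flag is that infinitely many formulas $\varphi_n$ are needed, but \L o\'{s}'s theorem is applied separately to each, so this causes no difficulty.
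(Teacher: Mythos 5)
Your proof is correct and is exactly the argument the paper intends: the paper proves this proposition simply by pointing to Example \ref{formulas example}(4), i.e.\ that being an algebraically closed field is axiomatized by countably many formulas of $\mathcal{L}$, and then applying \L o\'{s}'s theorem to each formula separately, just as you do.
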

\begin{thm}[Lefschetz principle, {\cite[Theorem 2.4]{affine}}]
	Let $W$ be the set of prime numbers endowed with some non-principal ultrafilter. Then
	\begin{equation*}
		\ulim_{p\in\mathcal{W}} \bar{\F_p}\cong \mathbb{C}.
	\end{equation*}
\end{thm}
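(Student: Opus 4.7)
The plan is to verify that $K := \ulim_{p \in W} \overline{\F_p}$ is an algebraically closed field of characteristic zero with $|K| = 2^{\aleph_0}$, and then apply Steinitz's classical theorem that any two algebraically closed fields sharing the same characteristic and the same uncountable cardinality are isomorphic. Since $\C$ is algebraically closed, of characteristic zero, and of cardinality $2^{\aleph_0}$, this will force $K \cong \C$.

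Algebraic closedness of $K$ is exactly the content of the proposition stated immediately before the theorem. For the characteristic, fix an arbitrary prime $\ell$. For every $p \in W \setminus \{\ell\}$, the closed $\mathcal{L}$-formula $\underbrace{1+\cdots+1}_{\ell \text{ times}} \neq 0$ holds in $\overline{\F_p}$; as $\mathcal{F}$ is non-principal, the cofinite set $W \setminus \{\ell\}$ lies in $\mathcal{F}$, so \L o\'{s}'s theorem (Theorem \ref{Los's theorem}) yields $\ell \cdot 1_K \neq 0$ in $K$. Since $\ell$ was arbitrary, $\chara K = 0$.

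For the cardinality, the upper bound $|K| \le \aleph_0^{\aleph_0} = 2^{\aleph_0}$ is immediate from $|\overline{\F_p}| = \aleph_0$ and the countability of $W$. For the matching lower bound I would appeal to the classical model-theoretic fact that an ultraproduct of countably infinite structures with respect to a non-principal ultrafilter on a countable index set has cardinality exactly $2^{\aleph_0}$; a standard proof constructs $2^{\aleph_0}$ pairwise-distinct classes in the ultraproduct out of an almost disjoint family of infinite subsets of $W$ of continuum size (a theorem of Sierpi\'{n}ski), or equivalently notes that such an ultraproduct is $\aleph_1$-saturated and infinite, which forces $|K| \ge 2^{\aleph_0}$. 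Combining the two bounds gives $|K| = 2^{\aleph_0} = |\C|$, and Steinitz's theorem concludes $K \cong \C$.

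The main obstacle is the cardinality lower bound: once it is in hand, Steinitz's theorem combined with \L o\'{s}'s theorem disposes of everything else almost mechanically. The cardinality statement itself is a genuinely set-theoretic ingredient, resting on the existence of a continuum-sized almost disjoint family on $\N$, and it is what distinguishes this ``algebraic'' proof of the Lefschetz principle from the purely first-order part.
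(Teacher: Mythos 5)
Your proposal is correct and follows essentially the same route as the paper: establish that the ultraproduct is an algebraically closed field of characteristic zero with the cardinality of the continuum, then invoke Steinitz's uniqueness theorem for uncountable algebraically closed fields. The only difference is that you flesh out the cardinality computation (which the paper dismisses with ``we can check''), and your sketch of the lower bound via an almost disjoint family or $\aleph_1$-saturation is a correct way to do it.
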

\begin{proof}
	Let $C=\ulim_p\bar{\F_p}$. By the above theorem, $C$ is an algebraically closed field. For any prime number $q$, we have $q\neq 0$ in $\bar{\F_p}$ for almost all $p$. Hence, $q\neq 0$ in $C$, i.e., $C$ is of characteristic zero. We can check that $C$ has the same cardinality as $\C$. If two algebraically closed uncountable field of characteristic zero have the equal cardinality, then they are isomorphic. Hence, $C\cong \C$. (Note that this isomorphism is not canonical).
\end{proof}
\subsection{Non-standard hulls}
 In this subsection, we will introduce the notion of non-standard hulls along \cite{affine}, \cite{use of ultraproducts}. Throughout this subsection, let $\mathcal{P}$ be the set of prime numbers and we fix a non-principal ultrafilter on $\mathcal{P}$ and an isomorphism $\ulim_p \bar{\F_p}\cong \C$.
 
  Let $\C[X_1,\dots,X_n]_\infty:=\ulim_p\bar{\F_p}[X_1,\dots,X_n]$. Then we have the following proposition.
 \begin{prop}[{\cite[Theorem 2.6]{affine}}]
 	We have a natural map $\C[X_1,\dots,X_n]\to \C[X_1,\dots,X_n]_\infty$, which is faithfully flat.
 \end{prop}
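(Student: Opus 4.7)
The plan is to construct the natural map coefficient-wise and then establish faithful flatness by elementary arguments for injectivity and for properness of extended maximal ideals, combined with the equational criterion for flatness together with a uniform syzygy-lifting argument powered by \L o\'{s}'s theorem.

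First I would construct the map. A polynomial $f=\sum_\alpha a_\alpha X^\alpha\in\C[X_1,\dots,X_n]$ has finite support, and by the fixed isomorphism $\C\cong\ulim_p\bar{\F}_p$, each coefficient lifts as $a_\alpha=\ulim_p a_{\alpha,p}$ for chosen representatives $a_{\alpha,p}\in\bar{\F}_p$. Since the support is finite, $f_p:=\sum_\alpha a_{\alpha,p}X^\alpha$ is a well-defined polynomial in $\bar{\F}_p[X_1,\dots,X_n]$ for every $p$, and one sets the image of $f$ to be $\ulim_p f_p\in\C[X_1,\dots,X_n]_\infty$. Independence of the choice of representatives and the fact that the resulting map is a ring homomorphism both follow from \L o\'{s}'s theorem applied to the defining equations of the ring operations. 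Injectivity is immediate: if $f\ne 0$ some coefficient $a_\alpha$ is nonzero, so $a_{\alpha,p}\ne 0$ for almost all $p$, forcing $f_p\ne 0$ almost everywhere.

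Next I would handle the "faithful" part. For a maximal ideal $\m=(X_1-c_1,\dots,X_n-c_n)$ of $R:=\C[X_1,\dots,X_n]$, choose representatives $c_i=\ulim_p c_{i,p}$. The evaluation maps $\operatorname{ev}_p:\bar{\F}_p[X]\to\bar{\F}_p$, $g_p\mapsto g_p(c_{1,p},\dots,c_{n,p})$, assemble into a surjective ring homomorphism $\operatorname{ev}_\infty:\C[X]_\infty\to\C$ that annihilates every generator of $\m R_\infty$ (where $R_\infty:=\C[X_1,\dots,X_n]_\infty$), so $\m R_\infty\ne R_\infty$.

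The main task is flatness, via the equational criterion: given a relation $\sum_{i=1}^k a_ir_i=0$ in $R_\infty$ with $a_i\in R$ and $r_i\in R_\infty$, I must express $(r_1,\dots,r_k)$ as an $R_\infty$-linear combination of syzygies of $(a_1,\dots,a_k)$ already defined over $R$. Writing $r_i=\ulim_p r_{i,p}$ and lifting $a_i$ to $a_{i,p}\in\bar{\F}_p[X]$ as above, \L o\'{s}'s theorem gives $\sum_i a_{i,p}r_{i,p}=0$ in $\bar{\F}_p[X]$ almost everywhere. By Hilbert's basis theorem the syzygy module $\operatorname{Syz}_R(a_1,\dots,a_k)$ is finitely generated, say by $\sigma^{(1)},\dots,\sigma^{(N)}\in R^k$, which I lift to $\sigma^{(j)}_p\in\bar{\F}_p[X]^k$. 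The crucial claim is that for almost all $p$, the lifts $\sigma^{(j)}_p$ generate $\operatorname{Syz}_{\bar{\F}_p[X]}(a_{1,p},\dots,a_{k,p})$. Granting this, for almost all $p$ one can write $(r_{1,p},\dots,r_{k,p})=\sum_j s_{j,p}\sigma^{(j)}_p$ for some $s_{j,p}\in\bar{\F}_p[X]$, and taking ultraproducts yields the required factorization in $R_\infty$.

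The main obstacle is the uniformity claim on syzygies. I would resolve it via a Gr\"obner-basis argument: Buchberger's algorithm applied to $(a_1,\dots,a_k)$ over $\C$ terminates in finitely many coefficient operations, and since the nonzero coefficients produced during the computation involve only finitely many elements of $\C$, their lifts reduce to nonzero elements of $\bar{\F}_p$ for almost all $p$. Hence the computed Gr\"obner basis reduces mod $p$ to a Gr\"obner basis of $\operatorname{Syz}_{\bar{\F}_p[X]}(a_{1,p},\dots,a_{k,p})$ for almost all $p$, which gives the desired generation statement. Alternatively, one can appeal to the general machinery of non-standard hulls of finitely generated $\C$-algebras developed in \cite{affine}, which packages exactly this type of reduction-modulo-$p$ argument for finitely presented data. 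Combined with the preceding properness at every maximal ideal, this yields faithful flatness.
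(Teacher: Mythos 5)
The paper itself gives no proof of this proposition: it is quoted as an external result from Schoutens \cite[Theorem 2.6]{affine}, whose proof ultimately rests on van den Dries--Schmidt-type uniform bounds for linear equations over polynomial rings. Your argument is a correct, essentially self-contained reconstruction of that underlying proof. The coefficient-wise construction of the map and its injectivity are fine; the properness of $\m R_\infty$ at every maximal ideal via the ultraproduct of evaluation maps, combined with flatness, does give faithful flatness; and reducing flatness to the equational criterion plus the uniform syzygy-lifting claim is exactly the right decomposition, with \L o\'{s}'s theorem correctly transferring the relation $\sum_i a_i r_i=0$ to almost all $p$ and back.

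The one imprecision is in the final step: Buchberger's algorithm applied to $(a_1,\dots,a_k)$ produces a Gr\"obner basis of the \emph{ideal} these elements generate, not of the syzygy module. To obtain generators of $\operatorname{Syz}_R(a_1,\dots,a_k)$ that specialize correctly you need the extended Buchberger algorithm together with Schreyer's theorem: the S-polynomial syzygies of the computed Gr\"obner basis generate the syzygy module of the Gr\"obner basis, and the transformation matrices between $(a_1,\dots,a_k)$ and that basis also specialize, because every division performed in the finitely many steps of the computation is by one of finitely many nonzero complex numbers, whose approximations are nonzero for almost all $p$; hence for almost all $p$ the specialized data is again a Gr\"obner basis with the same leading terms and the specialized syzygies generate $\operatorname{Syz}_{\bar{\F_p}[X]}(a_{1,p},\dots,a_{k,p})$. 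With that standard refinement your uniformity claim holds and the proof is complete. Compared with simply invoking the citation, your route makes the key uniform bound explicit and algorithmic, which is a genuine gain in transparency at the cost of some bookkeeping.
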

\begin{defn}
	The ring $\C[X_1,\dots,X_n]_\infty$ is said to be the {\it non-standard hull of $\C[X_1,\dots,X_n]$}.
\end{defn}
\begin{rem}
	If $n\ge 1$, then $\C[X_1,\dots,X_n]_\infty$ is not Noetherian. Let $y=\ulim_p X_1^p$. Then, for any integer $l\ge 1$, $X_1^p\in (X_1,\dots,X_n)^l$ for almost all $p$. Hence, $y\in (X_1,\dots,X_n)^l$ for any $l$ by \L o\'{s}'s theorem. Therefore, $\cap_l (X_1,\dots,X_n)^l\neq 0$. By Krull's intersection theorem, $\C[X_1,\dots,X_n]_\infty$ is not Noetherian.
\end{rem}
\begin{defn}
	Suppose that $R$ is a finitely generated $\C$-algebra. Let 
	\[
	R\cong \C[X_1,\dots,X_n]/I
	\] be a presentation of $R$. The {\it non-standard hull $R_\infty$ of $R$} is defined by
	\[
	R_\infty:=\C[X_1,\dots,X_n]_\infty/I\C[X_1,\dots,X_n]_\infty.
	\]
\end{defn}
\begin{rem}
	The non-standard hull is independent of a representation of $R$. If $R\cong \C[X_1,\dots,X_n]/I\cong \C[Y_1,\dots,Y_m]/J$, then $\bar{\F_p}[X_1,\dots,X_n]/I_p \cong \bar{\F_p}[Y_1,\dots,Y_m]/J_p$ for almost all $p$, see Definition \ref{approximations}, Definition \ref{approximation of homomorphism}.

\end{rem}
\begin{rem}
	The natural map $R\to R_\infty$ is faithfully flat since this is a base change of the homomorphism $\C[X_1,\dots,X_n]\to \C[X_1,\dots,X_n]_\infty$. By faithfully flatness, we have $IR_\infty\cap R=R$ for any ideal $I\subseteq R$.
\end{rem}
\begin{defn}
	Let $a\in \C$. Since $\ulim_p \bar{\mathbb{F}_p}\cong \C$, we have a family $(a_p)_p$ of elements of $\bar{\mathbb{F}_p}$ such that $\ulim a_p=a$. Then we call $(a_p)_p$ an {\it approximation of $a$}.
\end{defn}
\begin{prop}\label{affine non-standard hull prop}
	Let $I=(f_1,\dots,f_s)$ be an ideal of $\C[X_1,\dots, X_n]$ and $f_i=\sum a_{i\nu}X^\nu$. Let $I_p=(f_{1p},\dots,f_{sp})\bar{\F_p}[X_1,\dots,X_n]$, where $f_{ip}=\sum a_{i\nu p}X^\nu$ and each $(a_{i\nu p})_p$ is an approximation of $a_{i\nu}$. Then we have
	\[
	I\C[X_1,\dots,X_n]_\infty =\ulim_p I_p
	\]
	and 
	\[
	R_\infty\cong \ulim_p (\bar{\F_p}[X_1,\dots,X_n]/I_p).
	\]
\end{prop}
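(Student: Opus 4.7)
The plan is to prove the two claims in the stated order, reducing the second to the first via the exactness of the ultraproduct functor established in the preceding proposition.

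The key preliminary observation is that, viewed inside $\C[X_1,\dots,X_n]_\infty = \ulim_p \bar{\F_p}[X_1,\dots,X_n]$, the element $f_i = \sum_\nu a_{i\nu} X^\nu$ coincides with $\ulim_p f_{ip}$. Indeed, the natural map $\C[X_1,\dots,X_n] \to \C[X_1,\dots,X_n]_\infty$ sends each scalar $a_{i\nu} = \ulim_p a_{i\nu p}$ and each indeterminate $X_j = \ulim_p X_j$ to itself, and Definition-Proposition \ref{ultraprocut of maps} guarantees that the ultraproduct operation commutes with the ring operations on the finite expression $\sum_\nu a_{i\nu} X^\nu$.

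With this identification in hand, the inclusion $I\C[X_1,\dots,X_n]_\infty \subseteq \ulim_p I_p$ is immediate: a typical element $\sum_i g_i f_i$ with $g_i = \ulim_p g_{ip}$ equals $\ulim_p(\sum_i g_{ip} f_{ip})$ by commutation of ultraproducts with finite sums and products, and each bracketed term lies in $I_p$. For the reverse inclusion I would represent an arbitrary $g \in \ulim_p I_p$ as $\ulim_p g_p$ with $g_p \in I_p$ for almost all $p$, choose a presentation $g_p = \sum_i h_{ip} f_{ip}$ for each such $p$, and then set $h_i := \ulim_p h_{ip} \in \C[X_1,\dots,X_n]_\infty$; the same commutation yields $\sum_i h_i f_i = g$.

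The one subtle point, though not really an obstacle, is that the witnessing coefficients $h_{ip}$ carry no uniform bound on their degrees or on the complexity of their coefficients, so the $h_i$ genuinely live in the non-standard hull rather than in $\C[X_1,\dots,X_n]$ itself; this is precisely the reason one passes to $\C[X_1,\dots,X_n]_\infty$, and it is automatic from the definition of $\ulim_p$. For the second claim I would simply apply the ultraproduct functor to the short exact sequences
\[
0 \to I_p \to \bar{\F_p}[X_1,\dots,X_n] \to \bar{\F_p}[X_1,\dots,X_n]/I_p \to 0
\]
and invoke exactness of $\ulim_p$ to obtain
\[
\ulim_p\bigl(\bar{\F_p}[X_1,\dots,X_n]/I_p\bigr) \cong \C[X_1,\dots,X_n]_\infty \big/ \ulim_p I_p,
\]
and then substitute the first claim to rewrite the denominator as $I\C[X_1,\dots,X_n]_\infty$, identifying the left-hand side with $R_\infty$.
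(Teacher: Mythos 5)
Your argument is correct, and it is the standard one: the paper itself states this proposition without proof (it is essentially routine bookkeeping imported from Schoutens's work on non-standard hulls), so there is no authorial proof to compare against. Both inclusions of the first claim follow, as you say, from the fact that ultraproducts commute with the finitely many ring operations appearing in $\sum_i h_i f_i$, and your reduction of the second claim to the first via the exactness of $\ulim_p$ (the proposition proved earlier in the section) matches the definition $R_\infty = \C[X_1,\dots,X_n]_\infty / I\C[X_1,\dots,X_n]_\infty$ exactly; your remark that the witnesses $h_{ip}$ admit no uniform degree bound, so that the $h_i$ live only in the hull, is precisely the right point to flag.
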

\begin{defn}\label{approximations}
	Let $R$ be a finitely generated $\C$-algebra. 
	\begin{enumerate}
		\item In the setting of Proposition \ref{affine non-standard hull prop},  a family $R_p$ is said to be an {\it approximation of $R$} if $R_p$ is an $\bar{\F_p}$-algebra and $R_p \cong \bar{\F_p}[X_1,\dots,X_n]/I_p$ for almost all $p$. Then we have $R_\infty  \cong \ulim_p R_p$.
		\item For an element $f\in R$, a family $f_p$ is said to be an {\it approximation of $f$} if $f_p\in R_p$ for almost all $p$ and $f=\ulim_p f_p$ in $R_\infty$. For $f\in R_\infty$, we define an {\it approximation of $f$} in the same way.
		\item For an ideal $I=(f_1,\dots,f_s) \subseteq R$, a family $I_p$ is said to be an {\it approximation of $I$} if $I_p$ is an ideal of $R_p$ and $I_p=(f_{1p},\dots,f_{sp})$ for almost all $p$. For finitely generated ideal $I\subseteq R_\infty$, we define {\it an approximation of $I$} in the same way.
	\end{enumerate}
\end{defn}
\begin{rem}
	This is an abuse of notation since approximations should be denoted by $(R_p)_p$, $(f_p)_p$, $(I_p)_p$, etc.
\end{rem}
\begin{defn}\label{approximation of homomorphism}
	Let $\phi:R\to S$ be a $\C$-algebra homomorphism between finitely generated $\C$-algebras. 
	Suppose that $R\cong \C[X_1,\dots,X_n]/I$ and $S\cong\C[Y_1,\dots,Y_m]/J$. Let $f_i\in \C[Y_1,\dots,Y_m]$ be a lifting of the image of $X_i \mod I$ under $\phi$.
	Then we define an {\it approximation $\varphi_p:R_p\to S_p$ of $\varphi$} as the morphism induced by $X_i\mapsto f_{ip}$.
	Let $\varphi_\infty:=\ulim_p\varphi_p$, then the following diagram commutes.
	\[
	\xymatrix{
	R \ar[r]^{\varphi} \ar[d] & S \ar[d] \\
	R_\infty \ar[r]^{\varphi_\infty} & S_\infty
	}
	\]
\end{defn}

\begin{prop}[{\cite[Corollary 4.2]{affine},\cite[Theorem 4.3.4]{use of ultraproducts}}]
	Let $R$ be a finitely generated $\C$-algebra. An ideal $I\subseteq R$ is prime if and only if $I_p$ is prime for almost all $p$ if and only if $IR_\infty$ is prime.
\end{prop}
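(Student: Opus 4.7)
The plan is to reduce the three-way equivalence to a single claim about $S := R/I$: that $S$ is a domain if and only if $S_p := R_p/I_p$ is a domain for almost all $p$, if and only if $S_\infty := R_\infty/IR_\infty$ is a domain. Since $R$ is a finitely generated $\C$-algebra it is Noetherian, so $I$ is finitely generated, and Proposition \ref{affine non-standard hull prop} gives $\ulim_p I_p = IR_\infty$. Applying the exactness of the ultraproduct functor to the short exact sequences $0 \to I_p \to R_p \to R_p/I_p \to 0$ then yields the canonical isomorphism $\ulim_p S_p \cong R_\infty/IR_\infty$, which identifies the second and third conditions as statements about the same ring.

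The equivalence between ``$S_p$ is a domain for almost all $p$'' and ``$S_\infty$ is a domain'' is then immediate from \L o\'{s}'s theorem, since ``being a domain'' is captured by the first-order formulas $1 \neq 0$ and $\forall x \forall y\,(xy = 0 \to x = 0 \lor y = 0)$ as in Example \ref{formulas example}(2). For the direction ``$S_\infty$ domain $\Rightarrow$ $S$ domain,'' the faithful flatness of $R \to R_\infty$ descends to faithful flatness, and in particular injectivity, of $S \to S_\infty$; hence $S$ embeds into the domain $S_\infty$ and is itself a domain.

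The main obstacle is the converse, ``$S$ domain $\Rightarrow$ $S_p$ domain for almost all $p$.'' My plan is to combine Noether normalization with the primitive element theorem. Choose a finite injective homomorphism $A := \C[t_1, \ldots, t_d] \hookrightarrow S$ with $d = \dim S$; then $S \otimes_A \operatorname{Frac}(A)$ is a finite-dimensional algebra over $K := \operatorname{Frac}(A)$ that is a domain (as a localization of $S$), hence a finite field extension $L/K$. Since $\Char K = 0$, the primitive element theorem gives $L = K(\beta)$ with minimal polynomial $g(x) \in K[x]$; clearing denominators and invoking Gauss's lemma, we may arrange $\tilde{g}(x) \in A[x]$ to be irreducible. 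Approximating, we obtain a finite injective homomorphism $A_p \hookrightarrow S_p$ and a polynomial $\tilde{g}_p(x) \in A_p[x]$, and the crucial technical step is to show that $\tilde{g}_p$ remains irreducible in $\operatorname{Frac}(A_p)[x]$ for almost all $p$. I would establish this by a $\Z$-spreading-out argument: choose a finitely generated $\Z$-subalgebra $A_0 \subset \C$ containing the coefficients of $\tilde{g}$, note that ``$\tilde{g}$ admits a nontrivial factorization of fixed bidegrees'' cuts out a Zariski-closed proper subset of $\Spec A_0$, and conclude that after inverting finitely many elements of $A_0$ the irreducibility persists at every closed-point reduction, hence for almost all residue characteristics $p$. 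Tracking the primitive element $\beta$ through the approximation then yields that $S_p$ is a domain for almost all $p$, completing the proof.
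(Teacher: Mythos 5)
First, a point of reference: the paper does not prove this proposition at all---it is quoted from Schoutens, where the standard argument runs through the van den Dries--Schmidt uniform degree bounds, which make ``$I$ is prime'' a first-order condition on the coefficients of a generating set and hence directly transferable by \L o\'{s}'s theorem. So your route is necessarily different from the cited one. Your reduction to the ring $S=R/I$, the identification $\ulim_p S_p\cong R_\infty/IR_\infty$, the \L o\'{s} equivalence between the second and third conditions, and the descent of the domain property along the faithfully flat map $S\to S_\infty$ are all correct.

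The hard direction, ``$S$ a domain $\Rightarrow$ $S_p$ a domain for almost all $p$,'' has a genuine gap. Your argument only controls the generic fibre $S_p\otimes_{A_p}\operatorname{Frac}(A_p)$: even granting that $\tilde g_p$ stays irreducible and that this fibre is the field $\operatorname{Frac}(A_p)[x]/(\tilde g_p)$, you cannot conclude that $S_p$ is a domain unless $S_p$ injects into its generic fibre, i.e.\ unless $S_p$ is torsion-free over $A_p$. A finite injective extension $A_p\hookrightarrow S_p$ whose generic fibre is a field can fail badly to be a domain: for instance $\bar{\F_p}[t]\hookrightarrow\bar{\F_p}[t,x]/(x^2,xt)$ has generic fibre $\bar{\F_p}(t)$, yet the total ring is not even reduced. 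Nothing in your sketch rules out $S_p$ acquiring associated primes lying over proper closed subsets of $\Spec A_p$. The gap is repairable: since $S$ is a torsion-free finitely generated module over the polynomial ring $A$, it embeds into a free module $A^m$; approximating this embedding and using flatness of $A\to A_\infty$ together with exactness of $\ulim_p$ shows that $S_p\hookrightarrow A_p^m$, hence that $S_p$ is $A_p$-torsion-free, for almost all $p$. With that supplied---and with the irreducibility persistence made precise (Gauss's lemma in the UFD $A_p[x]$ bounds the $t$-degrees of potential factors, so your constructibility argument over $\Spec A_0$ does close up, and one must also check that $A_p[x]/(\tilde g_p)\to S_p$ stays a generic isomorphism)---your geometric route goes through; but as written the decisive step is missing.
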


\begin{defn}
	Let $R$ be a local ring essentially of finite type over $\C$. Suppose that $R\cong S_\p$, where $S$ is a finitely generated $\C$-algebra and $\p$ is a prime ideal of $S$. Then we define the {\it non-standard hull $R_\infty$ of $R$} by
	\[
	R_\infty:=(S_\infty)_{\p S_\infty}.
	\]
\end{defn}
\begin{rem}
	Since $S\to S_\infty$ is faithfully flat, $R\to R_\infty$ is faithfully flat.
\end{rem}
\begin{defn}
		Let $S$ be a finitely generated $\C$-algebra, $\p$ a prime ideal of $S$ and $R\cong S_\p$.
	\begin{enumerate}
		\item A family $R_p$ is said to be an {\it approximation of $R$} if $R_p$ is an $\bar{\F_p}$-algebra and $R_p \cong (S_p)_{\p_p}$ for almost all $p$. Then we have $R_\infty  \cong \ulim_p R_p$.
		\item For an element $f\in R$, a family $f_p$ is said to be an {\it approximation of $f$} if $f_p\in R_p$ for almost all $p$ and $f=\ulim_p f_p$ in $R_\infty$. For $f\in R_\infty$, we define an {\it approximation of $f$} in the same way.
		\item For an ideal $I=(f_1,\dots,f_s) \subseteq R$, a family $I_p$ is said to be an {\it approximation of $I$} if $I_p$ is an ideal of $R_p$ and $I_p=(f_{1p},\dots,f_{sp})$ for almost all $p$. For finitely generated ideal $I\subseteq R_\infty$, we define {\it an approximation of $I$} in the same way.
	\end{enumerate}
\end{defn}
\begin{defn}
	Let $S_1,S_2$ be finitely generated $\C$-algebras and $\p_1,\p_2$ prime ideals of $S_1,S_2$ respectively. Suppose that $R_i\cong (S_i)_{\p_i}$ and $\varphi:R_1\to R_2$ is a local $\C$-algebra homomorphism. Let $S_1\cong \C[X_1,\dots,X_n]/I$ and $f_j/g_j$ be the image of $X_j$ under $\varphi$, where $f_j\in S_2$, $g_j\in S_2\setminus \p_2$. Then we say that a homomorphism $R_{1p}\to R_{2p}$ induced by $X_j\mapsto f_{jp}/g_{jp}$ is an {\it approximation of $\varphi$}. Let $\varphi_\infty:=\ulim_p \varphi_p$. Then the following commutative diagram commutes:
	\[
	\xymatrix{
		R \ar[r]^{\varphi} \ar[d] & S \ar[d] \\
		R_\infty \ar[r]^{\varphi_\infty} & S_\infty
	}.
	\]
\end{defn}
\begin{defn}
	Let $R$ be a finitely generated $\C$-algebra or a local ring essentially of finite type over $\C$ and let $M$ be a finitely generated $R$-module. Write $M$ as the cokernel of a matrix $A$, i.e., given by an exact sequence
	\[
	R^m\xrightarrow{A} R^n\to M \to 0,
	\]
	where $m,n$ are positive integers. Let $A_p$ be an approximation of $A$ defined by entrywise approximations. Then the cokernel $M_p$ of the matrix $A_p$ is called an {\it approximation of $M$} and the ultraproduct $M_\infty:=\ulim_p M_p$ is called the {\it non-standard hull of $M$}. $M_\infty$ is a finitely generated $R_\infty$-module and independent of the choice of matrix $A$.
\end{defn}
\begin{rem}
	Tensoring the above exact sequence with $R_\infty$, we have an exact sequence
	\[
	R_\infty^m \xrightarrow{A} R_\infty^n \to M\otimes_R R_\infty \to 0.
	\]
	Taking the ultraproduct of exact sequences
	\[
	R_p^m\xrightarrow{A_p} R_p^n \to M_p \to 0,
	\]
	we have an exact sequence
	\[
	R_\infty^m\xrightarrow{A} R_\infty^n \to M_\infty\to 0.
	\]
	Therefore, $M_\infty\cong M\otimes_R R_\infty$.
	Note that if $m,n$ is not integers but infinite cardinals, then the naive definition of an approximation of $A$ does not work and the ultraproduct of $R_p^{\oplus n}$ is not necessarily equal to $R_\infty^{\oplus n}$.
\end{rem}
Here we state basic properties about non-standard hulls and approximations.
\begin{prop}[{\cite[2.9.5, 2.9.7, Theorem 4.5, Theorem 4.6]{affine},\cite[Section 4.3]{use of ultraproducts}, cf. \cite[5.1]{AS}}]
	Let $R$ be a local ring esseentially of finite type over $\C$, then the following hold:
	\begin{enumerate}
		\item $R$ has dimension $d$ if and only if $R_p$ has dimension $d$ for almost all $p$.
		\item $\x=x_1,\dots,x_i$ is an $R$-regular sequence if and only if $\x_p=x_{1p},\dots, x_{ip}$ is an $R_p$-regular sequence for almost all $p$ if and only if $\x$ is an $R_\infty$-regular sequence.
		\item $\x=x_1,\dots,x_d$ is a system of parameters of $R$ if and only if $\x_p$ is a system of parameters of $R_p$ for almost all $p$.
		\item $R$ is regular if and only if $R_p$ is regular for almost all $p$.
		\item $R$ is Gorenstein if and only if $R_p$ is Gorenstein for almost all $p$.
		\item $R$ is Cohen-Macaulay if and only if $R_p$ is Cohen-Macaulay for almost all $p$.
	 	\end{enumerate}
\end{prop}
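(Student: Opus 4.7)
The plan is to reduce to the finitely generated affine case via $R \cong S_\p$ and $R_p \cong (S_p)_{\p_p}$, and then to handle each item through one of two mechanisms: (a) a first-order characterization in the language of rings, to which \L o\'{s}'s theorem applies directly, or (b) a condition on finitely generated $R$-modules that transfers via the compatibility $M_\infty \cong M \otimes_R R_\infty$ of approximations with base change, combined with faithful flatness of $R \to R_\infty$.

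For (1), I would invoke Noether normalization: a finite injective $\C$-algebra map $\C[y_1,\ldots,y_e] \hookrightarrow S_{\mathrm{red}}$ with $e = \dim S$ approximates to a finite injection over $\bar{\F_p}$ for almost all $p$ (finiteness is witnessed by a finite module-generating set and reducedness transfers by the prime-preservation statement already cited), giving $\dim S_p = \dim S$. Then $\dim R = \operatorname{ht} \p$ is realized by a maximal chain $\p_0 \subsetneq \cdots \subsetneq \p_k = \p$; strict containments between finitely generated ideals are first-order in generators, so the chain transfers to $S_p$, yielding $\dim R \le \dim R_p$, and the reverse inequality follows by taking a longest chain in $R_p$, forming its ultraproduct in $R_\infty$, and descending to $R$ by faithful flatness. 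Item (3) then combines (1) with the first-order equivalence that $(\x)$ is $\m$-primary iff $\m^N \subseteq (\x)$ for some $N$.

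For (2), the Koszul complex $K_\bullet(\x; R)$ is a bounded complex of finite free $R$-modules whose approximation is $K_\bullet(\x_p; R_p)$ and whose formation commutes with base change to $R_\infty$; hence $H_j(\x; R)_\infty \cong H_j(\x; R_\infty)$, with approximations $H_j(\x_p; R_p)$. Vanishing of a finitely generated $R$-module is equivalent to vanishing of almost all of its approximations and to vanishing after tensoring with $R_\infty$, so regularity of $\x$ (equivalent to $H_j = 0$ for $j \ge 1$) transfers in all three directions. Item (6) is immediate by combining (2) with (3). For (4), $R$ is regular iff $\dim_{R/\m} \m/\m^2 = \dim R$; this reduces to (1) together with a generator-count argument for the finitely generated $R/\m$-module $\m/\m^2$, which transfers through approximations by Nakayama's lemma.

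The hardest case is (5). I would use that $R$ is Gorenstein iff it is Cohen-Macaulay and, for some (equivalently every) system of parameters $\x$, the socle $((\x) :_R \m)/(\x)$ of the Artinian ring $R/(\x)$ is one-dimensional over $R/\m$; equivalently, there exists $u \in R \setminus (\x)$ with $((\x) :_R \m) = (\x) + Ru$. Both conditions can be phrased as a first-order statement in a fixed finite generating set of $\m$ and the chosen approximations of $\x$ and $u$, but the obstacle is that the colon ideal $((\x) : \m)$ must itself be tracked through approximations. I would realize it as the kernel of an explicit map between finite free quotients of $R$, so that its formation commutes with approximation and with base change to $R_\infty$; the one-dimensionality of the socle quotient then becomes a single first-order statement to which \L o\'{s}'s theorem applies, and faithful flatness descends the conclusion back to $R$.
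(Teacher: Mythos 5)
The paper itself offers no proof of this proposition: it is imported verbatim from Schoutens's work (\cite{affine}, \cite{use of ultraproducts}) and Aschenbrenner--Schoutens, so there is no in-paper argument to compare against. Your overall strategy --- reduce to the affine case, then use \L o\'{s}'s theorem for first-order conditions and the compatibility $M_\infty\cong M\otimes_R R_\infty$ plus faithful flatness for module-theoretic conditions --- is exactly the strategy of those references, and items (2)--(6) are essentially handled correctly, including your (correct) identification of the colon-ideal tracking as the delicate point in (5).

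There is, however, a genuine gap in the direction $\dim R_p\le\dim R$ of item (1). You propose to take a longest chain of primes in $R_p$, form its ultraproduct in $R_\infty$, and ``descend to $R$ by faithful flatness.'' Contraction along a faithfully flat map does not preserve strict inclusions of primes, and faithful flatness gives no upper bound on the dimension of the extension: $R\to R[t]_{(\m,t)}$ is faithfully flat, local, and raises the dimension, and in the case at hand $R_\infty$ genuinely has Krull dimension larger than $\dim R$ in general (already $\ulim_p\bar{\F_p}[x]$ localized at the origin is a valuation-type ring whose value group is the ultrapower ${}^*\Z$, which has proper nontrivial convex subgroups, so its Krull dimension exceeds $1$). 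Hence no chain in $R_\infty$ can be ``descended'' this way. The correct argument for this inequality is the mechanism you already invoke in item (3): take a system of parameters $x_1,\dots,x_d$ of $R$, note that $\m^N\subseteq(x_1,\dots,x_d)$ for some $N$, transfer this inclusion by \L o\'{s}'s theorem to $\m_p^N\subseteq(x_{1p},\dots,x_{dp})$, and conclude $\dim R_p\le d$. A second, smaller soft spot is in item (2): for the non-Noetherian ring $R_\infty$, acyclicity of the Koszul complex does not by itself imply that $\x$ is a regular sequence (depth sensitivity of Koszul homology is a Noetherian local statement), so the third equivalence should instead be obtained by applying \L o\'{s}'s theorem directly to the first-order conditions $((x_1,\dots,x_{j-1}):x_j)=(x_1,\dots,x_{j-1})$ and comparing with $R$ via $(I:_Rx)R_\infty=(IR_\infty:_{R_\infty}x)$ and $IR_\infty\cap R=I$.
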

\begin{prop}[{\cite[Proposition 3.9]{TY}}]
	Let $R$ be a local ring essentially of finite type over $\mathbb{C}$. The  following conditions are equivalent to each other.
	\begin{enumerate}
		\item $R$ is normal.
		\item $R_p$ is normal for almost all $p$.
		\item $R_\infty$ is normal.
	\end{enumerate}
\end{prop}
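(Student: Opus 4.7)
The plan is to apply Serre's criterion for normality together with \L o\'{s}'s theorem and the transfer principles for dimension, regularity, and regular sequences established in the previous proposition. Recall that for a Noetherian local ring, being normal is equivalent to being reduced and satisfying both $(R_1)$ and $(S_2)$; and because $R$ is essentially of finite type over $\C$, the $(R_1)$ condition is witnessed by the Jacobian ideal.

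For (1) $\Leftrightarrow$ (2) I would transfer each of the three conditions in Serre's criterion separately. Reducedness amounts to $\sqrt{(0)}=(0)$, and since this involves only a finitely generated ideal of a Noetherian ring, picking approximations of a finite set of generators of $\sqrt{(0)}$ (together with a power witnessing their nilpotence) transfers reducedness in both directions. For $(R_1)$, I would use the Jacobian description of the singular locus: an explicit Jacobian ideal $J \subseteq R$ has an approximation $J_p$ cutting out $\Sing(R_p)$ for almost all $p$, and the height inequality $\operatorname{ht}(J) \geq 2$ transfers using that dimensions and systems of parameters pass to approximations. For $(S_2)$, one rephrases the depth condition via the existence of length-two regular sequences on localizations at primes of height $\geq 2$, and applies item (2) of the previous proposition.

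The equivalence (2) $\Leftrightarrow$ (3) exploits the identification $R_\infty \cong (\ulim_p S_p)_{\p S_\infty}$ together with the faithful flatness of $R \to R_\infty$. Faithful flatness gives the descent (3) $\Rightarrow$ (1). For (2) $\Rightarrow$ (3), being a domain is first-order expressible (see Example \ref{formulas example}), so \L o\'{s}'s theorem immediately yields that $R_\infty$ is a domain whenever almost all $R_p$ are. To see $R_\infty$ is integrally closed in its fraction field, one starts with a fraction $x/y \in \operatorname{Frac}(R_\infty)$ satisfying a monic relation with coefficients in $R_\infty$, approximates $x$, $y$, and the finitely many coefficients, obtains monic integrality relations over $R_p$ for almost all $p$, uses the normality of each such $R_p$ to conclude $x_p/y_p \in R_p$, and then takes the ultraproduct to place $x/y$ in $R_\infty$.

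The main obstacle is in (2) $\Rightarrow$ (3): $R_\infty$ is not Noetherian, so Serre's criterion is unavailable and one must fall back on the classical definition of normal domain. The delicate point is to check that a monic integrality relation over $R_\infty$ can be descended consistently to relations over almost all $R_p$ with mutually compatible representatives $x_p/y_p$ inside $\operatorname{Frac}(R_p)$, so that ``$x_p/y_p \in R_p$ for almost all $p$'' really does produce an element of $R_\infty$ representing $x/y$. This is precisely where the finite character of approximations (only finitely many coefficients are involved in a monic relation) is indispensable.
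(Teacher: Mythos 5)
The paper does not actually prove this proposition; it is imported verbatim from \cite[Proposition 3.9]{TY}, so there is no in-text argument to compare against. Judged on its own, your outline follows the standard transfer strategy and most of it is sound: the Jacobian witness for $(R_1)$, the domain-plus-integral-closure argument for $(2)\Rightarrow(3)$ (the ``delicate point'' you flag there is in fact automatic, since every element of $R_\infty=\ulim_p R_p$ is by definition an ultraproduct and \L o\'{s}'s theorem transfers the finitely many coefficients of a monic relation all at once), and the descent $(3)\Rightarrow(1)$ --- provided you argue the latter elementarily via $yR_\infty\cap R=yR$, rather than quoting faithfully flat descent of normality for Noetherian rings, since $R_\infty$ is not Noetherian.

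The genuine gap is in your transfer of $(S_2)$. You propose to check depth $\geq 2$ ``at primes of height $\geq 2$'' by transferring length-two regular sequences using item (2) of the preceding proposition. That item transfers regular sequences on $R$ itself (or on $R_\infty$), not on localizations $R_\q$ at arbitrary primes; more seriously, the primes of $R_p$ are not exhausted by approximations $\q_p$ of primes $\q\subseteq R$, so a prime-by-prime transfer cannot establish $(S_2)$ for almost all $R_p$, and the condition is not first-order expressible. You need a finite, constructible witness for $(S_2)$, exactly as you used the Jacobian ideal as a witness for $(R_1)$: for instance, present $R$ as a quotient of a regular local ring $A$ and encode the failure of $(S_2)$ in the supports of the finitely generated modules $\operatorname{Ext}^i_A(R,A)$, whose formation and dimensions commute with approximation. (Alternatively, one can bypass Serre's criterion altogether: for an excellent local domain, normality is equivalent to the map $R\to\overline{R}$ into the finite module $\overline{R}$ being an isomorphism, a single finitely presented datum that transfers.) A milder version of the same remark applies to reducedness: the direction ``$R$ reduced $\Rightarrow$ $R_p$ reduced'' is not witnessed by generators of $\sqrt{(0)}=(0)$; it requires the fact that radicals of finitely generated ideals commute with approximation.
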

\begin{defn}
	Let $R$ be a normal local domain essentially of finite type over $\C$ and $\Delta=\sum_i a_i\Delta_i$ a $\Q$-Weil divisor. Assume that $\Delta_i$ are prime divisors and $\p_i$ is a prime ideal associated to $\Delta_i$ for each $i$. Suppose that $\p_{ip}$ is an approximation of $\p_{i}$ and $\Delta_{ip}$ is a divisor associated to $\p_{ip}$. We say $\Delta_p:=\sum_i a_i\Delta_{ip}$ is an {\it approximation of $\Delta$}.
\end{defn}
\begin{rem}
	If $\Delta$ is an effective integral divisor, then this definition is compatible with Definition \ref{approximations} by \cite[Theorem 4.4]{affine}. Hence, if $\Delta$ is $\Q$-Cartier, then $\Delta_p$ is $\Q$-Cartier for almost all $p$.
\end{rem}
Lastly, we review some singularities introduced by Schoutens via ultraproducts.
\begin{defn}[{\cite[Definition 5.2]{affine},\cite[Definition 3.1]{Puresubrings}}]
	Suppose that $R$ is a finitely generated $\C$-algebra or a local domain essentially of finite type over $\C$. Let $I\subseteq R$ be an ideal. The {\it generic tight closure $I^{*\gen}$ of $I$} is defined by
	\[
	I^{*\gen}=(\ulim_p I_p)^*\cap R.
	\] 
\end{defn}
\begin{rem}
	The generic tight closure $I^{*\gen}$ of $I$ does not depend on the choice of approximation of $I$ since any two approximations are almost equal.
\end{rem}
\begin{defn}[{\cite[Definition 4.1, Remark 4.7]{Puresubrings},\cite[Definition 4.3]{canonical BCM}}]\label{def of generically}
	Suppose that $R$ is a finitely generated $\C$-algebra or a local ring essentially of finite type over $\C$.
	\begin{enumerate}
		\item $R$ is said to be {\it weakly generically $F$-regular} if $I^{*\gen}=I$ for any ideal $I\subseteq R$.
		\item $R$ is said to be {\it generically $F$-regular} if $R_\p$ is weakly generically $F$-regular for any prime ideal $\p\in \Spec R$.
		\item Let $R$ be a local ring essentially of finite type over $\C$. $R$ is said to be {\it generically $F$-rational} if $I^{*\gen}=I$ for some ideal $I$ generated by a system of parameters.
	\end{enumerate}
\end{defn}
\begin{prop}[{\cite[Theorem 4.3]{Puresubrings}}]
	If $R$ is generically $F$-rational, then $I^{*\gen}=I$ for any ideal $I$ generated by part of a system of parameters.
\end{prop}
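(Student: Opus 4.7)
The strategy is to deduce the claim from the classical theorem in positive characteristic that, for an excellent equidimensional local domain of characteristic $p$, the triviality of the tight closure of a single parameter ideal already forces $F$-rationality. The plan is to transfer the hypothesis on $R$ through to its approximations $R_p$.

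First, I would establish colon capturing for generic tight closure: if $x_1, \ldots, x_k$ is part of a system of parameters in $R$, then
\[
(x_1, \ldots, x_{k-1}) :_R x_k \subseteq (x_1, \ldots, x_{k-1})^{*\gen}.
\]
By the proposition on approximations of systems of parameters, $x_{1p}, \ldots, x_{kp}$ form part of a system of parameters in the excellent equidimensional local domain $R_p$ for almost all $p$. The classical colon capturing theorem in characteristic $p$ then gives $(x_{1p}, \ldots, x_{k-1,p}) :_{R_p} x_{k,p} \subseteq (x_{1p}, \ldots, x_{k-1,p})^*$ for almost all $p$; taking ultraproducts and intersecting with $R$ yields the desired inclusion.

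The heart of the argument is to show that $R_p$ is $F$-rational for almost all $p$. Let $y_1, \ldots, y_d$ be a system of parameters witnessing generic $F$-rationality, so $(y_1, \ldots, y_d)^{*\gen} = (y_1, \ldots, y_d)$. The goal is to propagate this into $(y_{1p}, \ldots, y_{dp})^* = (y_{1p}, \ldots, y_{dp})$ in $R_p$ for almost all $p$; the classical characteristic-$p$ theorem will then deliver full $F$-rationality of $R_p$. This is the main obstacle, since the generic hypothesis only controls elements of $R$, whereas $F$-rationality of $R_p$ concerns all elements of $R_p$. To bridge the gap, I would use that $(y_p)^*/(y_p)$ is a finite-length submodule of the Artinian module $R_p/(y_p)$, whose length stabilizes to $\ell_R(R/(y))$ for almost all $p$ by \L o\'{s} applied to a composition series of $R/(y)$. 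If the strict containment $(y_p) \subsetneq (y_p)^*$ held for ultra-many $p$, one would extract witnesses $c_p \in (y_p)^* \setminus (y_p)$; a length/composition-factor comparison involving the faithfully flat injection $R/(y) \hookrightarrow R_\infty/(y)R_\infty = \ulim_p R_p/(y_p)$ forces such a witness to be representable by an element of $R$, producing $c \in R \cap \ulim_p (y_p)^* \setminus (y_1, \ldots, y_d)$ and contradicting the hypothesis.

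Once $R_p$ is $F$-rational for almost all $p$, the conclusion is immediate. For any part of a system of parameters $x_1, \ldots, x_k$ in $R$, the approximations $x_{1p}, \ldots, x_{kp}$ form part of a system of parameters in $R_p$, so $(x_{1p}, \ldots, x_{kp})^* = (x_{1p}, \ldots, x_{kp})$ by $F$-rationality. Hence
\[
(x_1, \ldots, x_k)^{*\gen} = \ulim_p (x_{1p}, \ldots, x_{kp}) \cap R = (x_1, \ldots, x_k) R_\infty \cap R = (x_1, \ldots, x_k),
\]
the final equality following from faithful flatness of $R \to R_\infty$.
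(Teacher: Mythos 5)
The paper does not actually prove this proposition; it is quoted verbatim from Schoutens (\cite[Theorem 4.3]{Puresubrings}), so there is no in-paper argument to compare against. Your proposal reconstructs what is essentially the standard route, and it is in substance correct: the real content is that generic $F$-rationality for a single system of parameters forces $R_p$ to be $F$-rational for almost all $p$ (this is exactly the hard implication of Proposition \ref{generically F-rational rational}, which the present paper also only cites), after which the conclusion follows from the classical fact that in an excellent equidimensional local domain $F$-rationality makes every partial parameter ideal tightly closed, together with $\ulim_p I_p = IR_\infty$ and faithful flatness of $R\to R_\infty$. Two remarks. First, your opening colon-capturing step for generic tight closure is never used in the rest of the argument and can be deleted. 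Second, the length argument in the key step is stated too vaguely; the clean way to say it is that for a parameter ideal $(y)$ the natural injection $R/(y)\hookrightarrow R_\infty/(y)R_\infty$ is an isomorphism, because $R_\infty/\m R_\infty\cong R/\m\cong\C$ and flatness give $\ell_R(R_\infty/(y)R_\infty)=\ell_R(R/(y))<\infty$. Granting that, every class in $\ulim_p (y_p)^*/(y_p)$ is represented by an element of $R$, which then lies in $(y)^{*\gen}=(y)$, contradicting the choice of witnesses $c_p\in (y_p)^*\setminus (y_p)$ on a large set of primes. With that step made precise, the argument is complete and agrees with the proof in the cited source.
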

\begin{prop}[{\cite[Theorem 6.2]{Puresubrings},\cite[Proposition 4.5, Theorem 4.12]{canonical BCM}}]\label{generically F-rational rational}
	If $R$ is generically $F$-rational if and only if $R_p$ is $F$-rational for almost all $p$ if and only if $R$ has rational singularities.
\end{prop}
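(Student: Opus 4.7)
The plan is to prove the triple equivalence by splitting into (1)$\Leftrightarrow$(2), a purely ultraproduct-theoretic statement, and (2)$\Leftrightarrow$(3), which rests on the Smith--Hara characterization of rational singularities via $F$-rational type combined with a comparison between reduction mod $p$ and the ultraproduct approximation. For (2)$\Rightarrow$(1), I would pick a system of parameters $\x=x_1,\dots,x_d$ of $R$; by the preceding approximation proposition, $\x_p$ is a system of parameters of $R_p$ for almost all $p$, so by hypothesis $(\x_p)^{*}=(\x_p)$ for almost all $p$. Taking ultraproducts and using exactness of $\ulim_p$ gives $\ulim_p(\x_p)^{*}=\ulim_p(\x_p)=(\x)R_\infty$, and intersecting with $R$ via faithful flatness of $R\to R_\infty$ yields $(\x)^{*\gen}=(\x)$.

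For (1)$\Rightarrow$(2), I would first use the preceding proposition to upgrade $(\x)^{*\gen}=(\x)$ to $(x_1,\dots,x_i)^{*\gen}=(x_1,\dots,x_i)$ for every $i\le d$. These colon-capturing identities imply Cohen-Macaulayness of $R$ by a standard tight-closure argument, and hence Cohen-Macaulayness of $R_p$ for almost all $p$ via the approximation proposition. In the excellent Cohen-Macaulay local characteristic-$p$ setting, $F$-rationality is equivalent to the tight closure of a single parameter ideal being trivial, so it suffices to check $(\x_p)^{*}=(\x_p)$ for almost all $p$. I would establish this via uniform test-element theory: a test element $c\in R^\circ$ approximates to test elements $c_p\in R_p^\circ$, and the $q$-indexed tight-closure relation transfers between $R$ and its approximations by \L o\'{s}'s theorem, letting the generic identity $(\x)^{*\gen}=(\x)$ produce $(\x_p)^{*}=(\x_p)$ for almost all $p$.

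For (2)$\Leftrightarrow$(3), I would invoke the Smith--Hara theorem: for $R$ essentially of finite type over $\C$, having rational singularities is equivalent to being of $F$-rational type, meaning an arithmetic model of $R$ over some finitely generated $\Z$-subalgebra has $F$-rational fibers on a dense subset of closed points in sufficiently large characteristic. The ultraproduct approximations $R_p$ arise from such a model by specialization, and any non-principal ultrafilter on primes samples a ``large'' set of characteristics, so ``$F$-rational type'' translates into ``$R_p$ is $F$-rational for almost all $p$'' after unpacking the comparison between the two approximation frameworks.

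The main obstacle is direction (1)$\Rightarrow$(2): the generic tight closure $(\x)^{*\gen}$ a priori only controls $\ulim_p(\x_p)^{*}\cap R$, not each individual closure $(\x_p)^{*}$, so bridging this gap requires uniform control on the $q$-bounds in the tight-closure definition via a uniform test element before \L o\'{s}'s theorem can be applied to pass back to the individual approximations.
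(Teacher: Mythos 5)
The paper offers no proof of this proposition---it is quoted from Schoutens---so your attempt has to be judged on its own terms. Your decomposition is reasonable, and two of its three pieces are sound: (2)$\Rightarrow$(1) is the easy ultraproduct computation you describe, and (2)$\Leftrightarrow$(3) is the Hara--Smith correspondence combined with the standard comparison between approximations and reductions modulo $p\gg 0$ (Section 5 of the paper, or \cite[Lemma 4.10]{canonical BCM}). The genuine gap is in (1)$\Rightarrow$(2), precisely at the point you yourself flag as the main obstacle; the fix you propose does not work. The hypothesis $(\x)^{*\gen}=(\x)$ only constrains $\bigl(\ulim_p(\x_p)^{*}\bigr)\cap R$. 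If $(\x_p)^{*}\neq(\x_p)$ for almost all $p$, you get an element $z=\ulim_p z_p$ of $\ulim_p(\x_p)^{*}\setminus(\x)R_\infty$, but $z$ lives in $R_\infty$, not in $R$. A uniform test element $c$ with $c_p$ a test element of $R_p$ lets you rewrite each $(\x_p)^{*}$, and \L o\'{s}'s theorem transfers the Frobenius relations into statements about $z$ inside $R_\infty$ (this is exactly Schoutens' non-standard tight closure), but none of this descends $z$ to an element of $R$, which is what the hypothesis requires. So the step ``the generic identity produces $(\x_p)^{*}=(\x_p)$ for almost all $p$'' is unproved as written.

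Two genuine repairs exist. (a) A socle descent: having shown that $R$, hence almost all $R_p$, is Cohen--Macaulay, choose the witness $z_p$ inside the socle, i.e.\ $z_p\in(\x_p)^{*}\cap((\x_p):\m_p)\setminus(\x_p)$. Since colon ideals of finitely generated ideals commute with approximation, $z\in((\x):\m)R_\infty$; and because $R\to R_\infty$ is flat local with the \emph{same} residue field $\C$, the natural map $((\x):\m)/(\x)\to((\x):\m)R_\infty/(\x)R_\infty$ is an isomorphism, so $z\equiv u \pmod{(\x)R_\infty}$ for some $u\in((\x):\m)\setminus(\x)$ lying in $R$. Then $u\in\ulim_p(\x_p)^{*}\cap R=(\x)^{*\gen}=(\x)$, a contradiction. (b) The route of the cited sources: prove (1)$\Rightarrow$(3) directly (generically $F$-rational implies pseudo-rational, hence rational singularities, by transporting Smith's local-cohomological argument through the ultraproduct, \cite[Theorem 6.2]{Puresubrings}), and then get (3)$\Rightarrow$(2) from Hara's theorem. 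Either way, the missing ingredient is a descent mechanism from $R_\infty$ back to $R$, not uniform control of test elements.
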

\begin{defn}[{\cite[3.2]{log-terminal}}]
	Let $R$ be a local ring essentially of finite type over $\C$ and $R_p$ be an approximation.
	Let $\epsilon:=\ulim_p e_p \in {^*\N}$. Then an {\it ultra-Frobenius $F^\epsilon:R\to R_\infty$ associated to $\epsilon$} is defined by $x\mapsto \ulim_p (F_p^{e_p}(x_p))$, where $F_p$ is a Frobenius morphism in characteristic $p$.
\end{defn}
\begin{defn}[{\cite[Definition 3.3]{log-terminal}}]\label{def of ultra-F-regular}
	Let $R$ be a local domain essentially of finite type over $\C$. $R$ is said to be {\it ultra-$F$-regular} if, for each $c\in R^{\circ}$, there exists $\epsilon \in {^*\N}$ such that 
	\[
	R \xrightarrow{cF^\epsilon} R_\infty
	\]
	is pure.
\end{defn}
\begin{prop}[{\cite[Theorem A]{log-terminal}}]\label{ultra-F-regular log-terminal}
	Let $R$ be a  $\Q$-Gorenstein normal local domain essentially of finite type over $\C$. Then $R$ is ultra-$F$-regular if and only if $R$ has log-terminal singularities.
\end{prop}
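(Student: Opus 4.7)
The plan is to bridge ultra-$F$-regularity and log-terminal singularities through the approximations $R_p$, with the intermediate condition being strong $F$-regularity of $R_p$ for almost all $p$. The second leg of this bridge is the classical Hara-Watanabe-Yoshida correspondence (reduction of singularities modulo $p \gg 0$), which asserts that a $\Q$-Gorenstein normal local $\C$-algebra $R$ is log-terminal if and only if $R_p$ is strongly $F$-regular for almost all $p$. The heart of the matter is therefore the first leg: showing that ultra-$F$-regularity of $R$ is equivalent to strong $F$-regularity of $R_p$ for almost all $p$.

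For the easier direction, suppose almost all $R_p$ are strongly $F$-regular. Given $c \in R^\circ$ with approximation $(c_p)$, strong $F$-regularity supplies, for almost all $p$, an integer $e_p \ge 0$ and an $R_p$-linear splitting of $c_p F_p^{e_p}: R_p \to R_p$. Setting $\epsilon = \ulim_p e_p \in {}^*\N$ and assembling these splittings into an ultraproduct map produces a retraction of $cF^\epsilon: R \to R_\infty$, which in particular forces $cF^\epsilon$ to be pure. For the harder direction, one passes through the ultra-test ideal of \cite{TY}: ultra-$F$-regularity is equivalent to the ultra-test ideal of $R$ being the unit ideal, and unpacking this via approximations shows that the annihilator of the tight closure of the top local cohomology of $\omega_{R_p}$ is $R_p$ for almost all $p$, that is, $R_p$ is strongly $F$-regular.

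The main obstacle is that purity is not a first-order property in the language of rings, so \L o\'{s}'s theorem does not apply directly to transfer between purity of $cF^\epsilon$ and splitting of $c_p F_p^{e_p}$. My workaround is the standard Matlis-dual reformulation: purity of $cF^\epsilon$ as an $R$-module map is equivalent to injectivity of the induced map on $H_\m^d(\omega_R)$. Approximations of $H_{\m_p}^d(\omega_{R_p})$ are finitely generated $R_p$-modules whose injectivity behavior transfers termwise through the exact functor $\ulim_p$, and in characteristic $p > 0$, injectivity here is Matlis dual to the desired splitting of $c_p F_p^{e_p}$ for the $F$-finite $\Q$-Gorenstein approximations. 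This closes the transfer in both directions, completing the equivalence.
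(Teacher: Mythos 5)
First, a remark on the comparison you asked for: the paper does not prove this proposition at all --- it is quoted verbatim as Theorem A of \cite{log-terminal} --- so your proposal is being measured against Schoutens' original argument rather than anything internal to this paper. Your reduction of the statement to the two equivalences ``ultra-$F$-regular $\Leftrightarrow$ $R_p$ strongly $F$-regular for almost all $p$'' and ``$R_p$ strongly $F$-regular for almost all $p$ $\Leftrightarrow$ $R$ log-terminal'' is a reasonable skeleton, and two of the three implications you need are sound: the second equivalence follows from Hara's theorem once approximations are identified with reductions modulo $p\gg 0$ (this is Proposition \ref{multiplier approximation} with $\Delta=0$, $\ba=R$), and the implication ``almost all $R_p$ strongly $F$-regular $\Rightarrow$ ultra-$F$-regular'' by assembling splittings of $c_pF_p^{e_p}$ through $\ulim_p$ is exactly how Schoutens argues.

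The gap is in the remaining implication, ultra-$F$-regular $\Rightarrow$ $R_p$ strongly $F$-regular for almost all $p$, which you dispatch in one sentence. A minor issue first: ultra-$F$-regularity provides an $\epsilon$ only for each $c\in R^\circ$, while strong $F$-regularity of $R_p$ quantifies over all of $R_p^\circ$, most of whose elements are not approximations of elements of $R$; this is repairable via the single-test-element criterion (fix $c$ with $R[1/c]$ regular), but must be said. The serious problem is your claim that the Matlis-dual transfer ``closes in both directions.'' Exactness of $\ulim_p$ gives you: if $E_p\to F_{p*}^{e_p}R_p\otimes_{R_p}E_p$ is injective for almost all $p$, then the ultraproduct of these maps is injective. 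What you need is the reverse, starting from injectivity of $E\to R_\infty\otimes_R E$ with $E\cong H_\m^d(\omega_R)$. The only bridge between the characteristic-zero map and the ultraproduct of the characteristic-$p$ maps is the comparison morphism $H_\m^d(\omega_R)\to\ulim_p H_{\m_p}^d(\omega_{R_p})$, and the paper explicitly notes at the end of Section 3 that the injectivity of such comparison maps is unknown. Concretely: if $c_pF_p^{e_p}$ failed to split for almost all $p$, the witnessing nonzero kernel element lives in $\ulim_p E_p$ and need not come from $E$, so no contradiction with purity of $cF^\epsilon$ results. This is precisely where Schoutens must invoke a log resolution and vanishing theorems (and where the present paper, in Lemma \ref{lemma of Main thm} and Theorem \ref{Main Theorem}, routes the analogous transfer through the cohomology of a log resolution rather than through $E$ directly). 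Your fallback --- ``ultra-$F$-regularity is equivalent to the ultra-test ideal being the unit ideal, and unpacking this via approximations'' --- is not a proof but a citation of \cite{TY}, whose relevant theorems are themselves refinements of the statement you are trying to prove; the phrase ``unpacking via approximations'' is exactly the missing argument.
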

\subsection {Relative hulls}
In this subsection we introduce the concept of relative hulls and approximations of schemes, cohomologies, etc. We refer the reader to \cite{affine}, \cite{log-terminal}, \cite{Puresubrings}.
\begin{defn}[cf. \cite{Puresubrings}]
	Let $R$ be a local ring essentially of finite type over $\C$. Suppose that $X$ is a finite tuple of indeterminates and $f\in R[X]$ is a polynomial such that $f=\sum_{\nu} a_\nu X^\nu$, where $\nu$ is a multi-index. If $a_{\nu p}$ is an approximation of $a_{\nu}$ for each $\nu$, then the sequence of polynomials $f_p:=\sum_\nu a_{\nu p} X^\nu$ is said to be an {\it $R$-approximation of $f$}.
	If $I:=(f_1,\dots,f_s)$ is an ideal in $R[X]$, then we call $I_{p}:=(f_{1p},\dots,f_{sp})R_p[X]$ an {\it $R$-approximation} of $I$, and if $S=R[X]/I$, then we call $S_p:=R_p[X]/I_p$ an {\it $R$-approximation} of $S$.
\end{defn}
\begin{rem}
	Any two $R$-approximations of a polynomial $f$ are almost equal. Similarly, any two $R$-approximations of an ideal $I$ are almost equal.
\end{rem}
\begin{defn}[cf. \cite{Puresubrings}]
	Let $S$ be a finitely generated $R$-algebra and $S_p$ an $R$-approximation of $S$, then we call $S_\infty=\ulim _p S_p$ the {\it (relative) $R$-hull of $S$}.
\end{defn}
\begin{defn}[cf. \cite{log-terminal}]
	If $X$ is an affine scheme $\Spec S$ of finite type over $\Spec R$, then we call $X_p:=\Spec S_p$ is an {\it $R$-approximation of $X$}.
\end{defn}
\begin{defn}[cf. \cite{log-terminal}]
	Suppose that $f:Y\to X$ is a morphism of affine schemes of finite type over $\Spec R$. If $X=\Spec S, Y=\Spec T$ and $\varphi:S\to T$ is the morphism corresponding to $f$, then we call $f_p:Y_p\to X_p$ is an {\it $R$-approximation of $f$}, where $f_p$ is a morphism of $R_p$-schemes induced by an $R$-approximation $\varphi_p:S_p\to T_p$.
\end{defn}
\begin{defn}[cf. \cite{log-terminal}]
	Let $S$ be a finitely generated $R$-algebra and $M$ a finitely generated $S$-module. Write $M$ as the cokernel of a matrix $A$, i.e., given by an exact sequence
	\[
	S^m\xrightarrow{A} S^n\to M\to 0,
	\]
	where $m,n$ are positive integers.
	Let $A_p$ be an $R$-approximation of $A$ defined by entrywise $R$-approximations. Then the cokernel $M_p$ of the matrix $A_p$ is called an {\it $R$-approximation of $M$} and the ultraproduct $M_\infty :=\ulim_p M_p$ is called the {\it $R$-hull of $M$}. $M_\infty$ is independent of the choice of the matrix $A$ and $M_\infty\cong M\otimes_S S_\infty$. 
\end{defn}
\begin{rem}
	If $M$ is not finitely generated, then we cannot define an $R$-approximation of $M$ in this way. It is crucial that any two $R$-approximations of $A$ is equal for almost all $p$.
\end{rem}
\begin{defn}[\cite{log-terminal}]
	Let $X$ be a scheme of finite type over $\Spec R$. Let $\mathfrak{U}=\{U_i\}$ is a finite affine open covering of $X$ and $U_{ip}$ be an $R$-approximation of $U_i$. Gluing $\{U_{ip}\}$ together, we obtain a scheme $X_p$ of finite type over $\Spec R_p$. We call $X_p$ an {\it $R$-approximation of $X$}.
\end{defn}
\begin{rem}
	Suppose that $\{U_{ijk}\}_k$ is a finite affine open covering of $U_i\cap U_j$ and $\phi_{ijk}:\sO_{U_i}|_{U_k}\cong \sO_{U_j}|_{U_k}$ are isomorphisms. Then $R$-approximations $\phi_p:\sO_{U_{ip}}|_{U_{kp}}\to \sO_{U_{jp}}|_{U_{kp}}$ are isomorphisms for almost all $p$ (note that indices $ijk$ are finitely many). Hence, we can glue these together. For any other choice of finite affine open covering $\mathfrak{U}'$ of $X$ , the resulting $R$-approximation $X'_p$ is isomorphic to $X_p$ for almost all $p$.
\end{rem}
\begin{defn}[cf. \cite{log-terminal}]
	Suppose that $f:Y\to X$ is a morphism between schemes of finite type over $\Spec R$. Let $\mathfrak{U}$, $\mathfrak{V}$ be finite affine open coverings of $X$ and $Y$ respectively such that for any $V\in \mathfrak{V}$, there exists some $U\in\mathfrak{U}$ such that $f(V)\subseteq U$. Let $\mathfrak{U}_p$, $\mathfrak{V}_p$ be $R$-approximations of $\mathfrak{U}$, $\mathfrak{V}$ and $(f|_V)_p$ an {\it $R$-approximation of $f|_V$}. We define an $R$-approximation $f_p$ of $f$ by the morphism determined by $(f|V)_p$.
\end{defn}
\begin{rem}
	In the same way as the above Remark, $(f|_V)_p$ and $(f|_{V'})_p$ agree on $V\cap V'$ for any two opens $V,V'\in \mathfrak{V}$ for almost all $p$.
\end{rem}
\begin{defn}[cf. \cite{log-terminal}]
	Let $X$ be a scheme of finite type over $\Spec R$ and $\mathcal{F}$ a coherent $\sO_X$-module. Let $\mathfrak{U}$ be a finite affine open covering of $X$. For any $U\in \mathfrak{U}$, we have an $R$-approximation $M_{Up}$ of $M_{U}$ such that $M_{U}$ is a finitely generated $\sO_U$-module and $\tilde{M_U}\cong \mathcal{F}|_U$. We define an {\it $R$ -approximation $\mathcal{F}_p$ of $\mathcal{F}$} by the coherent $\sO_{X_p}$-module determined by $\tilde{M_{Up}}$.
\end{defn}
\begin{defn}[cf. \cite{log-terminal}]
	Let $X$ be a separated scheme of finite type over $\Spec R$ and $\mathcal{F}$ a coherent $\sO_X$-module. Then the ultra-cohomology of $\mathcal{F}$ is defined by
	 \[
	 	H_\infty^i(X,\mathcal{F}):=\ulim_p H^i(X_p,\mathcal{F}_p).
	 \]
\end{defn}
\begin{rem}
	In the above setting, let $\mathfrak{U}=\{U_i\}_{i=1,\dots,n}$ be a finite affine open covering of $X$, let 
	\[
	C^j(\mathfrak{U}, \mathcal{F}):=\prod_{i_0<\dots<i_j}\mathcal{F}(U_{i_0\dots i_j}),
	\]
	where $U_{i_0\dots i_j}:=U_{i_0}\cap \dots \cap U_{i_j}$, and let
	\[
	(C^j(\mathfrak{U},\mathcal{F}))_p:=\prod_{i_0\dots i_j}(\mathcal{F}(U_{i_0\dots i_j}))_p,
	\]
	where $\mathcal{F}(U_{i_0\dots i_j})_p$ is an $R$-approximation considered as $\sO(U_{i_0\dots i_j})$-module. Then
	\[
	(C^j(\mathfrak{U},\mathcal{F}))p
	\]
	coincides with the $j$-th term of the \v{C}ech complex of $X_p$, $\mathfrak{U}_p$ and $\mathcal{F}_p$. We have a commutative diagram
	\[
	\xymatrix{
	C^{j-1}(\mathfrak{U},\mathcal{F}) \ar[r] \ar[d] & C^j(\mathfrak{U},\mathcal{F}) \ar[r] \ar[d] & C^{j+1}(\mathfrak{U},\mathcal{F}) \ar[d] \\
	\ulim_p (C^{j-1}(\mathfrak{U},\mathcal{F}))_p \ar[r] & \ulim_p (C^j(\mathfrak{U},\mathcal{F}))_p \ar[r] & \ulim_p(C^{j+1}(\mathfrak{U},\mathcal{F}))_p.
	}
	\]
	Since $\ulim_p (\mathchar`-)$ is an exact functor, we have
	\[
	\check{H}^j(\mathfrak{U},\mathcal{F})\to \ulim_p \check{H}^j(\mathfrak{U}_p,\mathcal{F}_p).
	\]
	If $X$ is separated, then $X_p$ is separated for almost all $p$. This can be checked by taking a finite affine open covering and observing that if the diagonal morphism $\Delta_{X/\Spec R}$ is a closed immersion, then $\Delta_{X_p/\Spec R_p}$ is also a closed immersion for almost all $p$. Hence, we have the map
	\[
	H^j(\mathfrak{U},\mathcal{F})\to \ulim_p H^j(\mathfrak{U}_p,\mathcal{F}_p).
	\]
	Note that we do not know whether this map is  injective or not.
\end{rem}
\begin{prop}
	Let $R$ be a local ring essentially of finite type over $\C$ of dimension $d$, $\x=x_1,\dots,x_d$ a system of parameters and $M$ a finitely generated $R$-module. Then we have a natural homomorphism $H_\m^d(M)\to \ulim_p H_{\m_p}^d(M_p)$.
\end{prop}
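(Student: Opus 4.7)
The plan is to realize both local cohomologies as top cohomologies of \v{C}ech complexes on the systems of parameters $\x$ and $\x_p$, to build a termwise morphism of complexes using the $R$-approximation machinery of Section 3.3, and then to pass to $H^d$ using the exactness of $\ulim_p$ recalled in Section 3.1.

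Since $\x$ is a system of parameters, the ideal $(\x)$ is $\m$-primary, so $H^d_\m(M)$ is the top cohomology of the \v{C}ech complex $\check{C}^\bullet(\x;M)$ whose $k$-th term is $\bigoplus_{i_1<\cdots<i_k}M_{x_{i_1}\cdots x_{i_k}}$. By the proposition on approximations of systems of parameters proved just above, $\x_p=x_{1p},\dots,x_{dp}$ is a system of parameters of $R_p$ for almost all $p$, so $H^d_{\m_p}(M_p)$ is likewise computed by $\check{C}^\bullet(\x_p;M_p)$. For a subset $I=\{i_1<\cdots<i_k\}$, write $x_I=x_{i_1}\cdots x_{i_k}$; the localization $M_{x_I}$ is a finitely generated module over the finitely presented $R$-algebra $R[1/x_I]\cong R[T]/(x_IT-1)$, and its $R$-approximation is $(M_p)_{x_{Ip}}$. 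This yields a canonical $R$-linear map $M_{x_I}\to\ulim_p(M_p)_{x_{Ip}}$ factoring through the relative $R$-hull, and because the \v{C}ech differentials are assembled from canonical localization maps they commute with these termwise maps. We thus obtain a morphism of complexes
\[
\check{C}^\bullet(\x;M)\longrightarrow\ulim_p\check{C}^\bullet(\x_p;M_p),
\]
and applying $H^d$ together with the exactness of $\ulim_p$, which gives $H^d(\ulim_p\check{C}^\bullet(\x_p;M_p))=\ulim_p H^d_{\m_p}(M_p)$, produces the desired natural homomorphism.

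The only subtlety lies in verifying that the $R$-approximation of the single-element localization $R[1/x_I]$ is indeed $R_p[1/x_{Ip}]$ and that approximation intertwines the localization structure maps; this follows directly from the definitions in Section 3.3, since $R[1/x_I]=R[T]/(x_IT-1)$ is finitely presented over $R$ and $R$-approximation operates entrywise on defining relations. Everything else is formal \v{C}ech-complex functoriality, so I do not anticipate a genuine obstacle. It is worth noting, however, that the resulting map need not be injective in general, exactly as with the analogous \v{C}ech cohomology map on a separated scheme discussed in the previous remark.
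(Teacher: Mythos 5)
Your proposal is correct and follows essentially the same route as the paper: both identify the $R$-approximation of each localization $M_{x_I}$ with $(M_p)_{x_{Ip}}$, assemble the resulting canonical maps into a morphism compatible with the \v{C}ech differentials, and use the exactness of $\ulim_p$ to identify the target in top degree with $\ulim_p H^d_{\m_p}(M_p)$. The only cosmetic difference is that the paper works just with the last differential of the \v{C}ech complex (realizing $H^d_\m(M)$ as the cokernel of $\bigoplus_i M_{x_1\cdots \hat{x_i}\cdots x_d}\to M_{x_1\cdots x_d}$) rather than the full complex.
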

\begin{proof}
	Since $M_{x_1\cdots \hat{x_i}\cdots x_d}$ is a finitely generated $R_{x_1\cdots \hat{x_i}\cdots x_d}$-module and $M_{x_1\cdots x_d}$ is a finitely generated $R_{x_1\cdots x_d}$-module, we have an $R$-approximation $(M_{x_1\cdots\hat{x_1}\cdots x_d})_p\cong (M_p)_{x_{1p}\cdots \hat{x_{ip}}\cdots x_{dp}}$ and $(M_{x_1\cdots x_d})_p\cong (M_p)_{x_{1p}\cdots x_{dp}}$ for almost all $p$. We have a commutative diagram
	\[
	\xymatrix{
		\bigoplus_i M_{x_1\cdots \hat{x_i}\cdots x_d} \ar[r] \ar[d] & M_{x_1\cdots x_d} \ar[d] \\
		\bigoplus_i \ulim_p(M_p)_{x_{1p}\cdots \hat{x_{ip}}\cdots x_{dp}} \ar[r] & \ulim_p (M_p)_{x_{1p}\cdots x_{dp}}
	}.
	\]
	Taking the cokernel of rows, we have the desired map.
\end{proof}
\begin{rem}
	We do not know whether $H_\m^d(M)\to \ulim_p H_{\m_p}^d(M_p)$ is injective or not.
\end{rem}
\begin{prop}
	Let $R$ be a local ring essentially of finite type over $\C$ of dimension $d$, $\x=x_,\dots,x_d$ a system of parameters and $M_p$ an $R_p$-module for almost all $p$. Then we have a natural homomorphism $H_\m^d(\ulim_p M_p)\to \ulim_p H_\m^d(M_p)$.
\end{prop}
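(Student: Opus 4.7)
The plan is to mimic the proof of the previous proposition, computing both sides by the \v{C}ech complex on $\x=x_1,\dots,x_d$ and its approximations, and then constructing a natural comparison map of \v{C}ech complexes that descends to a map on the top cohomology.

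First I would use the fact that $\m=\sqrt{(x_1,\dots,x_d)}$ (since $\x$ is a system of parameters for $R$), so that for \emph{any} $R$-module $N$ the local cohomology $H_\m^d(N)$ agrees with $H_{(\x)}^d(N)$ and may be computed as the cokernel of
\[
	\bigoplus_{i=1}^d N_{x_1\cdots\widehat{x_i}\cdots x_d} \longrightarrow N_{x_1\cdots x_d}.
\]
Applying this to $N=M_\infty:=\ulim_p M_p$ computes the left-hand side. On the other side, since $\x_p$ is a system of parameters of $R_p$ for almost all $p$ (by the properties of approximations recalled earlier), $\m_p=\sqrt{(x_{1p},\dots,x_{dp})}$ for almost all $p$, so $H_{\m_p}^d(M_p)$ is the cokernel of the analogous map built from $\x_p$; by the exactness of $\ulim_p$ proved earlier, the cokernel commutes with the ultraproduct, so $\ulim_p H_{\m_p}^d(M_p)$ is the cokernel of
\[
	\bigoplus_{i=1}^d \ulim_p (M_p)_{x_{1p}\cdots\widehat{x_{ip}}\cdots x_{dp}} \longrightarrow \ulim_p (M_p)_{x_{1p}\cdots x_{dp}}.
\]

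Next I would construct, for every nonempty subset $I\subseteq\{1,\dots,d\}$, a natural localization map
\[
	\lambda_I:(M_\infty)_{\prod_{i\in I} x_i}\longrightarrow \ulim_p (M_p)_{\prod_{i\in I} x_{ip}},\qquad \frac{\ulim_p m_p}{(\prod_{i\in I}x_i)^n}\longmapsto \ulim_p\!\left(\frac{m_p}{(\prod_{i\in I}x_{ip})^n}\right).
\]
Well-definedness is where \L o\'s's theorem does the work: if two representatives agree in $(M_\infty)_{\prod x_i}$, then there is $k\ge 0$ with $(\prod x_i)^k\cdot((\prod x_i)^{n'}m-(\prod x_i)^{n}m')=0$ in $M_\infty$; the corresponding equality then holds in $M_p$ for almost all $p$, so the right-hand images coincide almost everywhere. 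Compatibility of $\lambda_I$ with the \v{C}ech differentials is immediate from the explicit formulas, since restriction maps between localizations are determined by multiplication by the extra variables.

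Finally, the maps $\lambda_I$ assemble into a morphism of two-term complexes, and passing to cokernels produces the desired natural homomorphism $H_\m^d(M_\infty)\to\ulim_p H_{\m_p}^d(M_p)$. The main obstacle is conceptual rather than computational: localization is a filtered colimit, and filtered colimits do not in general commute with ultraproducts, so one must not expect the vertical arrows $\lambda_I$ to be isomorphisms. All one gets is a natural comparison, and verifying its well-definedness (and independence of the chosen representatives $m_p$ of $\ulim_p m_p$) is precisely the place where \L o\'s's theorem is essential.
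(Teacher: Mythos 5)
Your proposal is correct and takes essentially the same approach as the paper: the paper also computes both sides as cokernels of the top piece of the \v{C}ech complex on $\x$ and its approximations, and obtains the map by passing to cokernels of a morphism of two-term complexes. Your extra verifications (well-definedness of the localization comparison maps via \L o\'s's theorem, and the caveat that these maps need not be isomorphisms since localization does not commute with ultraproducts) are accurate elaborations of details the paper leaves implicit.
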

\begin{proof}
	We have a commutative diagram
	\[
	\xymatrix{
		\bigoplus_i (\ulim_p M_p)_{x_1\cdots \hat{x_i}\cdots x_d} \ar[r] \ar[d] & (\ulim_p M_p)_{x_1\cdots x_d} \ar[d] \\
		\bigoplus_i \ulim_p(M_p)_{x_{1p}\cdots \hat{x_{ip}}\cdots x_{dp}} \ar[r] & \ulim_p (M_p)_{x_{1p}\cdots x_{dp}}
	}.	
	\]
\end{proof}
Taking the cokernel of rows, we have the desired map.
\section{Big Cohen-Macaulay algebras constructed via ultraproducts}
In \cite{canonical BCM}, Schoutens constructed the canonical big Cohen-Macaulay algebra in characteristic zero. Following the idea of \cite{canonical BCM}, we will deal with big Cohen-Macaulay algebras constructed via ultraproducts in slightly general settings.
In this section, suppose that $(R,\m)$ is a local domain essentially of finite type over $\C$ and $R_p$ is an approximation of $R$. 
\begin{defn}[{\cite[Section 2]{canonical BCM}}]
	Suppose that $R$ is a local domain essentially of finite type over $\C$. Then we define the {\it canonical big Cohen-Macaulay algebra $\mathcal{B}(R)$ of $R$} by
	\[\mathcal{B}(R):=\ulim_p R_p^{+}.
	\]
\end{defn}

\begin{setting}\label{setting BCM}
	Let $R$ be a local domain essentially of finite type over $\C$ of dimension $d$ and let $B_p$ be a big Cohen-Macaulay ${R_p}^+$-algebra for almost all $p$. We use $B$ to denote $\ulim_p B_p$.
\end{setting}
\begin{rem}
	By Theorem \ref{absolute integral closure p>0}, we can set $B_p=R_p^+$ and $B=\mathcal{B}(R)$ in Setting \ref{setting BCM}.
\end{rem}
\begin{prop}
	$\mathcal{B}(R)$ is a domain over $R^+$-algebra.
\end{prop}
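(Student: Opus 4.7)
The plan is to establish two things separately: that $\mathcal{B}(R)$ is a domain, and that it naturally carries an $R^+$-algebra structure lying over the canonical $R$-algebra map $R\to\mathcal{B}(R)$.

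For the domain property I would simply invoke \L o\'{s}'s theorem. Each $R_p^+$, as the absolute integral closure of the domain $R_p$ inside a fixed algebraic closure of $\operatorname{Frac}(R_p)$, is itself a domain. The property of being a domain is expressible by the first-order sentence $\forall x\,\forall y\,(xy=0\to(x=0\lor y=0))$ in the language $\mathcal{L}$ of rings, as in Example \ref{formulas example}(2); by Theorem \ref{Los's theorem} it transfers to the ultraproduct $\mathcal{B}(R)=\ulim_p R_p^+$.

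For the $R^+$-algebra structure, the composite $R\to R_\infty=\ulim_p R_p\to\ulim_p R_p^+=\mathcal{B}(R)$ induced by $R_p\hookrightarrow R_p^+$ already furnishes an $R$-algebra structure. To upgrade this to an $R^+$-algebra structure, I would write $R^+$ as the directed union of its finite integral $R$-subalgebras $S_\lambda\subseteq R^+$. For each generator $s\in S_\lambda$ satisfying a monic relation $f(s)=0$ with $f\in R[T]$, the approximation $f_p\in R_p[T]$ has all of its roots in $R_p^+$; choosing such a root $s_p$ for almost every $p$, the element $\widetilde{s}:=\ulim_p s_p\in\mathcal{B}(R)$ satisfies $f(\widetilde{s})=0$. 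Since $\mathcal{B}(R)$ is a domain (proved above), the minimal polynomial of $\widetilde{s}$ over $R$ divides $f$, so assembling these choices yields a ring map $S_\lambda\to\mathcal{B}(R)$ over $R$; passing to the colimit produces the desired homomorphism $R^+\to\mathcal{B}(R)$.

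The main obstacle is making these local choices of roots \emph{coherent} as $\lambda$ varies, so that the inclusions $S_\lambda\subseteq S_\mu$ induce commuting squares under the resulting maps to $\mathcal{B}(R)$. I would handle this by fixing, via Zorn's lemma, a compatible system of embeddings of a chosen algebraic closure of $\operatorname{Frac}(R)$ into an algebraic closure of $\operatorname{Frac}(R_p)$ for almost every $p$; restricting to finite integral $R$-subalgebras of $R^+$ and passing to ultraproducts then produces a coherent family $\{S_\lambda\to\mathcal{B}(R)\}$ and hence the required homomorphism $R^+\to\mathcal{B}(R)$.
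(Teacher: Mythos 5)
Your first step (the domain property via \L o\'{s}) is exactly what the paper does. The second step, however, has a genuine gap in its key mechanism. You correctly identify that the real difficulty is making the choices of roots coherent across the directed system of finite subalgebras $S_\lambda\subseteq R^+$ (choosing roots of the individual minimal polynomials of generators independently does not by itself define a ring homomorphism $S_\lambda\to\mathcal{B}(R)$, let alone a compatible system of them). But the fix you propose --- ``a compatible system of embeddings of a chosen algebraic closure of $\operatorname{Frac}(R)$ into an algebraic closure of $\operatorname{Frac}(R_p)$ for almost every $p$'' --- is impossible: $\operatorname{Frac}(R)$ has characteristic zero while $\operatorname{Frac}(R_p)$ has characteristic $p$, and field homomorphisms preserve characteristic. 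The object into which $\overline{\operatorname{Frac}(R)}$ \emph{can} be embedded is the single ultraproduct $\ulim_p\overline{\operatorname{Frac}(R_p)}$ (an algebraically closed field of characteristic zero containing $R_\infty\supseteq R$), not the individual factors; as written, your coherence argument does not get off the ground.

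The paper sidesteps the coherence problem entirely by proving a stronger statement: $\mathcal{B}(R)$ is itself absolutely integrally closed. Given any monic $f=\sum a_nx^n\in\mathcal{B}(R)[x]$ (coefficients in $\mathcal{B}(R)$, not merely in $R$), one approximates the coefficients, notes that $f_p$ is monic over $R_p^+$ and hence has a root $c_p\in R_p^+$, and concludes by \L o\'{s} that $\ulim_p c_p$ is a root of $f$. An absolutely integrally closed domain containing $R$ automatically contains a copy of $R^+$ (take the integral closure of $R$ inside it; its fraction field contains an algebraic closure of $\operatorname{Frac}(R)$), so the $R^+$-algebra structure comes for free, with no element-by-element choices to reconcile. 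I would recommend replacing your second and third paragraphs with this argument.
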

\begin{proof}
	By \L o\'{s}'s theorem, $\mathcal{B}(R)$ is a domain over $R_\infty=\ulim_p R_p$. Hence, $\mathcal{B}(R)$ is an $R$-algebra. Let $f=\sum a_{n} x^n\in \mathcal{B}(R)[x]$ be a monic polynomial in one variable over $\mathcal{B}(R)$ and let $f_p=\sum a_{np}x^n$ be an approximation of $f$. Since $f_p$ is a monic polynomial for almost all $p$ and $R_p^+$ is absolutely integrally closed, $f_p$ has a root $c_p$ in $R_p^+$ for almost all $p$. Hence, $c:=\ulim_p c_p\in \mathcal{B}(R)$ is a root of $f$ by \L o\'{s}'s theorem. Hence, $\mathcal{B}(R)$ is absolutely integrally closed. In particular, $\mathcal{B}(R)$ contains an absolute integral closure $R^+$ of $R$.
\end{proof}
\begin{cor}
	In Setting \ref{setting BCM}, $B$ is an $R^+$-algebra.
\end{cor}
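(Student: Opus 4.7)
The plan is essentially immediate from the preceding proposition. Since for almost all $p$ the ring $B_p$ comes equipped with a structural homomorphism $R_p^+ \to B_p$ making it an $R_p^+$-algebra, I would first apply Definition-Proposition \ref{ultraprocut of maps} to form the ultraproduct of these structural maps, obtaining a ring homomorphism
\[
	\ulim_p R_p^+ \longrightarrow \ulim_p B_p,
\]
that is, a ring homomorphism $\mathcal{B}(R) \to B$.

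Next, I would invoke the proposition just proved, which asserts that $\mathcal{B}(R)$ contains $R^+$ as a subring. Composing the inclusion $R^+ \hookrightarrow \mathcal{B}(R)$ with the homomorphism $\mathcal{B}(R) \to B$ produces the desired $R^+$-algebra structure on $B$. One should briefly note compatibility: the map $R \to B$ obtained this way agrees with the natural one coming from $R \to R_\infty \to B$, since both factor through the ultraproduct of the composites $R_p \to R_p^+ \to B_p$ and \L o\'{s}'s theorem ensures that taking ultraproducts commutes with composition of ring homomorphisms.

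There is essentially no obstacle here; the content was already packaged in the previous proposition, and this corollary is a purely formal consequence of the functoriality of the ultraproduct construction on rings together with the absolute integral closedness of $\mathcal{B}(R)$. The only point requiring any care is to keep track of the fact that the $R^+$-action on $B$ obtained in this way is canonical (independent of presentations), which again follows because any two choices of approximations of the structural map $R \to R^+$ agree almost everywhere.
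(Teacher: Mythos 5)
Your argument is correct and is exactly the paper's (very terse) proof, spelled out: take the ultraproduct of the structure maps $R_p^+\to B_p$ to get $\mathcal{B}(R)\to B$, then compose with the inclusion $R^+\hookrightarrow\mathcal{B}(R)$ supplied by the preceding proposition. Nothing further is needed.
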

\begin{proof}
	Since $B_p$ is an $R_p^+$-algebra for almost all $p$, $B$ is an $R^+$-algebra by the above proposition.
\end{proof}
\begin{prop}
	In Setting \ref{setting BCM}, $B$ is a big Cohen-Macaulay $R$-algebra.
\end{prop}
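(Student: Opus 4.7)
The plan is to verify directly that an arbitrary system of parameters $\x = x_1, \ldots, x_d$ of $R$ is a regular sequence on $B$ and that $(x_1, \ldots, x_d)B \ne B$. The key input is that each $B_p$ is already big Cohen-Macaulay over $R_p^+$, hence over $R_p$, so the analogous statements hold pointwise after approximation; the real task is to descend them to the ultraproduct, and the tool to do so is the exactness of $\ulim_p$ recorded earlier.

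First I would fix an approximation $\x_p = x_{1p}, \ldots, x_{dp}$ of $\x$. By the proposition already stated in Section 3 on approximations of systems of parameters, $\x_p$ is a system of parameters of $R_p$ for almost all $p$. Since $B_p$ is a big Cohen-Macaulay $R_p^+$-algebra, hence a big Cohen-Macaulay $R_p$-algebra, $\x_p$ is a regular sequence on $B_p$ with $(x_{1p}, \ldots, x_{dp})B_p \ne B_p$ for almost all $p$.

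Next, for each $i = 1, \ldots, d$, I would take the ultraproduct of the right-exact sequence
\[
B_p^{\oplus (i-1)} \xrightarrow{(x_{1p}, \ldots, x_{i-1,p})} B_p \longrightarrow B_p/(x_{1p}, \ldots, x_{i-1,p})B_p \longrightarrow 0.
\]
Because $\ulim_p$ is exact on abelian groups and trivially commutes with finite direct sums, this yields a canonical identification
\[
\ulim_p B_p/(x_{1p}, \ldots, x_{i-1,p})B_p \;\cong\; B/(x_1, \ldots, x_{i-1})B.
\]
Multiplication by $x_{ip}$ on the $p$-th term is injective for almost all $p$; exactness of $\ulim_p$ transfers this to injectivity of multiplication by $x_i$ on $B/(x_1, \ldots, x_{i-1})B$, which is exactly the regular-sequence condition.

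Finally, to rule out $(x_1, \ldots, x_d)B = B$, I would pick $b_p \in B_p \setminus (x_{1p}, \ldots, x_{dp})B_p$ for almost all $p$, which is possible since $B_p$ is big Cohen-Macaulay over $R_p$. Any putative relation $\ulim_p b_p = \sum_{i=1}^d x_i \ulim_p c_{ip}$ in $B$ would, after unwinding the equivalence-class definition of $\ulim_p$ (equivalently, by \L o\'s's theorem), force $b_p \in (x_{1p}, \ldots, x_{dp})B_p$ for almost all $p$, a contradiction. The whole argument is routine once the exactness of $\ulim_p$ is in hand; the only point that deserves a second glance is the compatibility of ultraproducts with \emph{finite} direct sums of the $B_p$, which is harmless, whereas infinite direct sums would not be preserved.
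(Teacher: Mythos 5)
Your proof is correct and takes essentially the same route as the paper's: both reduce the regular-sequence condition on $B$ to that on the $B_p$ by transferring membership in the finitely generated ideals $(x_1,\dots,x_{i-1})B$ between the ultraproduct and almost all components, using that $\x_p$ is a system of parameters of $R_p$ for almost all $p$. The paper argues by contradiction with an explicit witness $y$ and its approximations $y_p$, whereas you phrase the same transfer directly via exactness of $\ulim_p$ and the identification $\ulim_p B_p/(x_{1p},\dots,x_{i-1,p})B_p\cong B/(x_1,\dots,x_{i-1})B$; the difference is purely presentational.
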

\begin{proof}
	Assume that $B$ is not a big Cohen-Macaulay $R$-algebra. Since $B_p\neq \m_p B_p$ for almost all $p$, we have $B\neq \m B$. Hence, there exists part of system of parameters $x_1,\dots,x_i$ of $R$ such that $(x_1,\dots,x_{i-1})B\subsetneq (x_1,\dots,x_{i-1})B:_B x_i$. Then there exists $y\in B$ such that $x_i y\in(x_1,\dots,x_{i-1})B$ and $y\notin (x_1,\dots,x_{i-1})B$. Taking approximations, we have $x_{ip}y_p\in (x_{1p},\dots,x_{(i-1)p})B_p$ and $y_p\notin (x_{1p}\dots,x_{(i-1)p})B_p$ for almost all $p$. Since $x_{1p},\dots,x_{ip}$ is part of a system of parameters of $R_p$ and $B_p$ is a big Cohen-Macaulay $R_p$-algebra for almost all $p$, $x_{1p},\dots,x_{ip}$ is a regular sequence for almost all $p$. This is a contradiction. Therefore, $B$ is a big Cohen-Macaulay $R$-algebra.
\end{proof}
\begin{lem}\label{injectivity local cohomology}
	In Setting \ref{setting BCM}, the natural homomorphism $H_\m^d(B)\to \ulim_p H_{\m_p}^d(B_p)$ is injective.
\end{lem}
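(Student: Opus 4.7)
The plan is to pass to the Čech / direct-limit description of local cohomology and use the big Cohen-Macaulay property on both sides. Fix a system of parameters $\x = x_1,\dots,x_d$ of $R$; by the earlier proposition, $\x_p = x_{1p},\dots,x_{dp}$ is a system of parameters of $R_p$ for almost all $p$, and by assumption each $B_p$ is big Cohen-Macaulay with respect to $\x_p$, while $B$ is big Cohen-Macaulay with respect to $\x$ by the previous proposition. The natural map in question is the one induced on cokernels of the rows of the diagram recalled just before the lemma, so concretely it sends a class represented by $b/(x_1\cdots x_d)^t$ to $\ulim_p \bigl[b_p/(x_{1p}\cdots x_{dp})^t\bigr]$, where $b = \ulim_p b_p$ is any approximation of $b \in B = \ulim_p B_p$.

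Let $\eta \in H_\m^d(B)$ be a class that maps to zero, and write $\eta = [b/(x_1\cdots x_d)^t]$ with $b = \ulim_p b_p$. The image being zero in $\ulim_p H_{\m_p}^d(B_p)$ means that $[b_p/(x_{1p}\cdots x_{dp})^t] = 0$ in $H_{\m_p}^d(B_p)$ for almost all $p$. Since $\x_p$ is a regular sequence on $B_p$, so is every power $x_{1p}^t,\dots,x_{dp}^t$, and therefore the natural map
\[
B_p/(x_{1p}^t,\dots,x_{dp}^t)B_p \longrightarrow H_{\m_p}^d(B_p) = \varinjlim_s B_p/(x_{1p}^s,\dots,x_{dp}^s)B_p
\]
is injective (the transition maps, multiplication by $x_{1p}\cdots x_{dp}$, are injective by the regular sequence property). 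Thus, for almost all $p$, we may pick $c_{1p},\dots,c_{dp} \in B_p$ with $b_p = \sum_{j=1}^d x_{jp}^t c_{jp}$.

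Setting $c_j := \ulim_p c_{jp} \in B$ and applying the ring operations termwise under $\ulim_p$, we obtain $b = \ulim_p b_p = \sum_{j=1}^d x_j^t c_j \in (x_1^t,\dots,x_d^t)B$. Since $B$ is big Cohen-Macaulay with respect to $\x$, the same argument as above (injectivity of $B/(x_1^t,\dots,x_d^t)B \to H_\m^d(B)$) gives $\eta = [b/(x_1\cdots x_d)^t] = 0$, proving injectivity. The only mildly delicate point is verifying that the description of the natural map at the level of representatives agrees with the cokernel construction, but this is immediate from the shape of the commutative square; everything else is a routine application of Łoś's theorem combined with the clean description of local cohomology available when the module is Cohen-Macaulay with respect to the chosen parameters.
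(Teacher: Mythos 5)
Your argument is correct and is essentially the paper's own proof: both reduce the vanishing of the image in $\ulim_p H_{\m_p}^d(B_p)$ to $b_p\in(x_{1p}^t,\dots,x_{dp}^t)B_p$ for almost all $p$ via the regular-sequence (colon-capturing) property of $\x_p$ on $B_p$, pass to the ultraproduct to get $b\in(x_1^t,\dots,x_d^t)B$, and conclude using that $\x$ is a regular sequence on $B$. The only cosmetic difference is that you phrase the key step as injectivity of $B_p/(x_{1p}^t,\dots,x_{dp}^t)B_p\to H_{\m_p}^d(B_p)$, while the paper writes out the equivalent colon computation $x_p^{s_p}z_p\in(x_{1p}^{s_p+t},\dots,x_{dp}^{s_p+t})B_p\Rightarrow z_p\in(x_{1p}^t,\dots,x_{dp}^t)B_p$.
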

\begin{proof}
	Let $x=x_1\cdots x_d$ be the product of a system of parameters and $\lbrack \frac{z}{x^t}\rbrack$ be an element of $H_\m^d(B)$ such that the image in $\ulim_p H_{\m_p}^d(B_p)$ is zero. Then there exists $s_p\in\N$ such that $x^{s_p}z\in (x_{1p}^{s_p+t},\dots,x_{dp}^{s_p+t})B_p$ for almost all $p$. Since $B_p$ is a big Cohen-Macaulay $R_p$-algebra for almost all $p$, $z\in (x_{1p}^t,\dots,x_{dp}^t)B_p$ for almost all $p$. Hence, $z\in (x_1^t,\dots,x_d^t)B$ and $\lbrack \frac{z}{x^t} \rbrack=0$ in $H_\m^d(B)$.
\end{proof}
We generalize \cite[Theorem 4.2]{canonical BCM} to the cases other than the canonical big Cohen-Macaulay algebra.
\begin{prop}[{cf. \cite[Theorem 4.2]{canonical BCM}, \cite[Proposition 3.7]{Ma-Schwede}}]\label{rational and BCM rational}
	In Setting \ref{setting BCM}, $R$ is $\text{BCM}_B$-rational if and only if $R$ has rational singularities. In particular, $R$ has rational singularities if $R$ is BCM-rational.
\end{prop}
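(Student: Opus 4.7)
The plan is to reduce both directions to the positive characteristic analogue via the approximations $R_p$ and $B_p$, combining Proposition \ref{generically F-rational rational} (which identifies rational singularities with $F$-rationality of $R_p$ for almost all $p$) with the char-$p$ analogue \cite[Proposition 3.7]{Ma-Schwede} ($R_p$ is $F$-rational if and only if it is $\text{BCM}_{B_p}$-rational, for an arbitrary BCM $R_p^+$-algebra $B_p$). In both directions $R$ will be Cohen-Macaulay (rational singularities implies CM; BCM-rationality includes CM by definition), and the injectivity of $H_\m^d(R)\to H_\m^d(B)$ is then equivalent to $IB\cap R=I$ for every parameter ideal $I\subseteq R$.

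For the forward direction, assume $R$ is $\text{BCM}_B$-rational. I would fix a system of parameters $x_1,\dots,x_d$ with approximations $x_{1p},\dots,x_{dp}$, set $I=(x_1,\dots,x_d)$ and $I_p=(x_{1p},\dots,x_{dp})$, and verify the generic $F$-rationality $I^{*\gen}=I$. Given $r\in I^{*\gen}$, we have $r_p\in I_p^*$ for almost all $p$; Smith's plus-closure theorem gives $I_p^*\subseteq I_pR_p^+\subseteq I_pB_p$, using that $B_p$ is an $R_p^+$-algebra. Taking ultraproducts yields $r=\sum_i x_ib_i\in IB\cap R=I$. By Proposition \ref{generically F-rational rational}, $R$ has rational singularities.

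For the backward direction, assume $R$ has rational singularities. Proposition \ref{generically F-rational rational} and \cite[Proposition 3.7]{Ma-Schwede} then give that $H_{\m_p}^d(R_p)\to H_{\m_p}^d(B_p)$ is injective for almost all $p$, and by exactness of ultraproducts, $\ulim_p H_{\m_p}^d(R_p)\to\ulim_p H_{\m_p}^d(B_p)$ is injective. I would next check that $H_\m^d(R)\to\ulim_p H_{\m_p}^d(R_p)$ is injective: since $R$ is CM, an element $[r/(x_1\cdots x_d)^n]$ in the kernel satisfies $r_p\in(x_{1p}^n,\dots,x_{dp}^n)R_p$ for almost all $p$, so $r\in(x_1^n,\dots,x_d^n)R_\infty\cap R=(x_1^n,\dots,x_d^n)$ by faithful flatness of $R\to R_\infty$. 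Composing, the natural map $H_\m^d(R)\to\ulim_p H_{\m_p}^d(B_p)$ is injective, and this composite factors through $H_\m^d(B)\to\ulim_p H_{\m_p}^d(B_p)$, which is injective by Lemma \ref{injectivity local cohomology}. Hence $H_\m^d(R)\to H_\m^d(B)$ is injective, so $R$ is $\text{BCM}_B$-rational.

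The ``in particular'' statement follows by taking $B=\B(R)$ (which fits Setting \ref{setting BCM} with $B_p=R_p^+$) and invoking the main equivalence. The main obstacle lies in the backward direction, specifically in the use of \cite[Proposition 3.7]{Ma-Schwede}: Smith's theorem alone yields only $I_pR_p^+\cap R_p=I_p$ in characteristic $p$, which handles $B=\B(R)$ (recovering Schoutens' \cite[Theorem 4.2]{canonical BCM}) but not a general BCM $R_p^+$-algebra, since there is no direct route from $I_pR_p^+\cap R_p=I_p$ to $I_pB_p\cap R_p=I_p$ when $B_p$ strictly contains $R_p^+$.
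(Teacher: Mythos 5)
Your argument is correct and takes essentially the same route as the paper: both directions reduce to the characteristic-$p$ statement via approximations, using Smith's theorem together with the injectivity of $H_\m^d(B)\to\ulim_p H_{\m_p}^d(B_p)$ for one implication, and the injectivity of $\ulim_p H_{\m_p}^d(R_p)\to\ulim_p H_{\m_p}^d(B_p)$ plus Cohen--Macaulayness of $R_p$ and faithful flatness of $R\to R_\infty$ for the other. The ``obstacle'' you flag at the end is not a gap: the paper invokes \cite[Proposition 3.5]{Ma-Schwede} precisely for the statement that $F$-rationality of $R_p$ forces $H_{\m_p}^d(R_p)\to H_{\m_p}^d(B_p)$ to be injective for an \emph{arbitrary} big Cohen--Macaulay $R_p^+$-algebra $B_p$, the mechanism there being not Smith's plus-closure theorem but the fact that the kernel of the map to any big Cohen--Macaulay algebra is contained in the tight closure of zero in $H_{\m_p}^d(R_p)$ (solidity/equational-lemma arguments), which vanishes by $F$-rationality; Smith's theorem is only needed in the other direction, where you, like the paper, use it correctly.
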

\begin{proof}
	Let $x:=x_1\cdots x_d$ is the product of a system of parameters. Suppose that $R$ has rational singularities. By \cite[Proposition 4.11]{canonical BCM} and \cite{Hara}, $R_p$ is $F$-rational for almost all $p$. Let $\eta:=\lbrack \frac{z}{x^t} \rbrack$ be an element of $H_\m^d(R)$ such that $\eta =0$ in $H_\m^d(B)$. Then we have a commutative diagram
	\[
	\xymatrix{
		H_\m^d(R) \ar[r] \ar[d] & \ulim_p H_{\m_p}^d(R_p) \ar[d] \\
		H_\m^d(B) \ar[r] & \ulim_p H_{\m_p}^d(B_p).
	}
	\]
	By \cite[Proposition 3.5]{Ma-Schwede}, $H_{\m_p}^d(R_p)\to H_{\m_p}^d(B_p)$ is injective for almost all $p$. Hence, $\ulim_p H_{\m_p}^d(R_p)\to \ulim_p H_{\m_p}^d(B_p)$ is injective. Therefore, $\lbrack\frac{z_p}{x_p^t}\rbrack=0$ in $H_{\m_p}^d(R_p)$ for almost all $p$. Since $R_p$ is Cohen-Macaulay for almost all $p$, we have $z_p\in (x_{1p}^t,\dots,x_{dp}^t)$ for almost all $p$. Hence, $z\in (x_1^t,\dots,x_d^t)$ by \L o\'{s}'s theorem. Therefore, $H_\m^d(R)\to H_\m^d(B)$ is injective.
	Conversely, suppose that $R$ is $\text{BCM}_B$-rational. Let $I=(x_1,\dots,x_d)$ be an ideal generated by the system of parameters. Let $z\in I^{*\gen}$. Since $I_p^*\subseteq I_pB_p\cap R_p$ by \cite[Theorem 5.1]{plus closure} for almost all $p$, we have $\lbrack\frac{z_p}{x_p}\rbrack=0$ in $H_{\m_p}^d(B_p)$ for almost all $p$. Since $H_\m^d(B)\to \ulim_p H_{\m_p}^d(B_p)$ and $H_\m^d(R)\to H_\m^d(B)$ are injective, we have $\lbrack\frac{z}{x}\rbrack=0$ in $H_\m^d(R)$. Since $R$ is Cohen-Macaulay, $z\in I$. Therefore, $R$ is generically $F$-rational. By Proposition \ref{generically F-rational rational} (see \cite[Theorem 6.2]{Puresubrings}), $R$ has rational singularities.
\end{proof}
\section{Approximations of multiplier ideals}
In this section, we will explain the relation between approximations and reductions modulo $p\gg 0$.
 Note that an isomorphism $\ulim_p \bar{\F_p}\cong \C$ is fixed.
 \begin{defn}
 	Let $R$ be a finitely generated $\C$-algebra. A pair $(A,R_A)$ is called a {\it model of $R$} if the following two conditions hold:
 	\begin{enumerate}[(i)]
 		\item $A \subseteq \C$ is a finitely generated $\Z$-subalgebra.
 		\item $R_A$ is a finitely generated $A$-algebra such that $R_A\otimes_A\C\cong R$.
 	\end{enumerate}
 \end{defn}
\begin{prop}[{\cite[Lemma 4.10]{canonical BCM}}]\label{model of the field}
	Let $A$ be a finitely generated $\Z$-subalgebra of $\C$. There exists a family $(\gamma_p)_p$ which satisfies the following two conditions:
	\begin{enumerate}[(i)]
		\item $\gamma_p:A\to \bar{\F_p}$ is a ring homomorphism for almost all $p$.
		\item For any $x\in A$, $x=\ulim_p\gamma_p(x)$.
	\end{enumerate}
\end{prop}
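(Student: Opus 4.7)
The plan is to exploit the fixed isomorphism $\C\cong\ulim_p\bar{\F_p}$ by lifting a $\Z$-algebra presentation of $A$ to the ultraproduct level. First I would fix generators $a_1,\ldots,a_n$ of $A$ as a $\Z$-algebra, yielding a surjection $\Z[X_1,\ldots,X_n]\to A$ with kernel an ideal $I$. Using the isomorphism, for each $i$ I choose a representing sequence $b_{ip}\in\bar{\F_p}$ with $a_i=\ulim_p b_{ip}$ in $\C$. For each prime $p$ this determines a ring homomorphism $\tilde{\gamma}_p:\Z[X_1,\ldots,X_n]\to\bar{\F_p}$ by $X_i\mapsto b_{ip}$ (the image of $\Z$ being forced by characteristic).

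The next step is to verify that $\tilde{\gamma}_p$ kills $I$ for almost all $p$, so that it descends to the desired map $\gamma_p:A\to\bar{\F_p}$. Here I would use the Noetherianity of $\Z[X_1,\ldots,X_n]$ (Hilbert's basis theorem) to reduce to checking finitely many generators $f_1,\ldots,f_m$ of $I$. For each $f_j$, the relation $f_j(a_1,\ldots,a_n)=0$ in $\C$ translates, via Definition-Proposition \ref{ultraprocut of maps}, into $\ulim_p f_j(b_{1p},\ldots,b_{np})=0$ in $\ulim_p\bar{\F_p}$; hence $f_j(b_{1p},\ldots,b_{np})=0$ in $\bar{\F_p}$ for almost all $p$. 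Since only finitely many $f_j$ are involved, the intersection of the corresponding large sets remains in the ultrafilter, producing a single large set of primes on which every generator of $I$ is killed. On this set $\tilde{\gamma}_p$ factors through $A$, yielding $\gamma_p$.

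Finally, condition (ii) is an immediate computation: for $x\in A$ write $x=g(a_1,\ldots,a_n)$ with $g\in\Z[X_1,\ldots,X_n]$; then $\gamma_p(x)=g(b_{1p},\ldots,b_{np})$ for almost all $p$, and Definition-Proposition \ref{ultraprocut of maps} yields $\ulim_p\gamma_p(x)=g(\ulim_p b_{1p},\ldots,\ulim_p b_{np})=g(a_1,\ldots,a_n)=x$. The only nontrivial point of the plan is the reduction from all relations in $I$ to a finite generating set via Noetherianity; without this one could not intersect infinitely many ultrafilter members to secure a single set of primes on which the factorization simultaneously holds. Beyond this the argument is a purely formal consequence of \L o\'{s}'s theorem applied to the ring presentation of $A$.
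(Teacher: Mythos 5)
Your argument is correct and complete: presenting $A$ as $\Z[X_1,\dots,X_n]/I$, approximating the generators via the fixed isomorphism $\C\cong\ulim_p\bar{\F_p}$, and using Hilbert's basis theorem plus \L o\'{s}'s theorem to kill the finitely many generators of $I$ on a single member of the ultrafilter is exactly the standard proof of this statement. The paper itself gives no proof (it only cites \cite[Lemma 4.10]{canonical BCM}), and your argument is essentially the one in that reference, so there is nothing further to add.
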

\begin{prop}[{cf. \cite[Corollary 4.10]{canonical BCM}}]\label{model of affine ring}
	Let $R$ be a finitely generated $\C$-algebra and let ${\bf a}=a_1,\dots, a_l$ be finitely many elements of $R$. Let $R_p$ be an approximation of $R$. Then there exists a model $(A,R_A)$ which satisfies the following conditions:
	\begin{enumerate}[(i)]
		\item There exists a family $(\gamma_p)$ as in Proposition \ref{model of the field}.
		\item ${\bf a}\subseteq R_A$.
		\item $R_A\otimes_A \bar{\F_p}\cong R_p$ for almost all $p$.
		\item For any $x\in R_A$, the ultraproduct of the image of $x$ under $\id_{R_A}\otimes_A \gamma_p$ is $x$.
	\end{enumerate}
\end{prop}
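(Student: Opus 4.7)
The plan is to construct the model $(A,R_A)$ explicitly by collecting the coefficients appearing in a chosen presentation of $R$ together with polynomial lifts of the specified elements. First I would fix a presentation $R\cong \C[X_1,\dots,X_n]/I$ with $I=(f_1,\dots,f_s)$ and, for each $j$, choose a lift $g_j\in\C[X_1,\dots,X_n]$ of $a_j$. Writing $f_i=\sum_\nu c_{i\nu}X^\nu$ and $g_j=\sum_\nu d_{j\nu}X^\nu$, I would let $A$ be the finitely generated $\Z$-subalgebra of $\C$ generated by the (finitely many nonzero) coefficients $c_{i\nu}$ and $d_{j\nu}$, and set
\[
R_A:=A[X_1,\dots,X_n]/(f_1,\dots,f_s).
\]
Then $R_A\otimes_A\C\cong R$, so $(A,R_A)$ is a model of $R$; each $a_j$ is the image of $g_j\in R_A$, giving (ii). Applying Proposition \ref{model of the field} to $A$ then produces the family $(\gamma_p)_p$ required by (i).

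For (iii), base-changing along $\gamma_p$ yields
\[
R_A\otimes_A\bar{\F_p}\;\cong\;\bar{\F_p}[X_1,\dots,X_n]\big/\bigl(\gamma_p(f_1),\dots,\gamma_p(f_s)\bigr),
\]
where $\gamma_p(f_i):=\sum_\nu\gamma_p(c_{i\nu})X^\nu$. The given approximation $R_p$ is, by definition, almost equal to $\bar{\F_p}[X_1,\dots,X_n]/(f_{1p},\dots,f_{sp})$ with $f_{ip}=\sum_\nu a_{i\nu p}X^\nu$ and $\ulim_p a_{i\nu p}=c_{i\nu}$. Since also $\ulim_p\gamma_p(c_{i\nu})=c_{i\nu}$ by Proposition \ref{model of the field}(ii), the two sequences $(\gamma_p(c_{i\nu}))_p$ and $(a_{i\nu p})_p$ represent the same element of $\ulim_p\bar{\F_p}\cong\C$ and therefore agree for almost all $p$. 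Hence $\gamma_p(f_i)=f_{ip}$ and $R_A\otimes_A\bar{\F_p}\cong R_p$ for almost all $p$, establishing (iii). For (iv), any $x\in R_A$ lifts to some $h=\sum_\nu e_\nu X^\nu\in A[X_1,\dots,X_n]$, and $\id_{R_A}\otimes\gamma_p$ sends $x$ to the class of $\sum_\nu\gamma_p(e_\nu)X^\nu$; its coefficient-wise ultraproduct in $\C[X_1,\dots,X_n]_\infty$ is $h$, which maps to $x\in R\subseteq R_\infty$.

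The main obstacle is essentially the coefficient-matching in (iii): the ring $A$ must be chosen large enough that every coefficient of the $f_i$ and of the lifts $g_j$ lies in $A$, so that the comparison between $\gamma_p$ applied to a coefficient and the corresponding entry of the given approximation takes place inside $A\subseteq\C$. This is exactly why the statement bundles the choice of ${\bf a}$ together with the model. Once $A$ has been chosen this way, all remaining compatibilities reduce directly to \L o\'{s}'s theorem together with the defining property $\ulim_p\gamma_p(x)=x$ supplied by Proposition \ref{model of the field}.
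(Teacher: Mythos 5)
Your proposal is correct and follows essentially the same route as the paper: choose a model whose coefficient ring contains the coefficients of the defining equations and of lifts of $\bf a$, take $(\gamma_p)$ from Proposition \ref{model of the field}, and observe that $\gamma_p$ applied to the coefficients gives an approximation, hence agrees with the given $R_p$ for almost all $p$. The only cosmetic difference is that you build $A$ explicitly from the coefficients, whereas the paper starts from an arbitrary model containing $\bf a$ and enlarges it.
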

\begin{proof}
	Let $X=X_1,\dots,X_n$ and $R\cong \C[X]/I$ for some ideal $I\subseteq \C[X]$. Take any model $(A,R_A)$ which contains ${\bf a}$. Enlarging this model, we may assume that there exits an ideal $I_A\subseteq A[X]$ such that $R_A\cong A[X]/I_A$ and $I_A\otimes_A \C=I$ in $\C[X]$. Take $(\gamma_p)$ as in Proposition \ref{model of the field}. Let $I=(f_1,\dots, f_m)$. For $f=\sum_\nu c_\nu X^\nu \in A[X]\subseteq \C[X]$, by the definition of approximations, $f_p:=\sum_\nu \gamma_p(c_\nu)X^\nu \in \bar{\F_p}[X]$ is an approximation of $f$. Hence, by the definition of approximations of finitely generated $\C$-algebras, $R_A\otimes_A\bar{\F_p}\cong \bar{\F_p}[X]/(f_{1p},\dots,f_{mp})\bar{\F_p}[X]$ is an approximation of $R$. Since two approximations are isomorphic for almost all $p$, $R_A\otimes_A\bar{\F_p}\cong R_p$ for almost all $p$. The condition (iv) is clear by the above argument.
\end{proof}
\begin{rem}\label{model of local rings}
	Let $\p =(x_1,\dots,x_n) \subseteq R$ be a prime ideal. Enlarging the model $(A,R_A)$, we may assume that $x_1,\dots,x_n\in R_A$.
	 Let $\mu_p$ be the kernel of $\gamma_p:A\to \bar{\F_p}$. Then this is a maximal ideal of $A$ and $A/\mu_p$ is a finite field. $\p_{\mu_p}=(x_1,\dots,x_n) R_A/\mu_p R_A $ is prime for almost all $p$ since this is a reduction to $p\gg0$. On the other hand, $\p_p:=(x_1,\dots,x_n)R_A\otimes_A\bar{\F_p}\subseteq R_p$ is an approximation of $\p$. Hence, $\p_p$ is prime for almost all $p$. Here, $(R_p)_{\p_p}$ is an approximation of $R_\p$. Thus we have a flat local homomorphism $(R_A/\mu_p R_A)_{\p_{\mu_p}}\to R_p$ with $\p_{\mu_p}R_p=\p_p$. Moreover, if $\p$ is maximal, then $\p_{\mu_p},\p_p$ are maximal for almost all $p$. Then, the map $R_A/\p_{\mu_p}\to R_p/\p_p\cong \bar{\F_p}$ is a separable field extension since $R_A/\p_{\mu_p}$ is a finite field.
\end{rem}
The next result is a generalization of \cite[Theorem 4.6]{TY} from ideal pairs to triples.
\begin{prop}\label{multiplier approximation}
	Let $R$ be a normal local domain essentially of finite type over $\C$, $\Delta\ge 0$ an effective $\Q$-Weil divisor such that $K_R+\Delta$ is $\Q$-Cartier, $\ba$ a nonzero ideal and $t>0$ a real number. Suppose that $R_p$, $\Delta_p$, $\ba_p$ are approximations. Then $\tau(R_p,\Delta_p,\ba_p^t)$ is an approximation of $\mathcal{J}(\Spec R,\Delta,\ba^t)$.
\end{prop}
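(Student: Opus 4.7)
The plan is to descend the entire situation to a finitely generated $\Z$-subalgebra $A\subseteq \C$, apply the Hara-Yoshida identification $\J=\tau$ in positive characteristic, and then transfer the resulting equality from the reduction modulo $\mu_p$ over $A$ to the ultraproduct approximation $R_p$ via the flat local map of Remark \ref{model of local rings}. This mirrors the strategy of \cite[Theorem 4.6]{TY}, enhanced to handle a boundary divisor $\Delta$. Concretely, I would first write $R=S_\p$ for a finitely generated $\C$-algebra $S$ and a prime $\p\in \Spec S$; fix a log resolution $\pi \colon Y\to \Spec S$ of $(\Spec S,\Delta,\ba)$, generators $g_1,\dots,g_k$ of the global multiplier ideal $\J(\Spec S,\Delta,\ba^t)$, generators of $\ba$, the prime ideals defining the components of $\Delta$, and an integer $n$ with $n(K_S+\Delta)$ Cartier; and then use Propositions \ref{model of the field} and \ref{model of affine ring} (enlarging the model further so that $Y$, $\pi$, $K_Y$ and the divisorial data all descend to $A$) to fix a model $(A,S_A)$.

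For almost all $p$, with $\gamma_p\colon A\to \bar{\F_p}$ as in Proposition \ref{model of the field} and $\mu_p=\ker\gamma_p$, generic flatness together with the openness of log resolutions under reduction ensures that $\pi_{\mu_p}$ is still a log resolution of $(\Spec S_{\mu_p},\Delta_{\mu_p},\ba_{\mu_p})$ and that
\[
\J(\Spec S,\Delta,\ba^t)\cdot S_{\mu_p} \;=\; \J(\Spec S_{\mu_p},\Delta_{\mu_p},\ba_{\mu_p}^t).
\]
The Hara-Yoshida theorem (\cite{HY}, \cite{interpret. multiplier}) applied to the $F$-finite pair on $S_{\mu_p}$ identifies the right-hand side with $\tau(S_{\mu_p},\Delta_{\mu_p},\ba_{\mu_p}^t)$, and since test ideals commute with localization I would obtain
\[
\tau\bigl((S_{\mu_p})_{\p_{\mu_p}},\Delta_{\mu_p},\ba_{\mu_p}^t\bigr) \;=\; (g_{1,\mu_p},\dots,g_{k,\mu_p})\cdot (S_{\mu_p})_{\p_{\mu_p}}
\]
for almost all $p$.

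By Remark \ref{model of local rings}, the canonical morphism $(S_{\mu_p})_{\p_{\mu_p}}\to R_p$ is faithfully flat with trivial closed fiber and induces a separable algebraic residue field extension, realized as a filtered colimit of finite étale maps between subfields of $\bar{\F_p}/(A/\mu_p)$. Since test ideals are preserved under this type of base change, I would conclude $\tau(R_p,\Delta_p,\ba_p^t) = (g_{1,p},\dots,g_{k,p})R_p$ for almost all $p$, which by Definition \ref{approximations} is precisely the statement that $\tau(R_p,\Delta_p,\ba_p^t)$ is an approximation of $\J(\Spec R,\Delta,\ba^t)$. The main obstacle will be verifying this last base change step cleanly in the $\Q$-divisor setting: if a direct reference is not at hand, I would instead argue by base-changing the log resolution $\pi_{\mu_p}$ to $\bar{\F_p}$, which remains a log resolution of $(\Spec R_p,\Delta_p,\ba_p)$ after localizing at $\p_p$, and then invoking Hara-Yoshida directly over the $F$-finite ring $R_p$ to identify $\tau(R_p,\Delta_p,\ba_p^t)$ with $\J(\Spec R_p,\Delta_p,\ba_p^t)=(g_{1,p},\dots,g_{k,p})R_p$.
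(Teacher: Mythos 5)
Your proposal follows essentially the same route as the paper: descend to a model $(A,S_A)$, invoke Takagi's theorem that the reduction of the multiplier ideal modulo $p\gg 0$ is the test ideal, and transfer this to the approximation $R_p$ via the faithfully flat local map of Remark \ref{model of local rings} with separable residue field extension (the paper handles your ``main obstacle'' by citing a generalization of \cite[Lemma 1.5]{Srini-Tak} together with \cite[Proposition 3.1]{HT04} for localization). The argument is correct and matches the paper's proof in all essentials.
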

\begin{proof}
	Let $R=S_\p$, where $S$ is a normal domain of finite type over $\C$ and $\p$ is a prime ideal. Let $\m$ be a maximal ideal contains $\p$. Then there exists a model $(A,S_A)$ of $S$ such that the properties in Proposition \ref{model of affine ring} hold and $S_A$ containing a system of generators of $\J(\Spec R,\Delta,\ba^t)$ and $\Delta_A$, $\ba_A$ can be defined properly. Let $\mu_p$ be maximal ideals of $S_A$ as in Remark \ref{model of local rings} and let $\m_{\mu_p}, \p_{\mu_p}$ be reductions to $p\gg 0$. Since, for almost all $p$, $(S_A/\mu_p)_{\m_{\mu_p}}\to (S_\m)_p$ is a flat local homomorphism such that $S_A/\m_{\mu_p}\to (S/\m)_p\cong \bar{\F_p}$ is a separable field extension, we have
	\[
	\tau((S_A/\mu_p)_{\m_{\mu_p}}, \Delta_{(S_A/\mu_p)_{\m_{\mu_p}}},\ba^t_{(S_A/\mu_p)_{\m_{\mu_p}}})(S_\m)_p=\tau((S_\m)_p,\Delta_{\m_p},\ba_{\m_p}^t),
	\]
	by a generalization of \cite[Lemma 1.5]{Srini-Tak}.
	Since the localization commutes with test ideals (\cite[Proposition 3.1]{HT04}), we have
	\[
	\tau((S_A/\mu_p)_{\p_{\mu_p}}, \Delta_{(S_A/\mu_p)_{\p_{\mu_p}}},\ba^t_{(S_A/\mu_p)_{\p_{\mu_p}}})R_p=\tau(R_p,\Delta_p,\ba_p^t)
	\]
	for almost all $p$.
	Since the reduction of multiplier ideals modulo $p\gg 0$ is the test ideal (\cite[Theorem 3.2]{interpret. multiplier}), $\tau((S_A/\mu_p)_{\p_{\mu_p}}, \Delta_{(S_A/\mu_p)_{\p_{\mu_p}}},\ba^t_{(S_A/\mu_p)_{\p_{\mu_p}}})$ is a reduction of 
	\[
	\J(\Spec R,\Delta,\ba^t)
	\]
	 to characteristic $p\gg 0$. Hence, $\tau(R_p,\Delta_p,\ba^t_p)$ is an approximation of $\J(\Spec R,\Delta,\ba^t)$.
\end{proof}

\section{BCM test ideal with respect to a big Cohen-Macaulay algebra constructed via ultraproducts}
Throughout this section, we assume that $(R,\m)$ is a normal local domain essentially of finite type over $\C$. Fix a canonical divisor $K_R$ such that $R\subseteq \omega_R:=R(K_R)\subseteq \operatorname{Frac}(R)$. Let $\Delta \ge 0$ be an effective $\Q$-Weil divisor such that $K_R+\Delta$ is $\Q$-Cartier. Suppose that $\Div f=n(K_R+\Delta)$ for $f\in R^{\circ}$, $n\in \N$. Let $B_p$ be a big Cohen-Macaulay $R_p^+$-algebra for almost all $p$ and $B:=\ulim_p B_p$. We use $\widehat{R}$ to denote the completion of $R$ with respect to $\m$ and $\widehat{\Delta}$ to denote the flat pullback of $\Delta$ by $\Spec \widehat{R}\to\Spec R$.

\begin{prop}\label{multiplier subset BCM-test}
	In the setting as above, we have
	\[
	\J(\widehat{R},\widehat{\Delta})\subseteq \tau_{\widehat{B}}(\widehat{R},\widehat{\Delta}).
	\]
\end{prop}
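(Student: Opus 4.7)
The plan is to reduce the inclusion to an analogous statement in characteristic $p$ via approximations, invoke Proposition~\ref{BCM test tight closure} to identify BCM test ideals with classical test ideals there, and transfer the conclusion back using Lemma~\ref{injectivity local cohomology} together with faithful flatness of $R\to R_\infty$ and Krull's intersection theorem. Since $R\to\widehat{R}$ is faithfully flat, multiplier ideals commute with $\m$-adic completion, and since $H_\m^d$ is unchanged under completion, it suffices to show that any $c\in\J(R,\Delta)$ annihilates the kernel of $\psi\colon H_\m^d(\omega_R)\to H_\m^d(B)$ from Definition~\ref{def of BCM test ideal}.

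I would fix $\eta\in\Ker\psi$ and represent it as a \v{C}ech cocycle $\eta=[z/(x_1\cdots x_d)^t]$ for a system of parameters $\x=x_1,\dots,x_d$ of $R$ and some $z\in\omega_R$. Choosing approximations $c_p$, $z_p$, $x_{ip}$, $B_p$, I set $\eta_p:=[z_p/(x_{1p}\cdots x_{dp})^t]\in H_{\m_p}^d(\omega_{R_p})$. By Proposition~\ref{multiplier approximation}, $c_p\in\tau(R_p,\Delta_p)$ for almost all $p$; by faithful flatness of completion, $c_p\in\tau(\widehat{R_p},\widehat{\Delta_p})$; and by Proposition~\ref{BCM test tight closure}, this equals $\tau_{\widehat{B_p}}(\widehat{R_p},\widehat{\Delta_p})$. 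Simultaneously, $\psi(\eta)=0$ in $H_\m^d(B)$, combined with Lemma~\ref{injectivity local cohomology}, forces $\psi_p(\eta_p)=0$ in $H_{\m_p}^d(B_p)$ for almost all $p$, where $\psi_p$ is the analogous map built from $(\widehat{R_p},\widehat{\Delta_p},\widehat{B_p})$. Combining, $c_p$ annihilates $\eta_p$ in $H_{\m_p}^d(\omega_{R_p})$ for almost all $p$.

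The remaining --- and most delicate --- step is descending this characteristic-$p$ vanishing to a genuine equality $c\eta=0$ in $H_\m^d(\omega_R)$, since the natural map $H_\m^d(\omega_R)\to\ulim_p H_{\m_p}^d(\omega_{R_p})$ is not known to be injective in general. The plan is to unwind the relation $c_p\eta_p=0$ at the level of \v{C}ech representatives: it produces an integer $s_p\ge 0$ with $(x_{1p}\cdots x_{dp})^{s_p}c_pz_p\in(x_{1p}^{t+s_p},\dots,x_{dp}^{t+s_p})\omega_{R_p}$, and if $s_p$ is bounded on some set in the ultrafilter, then \L o\'{s}'s theorem together with faithful flatness of $R\to R_\infty$ and finite generation of $\omega_R$ yields the corresponding relation over $R$ and hence $c\eta=0$. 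Otherwise, I would combine the unboundedness of $s_p$ with Krull's intersection theorem applied to the finitely generated $R$-module $\omega_R$, using faithful flatness of $\omega_R\to\omega_R\otimes_RR_\infty$ to descend. I expect this final descent step --- reconciling the non-Noetherian character of $R_\infty$ with the finite generation of $\omega_R$ --- to be the main technical obstacle.
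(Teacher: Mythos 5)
Your overall strategy---reduce to characteristic $p$ via approximations, use Proposition \ref{BCM test tight closure} to replace the BCM condition by a tight closure condition, and descend by \L o\'{s}'s theorem---is the same as the paper's, and the first two paragraphs are essentially sound (one small point: to get $\psi_p(\eta_p)=0$ for almost all $p$ from $\psi(\eta)=0$ you only need commutativity of the square, not Lemma \ref{injectivity local cohomology}; injectivity is needed for the \emph{reverse} inclusion in Theorem \ref{Main Theorem}). The genuine gap is in your final descent step, and your proposed fallback does not close it. If $c_p\eta_p=0$ in $H_{\m_p}^d(\omega_{R_p})$ is only witnessed at level $s_p$ with $s_p$ unbounded along the ultrafilter, then \L o\'{s} gives a membership $\ulim_p\bigl(x_p^{s_p}c_pz_p\bigr)\in\ulim_p(x_{1p}^{t+s_p},\dots,x_{dp}^{t+s_p})\omega_{R_p}$ in which \emph{both} sides vary with $p$: there is no fixed element of $\omega_R$ and no fixed ideal of $R$ left to which Krull's intersection theorem or faithful flatness of $R\to R_\infty$ could be applied. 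This is exactly the reason one cannot in general descend vanishing in $\ulim_p H_{\m_p}^d(\omega_{R_p})$ to vanishing in $H_\m^d(\omega_R)$, and "unboundedness of $s_p$ plus Krull" is not an argument.

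The paper avoids the unbounded exponent by inserting one more step \emph{before} applying the test element. Fix a divisorial ideal $J\subseteq R$ with an isomorphism $\omega_R\xrightarrow{\cdot g}J$. From $\eta_p=[z_p/x_p^t]\in 0^{*\Delta_p}_{H_{\m_p}^d(\omega_{R_p})}$ one deduces, as in the proof of \cite[Theorem 2.8]{interpret. multiplier}, the \emph{fixed-exponent} tight closure membership
\[
g_pz_px_p^t\in\bigl((x_{1p}^{2t},\dots,x_{dp}^{2t})J_p\bigr)^{*\Delta_p}
\]
for almost all $p$; the uniform bound $2t$ is the whole point, and it comes from the colon-capturing behaviour of tight closure for parameter ideals in top local cohomology, not from anything formal about ultraproducts. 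Only then does one apply $a_p\in\tau(R_p,\Delta_p)$ to land in the honest ideal $(x_{1p}^{2t},\dots,x_{dp}^{2t})J_p$ of the finitely generated module $J_p$, which descends by \L o\'{s} and faithful flatness of $R\to R_\infty$ to $agzx^t\in(x_1^{2t},\dots,x_d^{2t})J$, i.e.\ $[az/x^t]=0$ in $H_\m^d(\omega_R)$. Without this conversion to a bounded-exponent statement in a finitely generated module, your argument stalls exactly where you predicted it would.
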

\begin{proof}
	Consider the following commutative diagram:
	\[
	\xymatrix{	
	0_{H_\m^d(\omega_R)}^{B,K_R+\Delta} \ar[r] \ar[d] & \ulim_p 0_{H_{\m_p}^d(\omega_{R_p})}^{B_p,K_{R_p}+\Delta_p} \ar[d] \\
	H_\m^d(\omega_R) \ar[r] \ar[d]_{\psi} & \ulim_p H_{\m_p}^d(\omega_{R_p}) \ar[d] \\
	H_\m^d(B) \ar[r] & \ulim_p H_{\m_p}^d(B_p) \\
	}.
	\]
	By Proposition \ref{BCM test tight closure}, we have
	\[
	0_{H_{\m_p}^d(\omega_{R_p})}^{B_p,K_{R_p}+\Delta_p}=0_{H_{\m_p}^d(\omega_{R_p})}^{*\Delta_p}
	\]
	for almost all $p$. Let $x_1,\dots,x_d$ be a system of parameters and $x=x_1\cdots x_d$ the product of them.
	Take $a\in\J(R,\Delta)=\ulim_p \tau(R_p,\Delta_p)\cap R$ and $\lbrack \frac{z}{x^t}\rbrack \in 0_{H_{\m}^d(\omega_{R})}^{B,K_R+\Delta}$. Let $J$ be a divisorial ideal which is isomorphic to $\omega_R$ and $g\in R^\circ$ an element such that $\omega_R \xrightarrow{\cdot g} J$ is an isomorphism. As in Proof of \cite[Theorem 2.8]{interpret. multiplier}, we have $g_p z_p x_p^t\in ((x_{1p}^{2t},\dots,x_{dp}^{2t})J_p)^{*\Delta_p}$ for almost all $p$. Hence, $a_p g_p z_p x_p^t\in (x_{1p}^{2t},\dots,x_{dp}^{2t})J_p$ for almost all $p$. Therefore, $agzx^t\in (x_1^{2t},\dots,x_d^{2t})J$ and $\lbrack \frac{az}{x^t} \rbrack =0$ in $H_\m^d(\omega_R)$. Hence, we have $a\in \Ann_R 0_{H_{\m}^d(\omega_{R})}^{B,K_R+\Delta}$. In conclusion, we have $\J (R,\Delta)\widehat{R}\subseteq \tau_{\widehat{B}}(\widehat{R},\widehat{\Delta})$.
\end{proof}
\begin{lem}[{\cite[Theorem 2.13]{interpret. multiplier}}]\label{lemma of Main thm}
	Let $(R,\m)$ be an $F$-finite normal local domain of characteristic $p>0$ and $\Delta\ge 0$ be an effective $\Q$-Weil divisor on $X:=\Spec R$ such that $K_X+\Delta$ is $\Q$-Cartier. Let $f:Y\to X$ be a proper birational morphism with $X$ normal. Suppose that $Z:=f^{-1}(\m)$ and $\delta:H_\m^d(R(K_X))\to H_Z^d(Y,\sO_Y(\lfloor f^{*}(K_X+\Delta)\rfloor)$ is the Matlis dual of the natural inclusion map $H^0(Y,\sO_Y(\lceil K_Y-f^*(K_X+\Delta) \rceil))\hookrightarrow R$. Then $\Ker \delta\subseteq 0_E^{*\Delta}$, where $E$ is the injective hull of the residue field $R/\m$ of $R$.
\end{lem}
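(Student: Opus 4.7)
The plan is to use Matlis--Grothendieck duality to translate the assertion $\Ker\delta\subseteq 0_E^{*\Delta}$ into the dual inclusion of ideals $\tau(R,\Delta)=\Ann_R(0_E^{*\Delta})\subseteq \J(X,\Delta)=\Ann_R(\Ker\delta)$. This is the classical statement that in equal characteristic $p>0$ the (generalized) test ideal is contained in the multiplier ideal, proved by Hara and Yoshida, and my plan is to reproduce the essence of their argument.

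First I would fix a uniform test element $c\in R^{\circ}$ for the pair $(R,\Delta)$, a system of parameters $x_1,\dots,x_d$, and represent a generic element $\xi\in\Ker\delta$ as a cohomology class $[z/(x_1\cdots x_d)^t]$ in $H_{\m}^d(\omega_R)$. The hypothesis $\delta(\xi)=0$ says that $\xi$ already vanishes in $H_Z^d(Y,\sO_Y(\lfloor f^*(K_X+\Delta)\rfloor))$, which gives a concrete \v{C}ech-level identity on a finite affine cover of $Y$. I would then raise that identity to its $q$-th Frobenius power, working on the normal $F$-finite scheme $Y$, and use the pointwise estimate $q\lfloor f^*(K_X+\Delta)\rfloor\le\lfloor qf^*(K_X+\Delta)\rfloor$ of $\Q$-divisors to keep track of the Frobenius twist.

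Pushing the resulting identity forward by $f$, and using that $f^*(K_X+\Delta)-K_Y$ is supported on $\Exc(f)$, one can check that $f_*\sO_Y(qK_Y-\lfloor qf^*(K_X+\Delta)\rfloor)$ lies inside an appropriate reflexive module on $X$ that absorbs the $\lceil(q-1)\Delta\rceil$ twist. Combined with multiplication by the fixed test element $c$, this produces the relation
\[
c\cdot z^q\cdot R(\lceil(q-1)\Delta\rceil)\subseteq (x_1^{tq},\dots,x_d^{tq})\,R(qK_R)
\]
for every $q=p^e\gg 0$, which is exactly the defining condition for $\xi$ to lie in $0_E^{*\Delta}$.

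The main obstacle will be the rounding comparison: matching $f_*$ of $\lfloor qf^*(K_X+\Delta)\rfloor$-type divisors on $Y$ with $\lceil(q-1)\Delta\rceil$ on $X$ uniformly in $q$, while controlling the exceptional discrepancy carried by $f^*(K_X+\Delta)-K_Y$. The chosen test element $c$ absorbs the bounded error coming both from round-off and from the difference between $(q-1)\Delta$ and $q\Delta$. Once this estimate is in place, the remainder of the argument is formal from Matlis duality and the definition of $\Delta$-tight closure; the full bookkeeping is carried out in \cite[Theorem 2.13]{interpret. multiplier}.
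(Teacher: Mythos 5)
Your proposal is correct and follows essentially the same route as the paper: both reduce the claim, via Matlis duality over $\widehat{R}$, to Takagi's containment $\tau(R,\Delta)\subseteq H^0(Y,\sO_Y(\lceil K_Y-f^*(K_X+\Delta)\rceil))$ from \cite[Theorem 2.13]{interpret. multiplier}, which the paper simply quotes while you unwind the underlying Frobenius/\v{C}ech argument before deferring the bookkeeping to the same reference. Two small corrections: the twist $R(\lceil(q-1)\Delta\rceil)$ in your displayed relation should enlarge the target rather than multiply $z^q$ on the left (the condition is $cz^q\in(x_1^{tq},\dots,x_d^{tq})\,R(qK_X+\lceil(q-1)\Delta\rceil)$), and $\Ann_R(\Ker\delta)$ is the ideal $H^0(Y,\sO_Y(\lceil K_Y-f^*(K_X+\Delta)\rceil))$ attached to the given normal $Y$, which contains but need not equal a multiplier ideal since $Y$ is not assumed to be a log resolution (and need not exist as such in characteristic $p$).
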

\begin{proof}
	By \cite[Theorem 2.13]{interpret. multiplier}, we have $\tau(R,\Delta)\subseteq H^0(Y,\sO_Y(\lceil K_Y-f^*(K_X+\Delta) \rceil))$. Hence,
	\begin{eqnarray*}
		\Ker \delta&=&\Ann_E H^0(Y,\sO_Y(\lceil K_Y-f^*(K_X+\Delta) \rceil))\\
		&\subseteq& \Ann_E \tau(R,\Delta) \\
		&=&\Ann_E \tau(R,\Delta)\widehat{R} \\
		&=&\Ann_E \tau(\widehat{R},\widehat{\Delta}) \\
		&=&\Ann_E \Ann_{\widehat{R}} 0_E^{*\Delta} \\
		&=& 0_E^{*\Delta}.
	\end{eqnarray*}
\end{proof}
\begin{rem}
	Moreover, we have $\Ker \delta=0_E^{*\Delta}$ if $f$ is a reduction of a log resolution in characteristic zero modulo $p\gg 0$ by \cite[Theorem 3.2]{interpret. multiplier}.
\end{rem}
\begin{thm}\label{Main Theorem}
	Let $R$ be a normal local domain essentially of finite type over $\C$. Fix an effective canonical divisor $K_R\ge 0$ on $\Spec R$. Let $\Delta \ge 0$ be an effective $\Q$-Weil divisor on $\Spec R$ such that $K_R+\Delta$ is $\Q$-Cartier and $B_p$ a big Cohen-Macaulay $R_p^+$-algebra for almost all $p$. Suppose that $n(K_R+\Delta)=\Div (f)$ for $f\in R^\circ, n\in \N$. Then we have
	\[
		\tau_{\widehat{B}}(\widehat{R},\widehat{\Delta})=\J(\widehat{R},\widehat{\Delta}).
	\]
\end{thm}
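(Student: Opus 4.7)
The inclusion $\J(\widehat R,\widehat\Delta)\subseteq\tau_{\widehat B}(\widehat R,\widehat\Delta)$ is Proposition~\ref{multiplier subset BCM-test} together with the flat base change identity $\J(R,\Delta)\widehat R=\J(\widehat R,\widehat\Delta)$; the remaining task is the reverse inclusion $\tau_{\widehat B}(\widehat R,\widehat\Delta)\subseteq\J(\widehat R,\widehat\Delta)$. The plan is to realize $\J(\widehat R,\widehat\Delta)$ as the annihilator (on $\widehat R$) of a submodule $\Ker\delta\subseteq H_\m^d(\omega_{\widehat R})$ coming from a log resolution, and then to show this submodule is contained in $0^{\widehat B,K_R+\Delta}_{H_\m^d(\omega_{\widehat R})}$ via approximation to characteristic $p\gg 0$.

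Fix a log resolution $f\colon Y\to X=\Spec R$ of $(X,\Delta)$, let $\widehat f\colon\widehat Y\to\widehat X$ be its base change, and set $\widehat Z=\widehat f^{-1}(\m)$. In characteristic zero, $\J(\widehat R,\widehat\Delta)=\widehat f_*\sO_{\widehat Y}(\lceil K_{\widehat Y}-\widehat f^*(K_R+\Delta)\rceil)$. Let
\[
\delta\colon H_\m^d(\omega_{\widehat R})\longrightarrow H_{\widehat Z}^d\bigl(\widehat Y,\sO_{\widehat Y}(\lfloor\widehat f^*(K_R+\Delta)\rfloor)\bigr)
\]
be the Matlis dual (over $\widehat R$) of the inclusion $\J(\widehat R,\widehat\Delta)\hookrightarrow\widehat R$; Matlis duality then gives $\Ann_{\widehat R}(\Ker\delta)=\J(\widehat R,\widehat\Delta)$. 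Thus it suffices to prove $\Ker\delta\subseteq 0^{\widehat B,K_R+\Delta}_{H_\m^d(\omega_{\widehat R})}$, since annihilating then yields $\tau_{\widehat B}(\widehat R,\widehat\Delta)\subseteq\Ann_{\widehat R}(\Ker\delta)=\J(\widehat R,\widehat\Delta)$.

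Take $\eta\in\Ker\delta$; via the isomorphism $H_\m^d(\omega_R)\cong H_\m^d(\omega_{\widehat R})$ (valid since $\omega_R$ is finitely generated), view $\eta$ as an element of $H_\m^d(\omega_R)$. Using Proposition~\ref{model of affine ring} and the relative-hull formalism of Section~3, spread out $f,\Delta,\eta$, and the Matlis-dual construction over a suitable model so that the approximations $f_p\colon Y_p\to X_p$ are log resolutions of $(X_p,\Delta_p)$ and so that $\eta_p\in\Ker\delta_p$ for almost all $p$. By Lemma~\ref{lemma of Main thm}, $\Ker\delta_p\subseteq 0_{E_p}^{*\Delta_p}$, and by Proposition~\ref{BCM test tight closure} this coincides with $0^{B_p,K_{R_p}+\Delta_p}_{H_{\m_p}^d(\omega_{R_p})}=\Ker\psi_p$. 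Hence $\psi_p(\eta_p)=0$ for almost all $p$, so $\ulim_p\psi_p(\eta_p)=0$ in $\ulim_p H_{\m_p}^d(B_p)$. Since the composition $H_\m^d(\omega_R)\xrightarrow{\psi}H_\m^d(B)\to\ulim_p H_{\m_p}^d(B_p)$ sends $\eta$ to $\ulim_p\psi_p(\eta_p)=0$ and the second arrow is injective by Lemma~\ref{injectivity local cohomology}, we obtain $\psi(\eta)=0$ in $H_\m^d(B)$. Naturality of $\psi$ with respect to the $R$-algebra map $B\hookrightarrow\widehat B$ then forces $\widehat\psi(\eta)=0$ in $H_\m^d(\widehat B)$, that is, $\eta\in 0^{\widehat B,K_R+\Delta}_{H_\m^d(\omega_{\widehat R})}$.

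The main technical obstacle lies in the spreading-out step: one must verify that approximating $\delta$ yields precisely the Matlis-dual maps $\delta_p$ to which Lemma~\ref{lemma of Main thm} applies. This requires interpreting both sides via \v{C}ech cohomology on a common affine cover (cf.\ the treatment of ultra-cohomology in Section~3) and checking that the characteristic-zero inclusion $\widehat f_*\sO_{\widehat Y}(\lceil K_{\widehat Y}-\widehat f^*(K_R+\Delta)\rceil)\hookrightarrow\widehat R$ approximates modulo $p\gg 0$ to the inclusion appearing in Lemma~\ref{lemma of Main thm}. The $F$-finiteness hypothesis of that lemma is automatic because $R_p$ is essentially of finite type over the perfect field $\bar\F_p$. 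Once the spreading-out is in place, the rest of the argument is an application of \L o\'s's theorem combined with Lemma~\ref{injectivity local cohomology}.
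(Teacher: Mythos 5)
Your proposal is correct and follows essentially the same route as the paper: realize $\J(\widehat R,\widehat\Delta)$ as $\Ann_{\widehat R}(\Ker\delta)$ for the Matlis-dual map $\delta$ coming from a log resolution, transfer $\eta\in\Ker\delta$ to $\eta_p\in\Ker\delta_p$ for almost all $p$, apply Lemma \ref{lemma of Main thm} and Proposition \ref{BCM test tight closure} in characteristic $p$, and descend via Lemma \ref{injectivity local cohomology}. The spreading-out step you flag as the main obstacle is precisely where the paper works: it factors $\delta$ through the exact row $H^{d-1}(Y,\sL)\to H^{d-1}(Y\setminus Z,\sL|_{Y\setminus Z})\to H^d_Z(\sL)$ and uses exactness of ultraproducts to turn $\eta\in\Ker\delta$ into $\gamma_p(\eta_p)\in\operatorname{Im}\rho_p^{d-1}$, i.e.\ $\eta_p\in\Ker\delta_p$, thereby sidestepping the unknown injectivity of the ultra-cohomology comparison maps.
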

\begin{proof}
	Thanks to Proposition \ref{multiplier subset BCM-test}, it suffices to prove $\tau_{\widehat{B}}(\widehat{R},\widehat{\Delta})\subseteq \J(\widehat{R},\widehat{\Delta})$. Let $\mu:Y\to X:=\Spec R$ be a log resolution of $(X,\Delta)$ and let $Z:=\mu^{-1}(\m)$. Considering approximations, we have a corresponding morphisms $\mu_p:Y_p\to X_p:=\Spec R_p$, $Z_p=\mu_p^{-1}(\m_p)$ for almost all $p$. Then we have a commutative diagram
	\[
	\xymatrix{
		&H_\m^d(\omega_R) \ar[d]^{\gamma} \ar[rd]^{\delta}& \\
		H^{d-1}(Y,\sL)\ar[d] \ar[r] & H^{d-1}(Y\setminus Z,\sL|_{Y\setminus Z}) \ar [r] \ar[d]^{u^{d-1}} & H_Z(\sL) \\
		H_\infty^{d-1}(Y,\sL) \ar[r]^-{\rho_\infty^{d-1}} & H_\infty^{d-1}(Y\setminus Z,\sL|_{Y\setminus Z})
	},
	\]
	where $\sL:=\sO_Y(\lfloor\mu^*(K_X+\Delta) \rfloor)$ and the middle row is exact.
	Similarly, we have the following commutative diagram for almost all $p$:
	\[
	\xymatrix{
	& H_{\m_p}^d(\omega_{R_p}) \ar[d]^{\gamma_p} \ar[rd]^{\delta_p} & \\
	H^{d-1}(Y_p,\sL_p) \ar[r]^-{\rho_p^{d-1}} & H^{d-1}(Y_p\setminus Z_p, \sL_p|_{Y_p\setminus Z_p}) \ar[r] & H^d_{Z_p}(\sL_p)
	},
	\]
	where the middle row is exact.
	Assume that $\eta \in \Ker \delta$. Then $u^{d-1}(\gamma(\eta))\in \operatorname{Im} \rho_\infty^{d-1}$. Therefore, $\gamma_p(\eta_p) \in \operatorname{Im}\rho_p^{d-1}$ for almost all $p$. Hence, $\eta_p\in \Ker \delta_p$ for almost all $p$. By lemma \ref{lemma of Main thm}, $\eta_p\in 0_{H_{\m_p}^d(\omega_{R_p})}^{*\Delta_p}$ for almost all $p$. Hence, by Propositon \ref{BCM test tight closure}, we have $\eta_p\in 0_{H_{m_p}^d(\omega_{R_p})}^{B_p,K_{R_p}+\Delta_p}$ for almost all $p$. We have a commutative diagram
	\[
	\xymatrix{
		H_\m^d(\omega_R) \ar[r] \ar[d]^{\psi} & \ulim_p H_{\m_p}^d(\omega_{R_p}) \ar[d]^{\psi_\infty:=\ulim_p\psi_p} \\
		H_\m^d(B) \ar[r] & \ulim_p H_{\m_p}^d(B_p)
	},
	\]
	where $\psi$, $\psi_p$ is the morphisms as in Definition \ref{def of BCM test ideal}.
	Since $\psi_\infty(\ulim_p \eta_p)=0$ and $H_\m^d(B) \to \ulim_p H_{\m_p}^d(B_p)$ is injective by Lemma \ref{injectivity local cohomology}, we have $\psi(\eta)=0$ in $H_\m^d(B)$. Hence, $\eta\in 0_{H_\m^d(\omega_R)}^{B,K_R+\Delta}$. Therefore, we have
	\begin{eqnarray*}
	\tau_{\widehat{B}}(\widehat{R},\widehat{\Delta})&\subseteq& \Ann_{\widehat{R}}(\Ker \delta)\\
	&=& \Ann_{\widehat{R}}\Ann_{H_\m^d(\omega_R)}\J(R,\Delta)\\
	&=& \J(\widehat{R},\widehat{\Delta}).
	\end{eqnarray*}
\end{proof}
\begin{rem}
	We can generalize the notion of ultra-test ideals in \cite[Definition 5.5]{TY} to the pair $(R,\Delta)$.
	Using Lemma \ref{lemma of Main thm} instead of \cite[Theorem 6.9]{HY}, we can show that generalized ultra-test ideals are equal to multiplier ideals.
\end{rem}
\section{Generalized module closures and applications}
  We introduce the notion of generalized module closures inspired by \cite{PRG21}. Using the generalized module closures, we will generalize \cite[Corollary 5.30]{TY}. We also use \cite[Subsection 6.1]{Ma-Schwede} as reference in the following arguments.
  \begin{setting}\label{Setting 7}
  Suppose that $R$ is a normal local domain essentially of finite type over $\C$ of dimension $d$, $K_R\ge0$ is a fixed effective canonical divisor and $\Delta\ge 0$ is an effective $\Q$-Weil divisor such that $K_R+\Delta$ is $\Q$-Cartier. Moreover, we assume that $B_p$ is a big Cohen-Macaulay $R_p^+$-algebra for almost all $p$, $B:=\ulim_p B_p$ and $r(K_R+\Delta)=\Div f$ for $f\in R$, $r\in\N$. Let $R'\subseteq R^+$ be an integrally closed finite extension of $R$ such that $f^{1/r}\in R'$ and $\pi^*\Delta$ is Weil divisor, where $\pi:\Spec R'\to \Spec R$.
  \end{setting}
 \begin{defn}\label{generalized closure}
 	Assume Setting \ref{Setting 7} and let $g\in R^\circ$ and $t>0$ be a positive rational number. We use $\widehat{B_\Delta}$ to denote
 	\[
 	B\otimes_{R'}R'(\pi^*\Delta)\otimes_R \widehat{R}.
 	\]
 	For any $\widehat{R}$-modules $N\subseteq M$, we define $N_M^{\cl_{\widehat{B_\Delta},g^t}}$ as follows: $x\in N_M^{\cl_{\widehat{B_\Delta},g^t}}$ if and only if $g^t\otimes x\in \operatorname{Im}(\widehat{B_\Delta}\otimes_{\widehat{R}}N\to\widehat{B_\Delta}\otimes_{\widehat{R}}M)$.
 	We use $\tau_{\cl_{\widehat{B_\Delta},g^t}}(\widehat{R})$ to denote
 	\[
 	\bigcap_{N\subseteq M}(N:_{\widehat{R}}N_M^{\cl_{\widehat{B_\Delta}},g^t}),
 	\]
 	where $M$ runs through all $\widehat{R}$-modules and $N$ runs through all $\widehat{R}$-submodules of $M$.
 \end{defn}
\begin{prop}\label{Prop test ideal of module closure}
	In Setting \ref{Setting 7}, if $g\in R^\circ$ and $t>0$ is a positive rational number, then we have
	\[
	\tau_{\cl_{\widehat{B_\Delta},g^t}}(\widehat{R})=\bigcap_{M}\Ann_{\widehat{R}}0_M^{\cl_{\widehat{B_\Delta},g^t}}=\Ann_{\widehat{R}}0_E^{\cl_{\widehat{B_\Delta},g^t}},
	\]
	where $M$ runs through all $\widehat{R}$-modules and $E$ is the injective hull of the residue field of $R$.
\end{prop}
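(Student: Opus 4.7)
The plan is to prove the two equalities separately. Both follow formally from the definition of the closure, the right-exactness of $\widehat{B_\Delta}\otimes_{\widehat{R}}(-)$, and the cogenerator property of $E$ (namely, that $\Hom_{\widehat{R}}(\widehat{R}/I,E)=(0:_E I)\neq 0$ for every proper ideal $I\subseteq\widehat{R}$, a standard consequence of Matlis duality).

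For the first equality, the key observation is that for any $\widehat{R}$-modules $N\subseteq M$,
\[
N_M^{\cl_{\widehat{B_\Delta},g^t}}/N \;=\; 0_{M/N}^{\cl_{\widehat{B_\Delta},g^t}}.
\]
Applying $\widehat{B_\Delta}\otimes_{\widehat{R}}(-)$ to $N\to M\to M/N\to 0$ yields a right-exact sequence in which the image of $\widehat{B_\Delta}\otimes_{\widehat{R}} N\to \widehat{B_\Delta}\otimes_{\widehat{R}} M$ is precisely the kernel of the surjection onto $\widehat{B_\Delta}\otimes_{\widehat{R}}(M/N)$. Hence $g^t\otimes x$ lies in that image if and only if $g^t\otimes \bar{x}$ vanishes in $\widehat{B_\Delta}\otimes_{\widehat{R}}(M/N)$. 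It follows that $(N:_{\widehat{R}}N_M^{\cl_{\widehat{B_\Delta},g^t}})=\Ann_{\widehat{R}}(0_{M/N}^{\cl_{\widehat{B_\Delta},g^t}})$; intersecting over all pairs $(N,M)$, and noting that $M/N$ already runs over every $\widehat{R}$-module (take $N=0$), gives the first equality.

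For the second equality, the inclusion $\bigcap_{M}\Ann_{\widehat{R}}(0_M^{\cl_{\widehat{B_\Delta},g^t}})\subseteq \Ann_{\widehat{R}}(0_E^{\cl_{\widehat{B_\Delta},g^t}})$ is trivial by specializing $M=E$. For the reverse inclusion I would argue by contradiction: suppose $c\in \Ann_{\widehat{R}}(0_E^{\cl_{\widehat{B_\Delta},g^t}})$ but $cx\neq 0$ for some $x\in 0_M^{\cl_{\widehat{B_\Delta},g^t}}$. Since $\widehat{R}(cx)\cong \widehat{R}/\Ann_{\widehat{R}}(cx)$ is nonzero, the cogenerator property yields an $\widehat{R}$-linear map $\widehat{R}(cx)\to E$ sending $cx$ to a nonzero element; by injectivity of $E$ this extends to $\phi\colon M\to E$ with $\phi(cx)\neq 0$. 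Naturality of $\widehat{B_\Delta}\otimes_{\widehat{R}}(-)$ applied to $\phi$ sends $g^t\otimes x=0\in\widehat{B_\Delta}\otimes_{\widehat{R}} M$ to $g^t\otimes \phi(x)=0\in \widehat{B_\Delta}\otimes_{\widehat{R}} E$, so $\phi(x)\in 0_E^{\cl_{\widehat{B_\Delta},g^t}}$, whence $\phi(cx)=c\phi(x)=0$, a contradiction. No step is a serious obstacle; the only point deserving of care is that the definition of the closure depends only on the \emph{image} of $\widehat{B_\Delta}\otimes_{\widehat{R}}N$ in $\widehat{B_\Delta}\otimes_{\widehat{R}}M$, so the potential non-injectivity of that map is irrelevant, and this is exactly what makes the identification $N_M^{\cl}/N=0_{M/N}^{\cl}$ hold.
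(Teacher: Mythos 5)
Your proof is correct and is essentially the argument the paper invokes by citing \cite[Lemma 3.3, Proposition 3.9]{PRG21}: the first equality via right-exactness of $\widehat{B_\Delta}\otimes_{\widehat{R}}(-)$ identifying $N_M^{\cl}/N$ with $0_{M/N}^{\cl}$, and the second via the injective cogenerator property of $E$ together with functoriality of the closure. Nothing further is needed.
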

\begin{proof}
	We can prove this by arguments similar to \cite[Lemma 3.3, Proposition 3.9]{PRG21}.
\end{proof}
\begin{prop}\label{BCM closure and module closure}
	In Setting \ref{Setting 7}, if $g\in R^\circ$ and $t>0$ is a positive rational number, then we have
	\[
	0_E^{B,K_R+\Delta+t\Div g}=0_E^{\cl_{\widehat{B_\Delta},g^t}}.
	\]
\end{prop}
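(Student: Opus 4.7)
The plan is to unfold both closures as concrete conditions on an element $x \in E$ and match them. The key ingredient is the divisor-twist trick: since $f^{1/r} \in R' \subseteq B$ realizes the pullback of $r(K_R+\Delta)$ as a principal divisor on $\Spec R'$, multiplication by $f^{1/r}$ induces a canonical $B$-linear isomorphism
\[
 \omega_R \otimes_R B \;\cong\; R'(\pi^*\Delta) \otimes_{R'} B.
\]
After $\m$-adic completion and tensoring with $E$, this becomes an identification $\widehat{\omega_R} \otimes_{\widehat{R}} \widehat{B} \otimes_{\widehat{R}} E \cong \widehat{B_\Delta} \otimes_{\widehat{R}} E$, compatible with the local-cohomology/injective-hull formalism used to define $0_E^{B,K_R+\Delta+t\Div g}$.

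Next, I would clear denominators. Writing $t=a/b$ and setting $n:=rb$, $f':=f^{b}g^{ra}$, we have $n(K_R+\Delta+t\Div g)=\Div f'$, and inside $R'$ (enlarged if necessary to contain $g^{1/b}$ and $(f')^{1/n}$) we get the factorization $(f')^{1/n}=f^{1/r}\cdot g^{t}$. By the definition of $0_E^{B,K_R+\Delta+t\Div g}$ in the sense of Definition~\ref{def of BCM test ideal}, an element $x\in E$ lies in this module iff its image vanishes after mapping into $\widehat{B}\otimes_{\widehat{R}} E$ and multiplying by $(f')^{1/n}$. Splitting this multiplication as $f^{1/r}$ followed by $g^{t}$ and applying the twist isomorphism above, the condition becomes the vanishing of $g^{t}\otimes x$ in $\widehat{B_\Delta}\otimes_{\widehat{R}} E$, which is exactly the definition of $0_E^{\cl_{\widehat{B_\Delta},g^{t}}}$ from Definition~\ref{generalized closure}. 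This gives both inclusions at once.

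The main obstacle I anticipate is verifying the compatibility of the twist isomorphism with the multiplication-by-$(f')^{1/n}$ map in a functorial way: one has to check that ``multiply by $f^{1/r}$'' on $\omega_R\otimes_R B$ matches the structural isomorphism with $R'(\pi^*\Delta)\otimes_{R'} B$, and that these identifications pass through $\m$-adic completion and the tensor product with $E$ without ambiguity. This is precisely the divisor-twist bookkeeping of \cite[Subsection~6.1]{Ma-Schwede} and \cite[Lemma~3.3, Proposition~3.9]{PRG21}, already invoked in the hint for Proposition~\ref{Prop test ideal of module closure}; once the identifications are set up carefully, the equality follows by unwinding definitions, with no further input from the ultraproduct construction beyond the fact that $\widehat{B_\Delta}$ is a module of the stated form.
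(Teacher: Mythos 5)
Your overall strategy is exactly the paper's: clear denominators so that $(f')^{1/n}=f^{1/r}g^{t}$, use $f^{1/r}\in R'$ to trivialize the divisorial twist, and match the map $\psi$ defining $0_E^{B,K_R+\Delta+t\Div g}$ with $g^{t}\otimes(-)$ on $\widehat{B_\Delta}\otimes_{\widehat{R}}E$. However, the key isomorphism you assert is false as stated. Multiplication by $f^{1/r}$ does \emph{not} give $\omega_R\otimes_R B\cong R'(\pi^*\Delta)\otimes_{R'}B$: that would require $\pi^*(K_R-\Delta)=\Div(f^{1/r})-2\pi^*\Delta$ to be principal on $\Spec R'$, which fails in general. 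What $r(K_R+\Delta)=\Div f$ actually trivializes is the \emph{sum} of the two twists: the reflexive hull of $R'(\pi^*\Delta)\otimes_R\omega_R$ is $R'(\pi^*(K_R+\Delta))=R'(\Div(f^{1/r}))=f^{-1/r}R'$. The correct identification, which is the one the paper uses, is
\[
\widehat{B_\Delta}\otimes_{\widehat{R}}E\;\cong\;B\otimes_{R'}H_\m^d\bigl(R'(\pi^*\Delta)\otimes_R\omega_R\bigr)\;\cong\;B\otimes_{R'}H_\m^d\bigl(R'(\Div (f^{1/r}))\bigr)\;\xrightarrow{\ \cdot f^{1/r}\ }\;B\otimes_{R'}H_\m^d(R')\;\cong\;H_\m^d(B),
\]
whereas your proposed identification $\widehat{\omega_R}\otimes_{\widehat{R}}\widehat{B}\otimes_{\widehat{R}}E\cong\widehat{B_\Delta}\otimes_{\widehat{R}}E$ inherits the same sign error (it amounts to identifying $B\otimes_{R'}R'(\pi^*\Delta)$ with $B\otimes_{R'}R'(\pi^*K_R)$, rather than identifying $B\otimes_{R'}R'(\pi^*\Delta+\pi^*K_R)$ with $B$).

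Once the twist is set up correctly, the rest of your argument is the paper's proof: one checks that the composite $E\to\widehat{B_\Delta}\otimes_{\widehat{R}}E\xrightarrow{g^{t}\otimes 1}\widehat{B_\Delta}\otimes_{\widehat{R}}E\cong H_\m^d(B)$ coincides with $\psi$ for the divisor $K_R+\Delta+t\Div g$. Two points you gloss over deserve explicit mention. First, ``the condition becomes the vanishing of $g^{t}\otimes x$'' requires not only commutativity of the diagram but also that the comparison map $\widehat{B_\Delta}\otimes_{\widehat{R}}E\to H_\m^d(B)$ is injective; this holds because it is in fact an isomorphism, being $B\otimes_{R'}(-)$ applied to the isomorphism $H_\m^d(R'(\Div(f^{1/r})))\cong H_\m^d(R')$. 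Second, replacing $R'(\pi^*\Delta)\otimes_R\omega_R$ by its reflexive hull is legitimate because the kernel and cokernel of $M\to M^{**}$ have codimension at least two, so $H_\m^d(M)\cong H_\m^d(M^{**})$ as $R'$-modules, and one tensors this isomorphism with $B$ over $R'$ afterwards; since $B$ is not flat, the order of these operations matters.
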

\begin{proof}
	Since the reflexive hull $(R'(\pi^*\Delta)\otimes_R \omega_R)^{**}$ is equal to $R'(\Div (f^{\frac{1}{r}}))$, we have $H_\m^d(R'(\pi^*\Delta)\otimes_R \omega_R)\cong H_\m^d(R'(\Div (f^{\frac{1}{r}})))$. Hence, we have
	\begin{eqnarray*}
		\widehat{B_\Delta}\otimes_{\widehat{R}} E &\cong& B\otimes_{R'}H_\m^d(R'(\pi^*\Delta)\otimes_R\omega_R) \\
		&\cong& B\otimes_{R'}H_\m^d(R'(\Div (f^{\frac{1}{r}}))).
	\end{eqnarray*}
	Then there exists a commutative diagram
	\[
	\xymatrix{
		E\cong H_\m^d(\omega_R) \ar[r] \ar[dddd] & \widehat{B_\Delta}\otimes_{\widehat{R}}E \ar[d]^{g^t\otimes 1} \\
		& \widehat{B_\Delta}\otimes_{\widehat{R}}E \ar[d]^{\cong} \\
		& B\otimes_{R'}H_\m^d(R'(\Div (f^{\frac{1}{r}}))) \ar[d]^{\id \otimes (\cdot f^{1/r})} \\
		& B\otimes_{R'} H_\m^d(R') \ar[d]^{\cong}\\
		H_\m^d(B\otimes_R  \omega_R) \ar[r]_{\psi}& H_\m^d(B)
	},
	\]
	where $\psi$ is the second map of
	\[
	\cdot f^{\frac{1}{r}}g^t : H_\m^d(B) \to H_\m^d(B\otimes_R\omega_R) \to H_\m^d(B).
	\]
	The result follows by the above commutative diagram.
\end{proof}
\begin{defn}\label{closure with ideal}
	Let $R\hookrightarrow S$ be an injective local homomorphism of normal local domains essentially of finite type over $\C$. Fix $K_R, K_S\ge0$ effective canonical divisors on $\Spec R$ and on $\Spec S$, respectively. Let $\Delta_R,\Delta_S\ge 0$ be effective $\Q$-Weil divisors on $\Spec R$ and on $\Spec S$, respectively, such that $K_R+\Delta_R$, $K_S+\Delta_S$ are $\Q$-Cartier. Let $\ba\subseteq R$ be a nonzero ideal and $t>0$ be a positive rational number. Suppose that $\widehat{B_{\Delta_R}}$ and $\widehat{B_{\Delta_S}}$ are defined as in Definition \ref{generalized closure}. Then, for an $\widehat{R}$-module $M$ and an $\widehat{S}$-module $N$, we define $0_{M}^{\cl_{\widehat{B_{\Delta_R}},\ba^t}}$, $0_{N}^{\cl_{\widehat{B_{\Delta_S}}\ba^t}}$ by
	\begin{eqnarray*}
		0_{M}^{\cl_{\widehat{B_{\Delta_R}},\ba^t}}:=\bigcap_{n\in\N}\bigcap_{g\in\ba^{\lceil nt \rceil}}0_{M}^{\cl_{\widehat{B_{\Delta_R}},g^{\frac{1}{n}}}},\\
		0_{N}^{\cl_{\widehat{B_{\Delta_S}}\ba^t}}:=\bigcap_{n\in\N}\bigcap_{g\in\ba^{\lceil nt \rceil}}0_{N}^{\cl_{\widehat{B_{\Delta_S}}g^{\frac{1}{n}}}}.
	\end{eqnarray*}
	We use $\tau_{\cl_{\widehat{B_{\Delta_R}},\ba^t}}(\widehat{R})$, $\tau_{\cl_{\widehat{B_{\Delta_S}},\ba^t}}(\widehat{S})$ to denote
	\[
	\bigcap_M \Ann_{\widehat{R}}0_M^{\cl_{\widehat{B_{\Delta_R}},\ba^t}}, \\
	\bigcap_N \Ann_{\widehat{S}}0_N^{\cl_{\widehat{B_{\Delta_S}},\ba^t}},
	\]
	where $M$ runs through all $\widehat{R}$-modules and $N$ runs through all $\widehat{S}$-modules.
\end{defn}
\begin{prop}
	In the setting of Definition \ref{closure with ideal}, we have
	\begin{eqnarray*}
		\Ann_{\widehat{R}}0_{E_R}^{\cl_{\widehat{B_{\Delta_R}},\ba^t}}=\bigcap_M \Ann_{\widehat{R}}0_M^{\cl_{\widehat{B_{\Delta_R}},\ba^t}}, \\
		\Ann_{\widehat{S}}0_{E_S}^{\cl_{\widehat{B_{\Delta_S}},\ba^t}}=\bigcap_N \Ann_{\widehat{S}}0_N^{\cl_{\widehat{B_{\Delta_S}},\ba^t}},
	\end{eqnarray*}
	where $M,N$ run through all $\widehat{R}$-modules and all $\widehat{S}$-modules, respectively, and $E_R$, $E_S$ are the injective hulls of the residue fields of $R$ and of $S$, respectively.
\end{prop}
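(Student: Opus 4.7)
My approach to the proposition is to adapt the argument of \cite[Lemma 3.3, Proposition 3.9]{PRG21} that underlies the already-proved Proposition \ref{Prop test ideal of module closure}, checking that it is unaffected by replacing the single element $g^t$ by the whole ideal $\ba^t$. I treat the $\widehat{R}$-equality in detail; the $\widehat{S}$-equality is entirely analogous after replacing $(\widehat{R},\widehat{B_{\Delta_R}},E_R)$ by $(\widehat{S},\widehat{B_{\Delta_S}},E_S)$. The inclusion $\Ann_{\widehat{R}}0_{E_R}^{\cl_{\widehat{B_{\Delta_R}},\ba^t}}\supseteq \bigcap_M \Ann_{\widehat{R}}0_M^{\cl_{\widehat{B_{\Delta_R}},\ba^t}}$ is trivial, since $E_R$ is among the modules $M$ over which the right-hand intersection is taken.

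The first real step is to record a functorial property: for any $\widehat{R}$-linear map $\psi\colon M\to N$, the map $\psi$ carries $0_M^{\cl_{\widehat{B_{\Delta_R}},\ba^t}}$ into $0_N^{\cl_{\widehat{B_{\Delta_R}},\ba^t}}$. By the very definition $0_M^{\cl_{\widehat{B_{\Delta_R}},\ba^t}}=\bigcap_{n,g\in\ba^{\lceil nt\rceil}}0_M^{\cl_{\widehat{B_{\Delta_R}},g^{1/n}}}$, it suffices to establish this for each principal closure, where it is formal: $\widehat{B_{\Delta_R}}\otimes_{\widehat{R}}(-)$ is a functor, so if $g^{1/n}\otimes x=0$ in $\widehat{B_{\Delta_R}}\otimes_{\widehat{R}}M$ then $(1\otimes\psi)(g^{1/n}\otimes x)=g^{1/n}\otimes\psi(x)=0$ in $\widehat{B_{\Delta_R}}\otimes_{\widehat{R}}N$.

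Next I take $r\in\Ann_{\widehat{R}}0_{E_R}^{\cl_{\widehat{B_{\Delta_R}},\ba^t}}$, an arbitrary $\widehat{R}$-module $M$, and $x\in 0_M^{\cl_{\widehat{B_{\Delta_R}},\ba^t}}$, and show $rx=0$ by a cogenerator argument. Assume for contradiction $rx\neq 0$; then $\widehat{R}\cdot(rx)\cong\widehat{R}/\Ann_{\widehat{R}}(rx)$ is a nonzero cyclic $\widehat{R}$-module. Since $\widehat{R}$ is a complete Noetherian local ring, $E_R$ is an injective cogenerator of its module category: any nonzero element of the socle of $E_R$ is annihilated by every proper ideal of $\widehat{R}$, so there is a homomorphism $\phi_0\colon\widehat{R}\cdot(rx)\to E_R$ sending $rx$ to a nonzero element. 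By injectivity of $E_R$, $\phi_0$ extends to some $\phi\colon M\to E_R$ with $\phi(rx)\neq 0$. The functorial property yields $\phi(x)\in 0_{E_R}^{\cl_{\widehat{B_{\Delta_R}},\ba^t}}$, whence $r\phi(x)=0$ by choice of $r$; but $r\phi(x)=\phi(rx)\neq 0$, a contradiction. Hence $rx=0$, and intersecting over $M$ supplies the missing inclusion.

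The main obstacle is essentially organisational: one needs to verify that the ``ideal'' version of the closure, defined as an intersection of principal closures, still satisfies the two ingredients on which the single-element argument in Proposition \ref{Prop test ideal of module closure} rests, namely functoriality of $0_{(-)}^{\cl}$ under $\widehat{R}$-module homomorphisms and the ability to detect any nonzero element of any $\widehat{R}$-module via a homomorphism into $E_R$. Both ingredients survive the intersection unchanged, so no genuinely new ideas enter.
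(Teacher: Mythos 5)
Your proof is correct and follows the same route the paper intends: the paper's proof is a one-line reference to the argument of Proposition \ref{Prop test ideal of module closure}, i.e.\ to \cite[Lemma 3.3, Proposition 3.9]{PRG21}, whose content is exactly the two ingredients you verify — functoriality of $0_{(-)}^{\cl_{\widehat{B_{\Delta_R}},\ba^t}}$ under $\widehat{R}$-linear maps (reduced to the principal closures via the defining intersection) and the fact that $E_R$ is an injective cogenerator over the complete local ring $\widehat{R}$. You have simply written out the details the paper leaves to the citation.
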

\begin{proof}
	We can show this by arguments similar to Proposition \ref{Prop test ideal of module closure}.
\end{proof}
\begin{prop}
	In the setting of Definition \ref{closure with ideal}, we have
	\[
	\tau_{\cl_{\widehat{B_{\Delta_R}},\ba^t}}(\widehat{R})=\J(\widehat{R},\widehat{\Delta},(\ba\widehat{R})^t).
	\]
\end{prop}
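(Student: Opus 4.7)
The plan is to reduce the statement to Theorem~\ref{Main Theorem} by unfolding the ideal-parameter closure into a family of single-element closures and then reorganising the annihilator via Matlis duality. By the preceding proposition,
\[
\tau_{\cl_{\widehat{B_{\Delta_R}},\ba^t}}(\widehat{R}) = \Ann_{\widehat{R}}\bigcap_{n\in\N}\bigcap_{g\in\ba^{\lceil nt\rceil}} 0_{E_R}^{\cl_{\widehat{B_{\Delta_R}},g^{1/n}}},
\]
and Proposition~\ref{BCM closure and module closure} (with exponent $1/n$) identifies each factor of the intersection as $0_{E_R}^{B,K_R+\Delta_R+(1/n)\Div g}$. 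Since $\Div g$ is principal, $K_R+\Delta_R+(1/n)\Div g$ is still $\Q$-Cartier, so Theorem~\ref{Main Theorem} applies to this pair and gives
\[
\Ann_{\widehat{R}} 0_{E_R}^{B,K_R+\Delta_R+(1/n)\Div g} = \tau_{\widehat{B}}\bigl(\widehat{R},\widehat{\Delta}+\tfrac{1}{n}\widehat{\Div g}\bigr) = \J\bigl(\widehat{R},\widehat{\Delta}+\tfrac{1}{n}\widehat{\Div g}\bigr).
\]

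Next I would invoke Matlis duality over the complete local ring $\widehat{R}$. Since $E_R$ is the injective hull of the residue field of $\widehat{R}$, every submodule $N\subseteq E_R$ satisfies $N=\Ann_{E_R}(\Ann_{\widehat{R}} N)$, and the correspondence $I\leftrightarrow \Ann_{E_R}(I)$ sends sums of ideals to intersections of submodules. Consequently, for any family of submodules $\{N_\alpha\}\subseteq E_R$,
\[
\Ann_{\widehat{R}}\bigcap_\alpha N_\alpha = \sum_\alpha \Ann_{\widehat{R}} N_\alpha.
\]
Applying this to the intersection above, together with the identification from the first paragraph, gives
\[
\tau_{\cl_{\widehat{B_{\Delta_R}},\ba^t}}(\widehat{R}) = \sum_{n\in\N}\sum_{g\in\ba^{\lceil nt\rceil}}\J\bigl(\widehat{R},\widehat{\Delta}+\tfrac{1}{n}\widehat{\Div g}\bigr).
\]

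Finally I would verify the classical multiplier-ideal summation formula
\[
\J\bigl(\widehat{R},\widehat{\Delta},(\ba\widehat{R})^t\bigr) = \sum_{n\in\N}\sum_{g\in\ba^{\lceil nt\rceil}}\J\bigl(\widehat{R},\widehat{\Delta}+\tfrac{1}{n}\widehat{\Div g}\bigr).
\]
On a log resolution $\mu:Y\to\Spec\widehat{R}$ of $(\widehat{R},\widehat{\Delta},\ba\widehat{R})$ with $\ba\sO_Y=\sO_Y(-F)$, any $g\in\ba^{\lceil nt\rceil}$ satisfies $\mu^*\Div g\geq \lceil nt\rceil F$, hence $\tfrac{1}{n}\mu^*\Div g\geq tF$, and a termwise comparison of the floor divisors yields $\J(\widehat{R},\widehat{\Delta}+(1/n)\Div g)\subseteq \J(\widehat{R},\widehat{\Delta},(\ba\widehat{R})^t)$; summing over $n,g$ gives the inclusion ``$\supseteq$'' in the formula. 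For the opposite inclusion, I would choose a sufficiently general $g\in\ba^{\lceil nt\rceil}$ so that $\mu^*\Div g = \lceil nt\rceil F + E_g$ with $E_g$ reduced and disjoint from the support of $\mu^*(K_{\widehat{R}}+\widehat{\Delta})+F$, and then pick $n$ large enough that $\lceil nt\rceil/n$ lies above $t$ within an interval on which the relevant floor stabilizes; this forces $\J(\widehat{R},\widehat{\Delta}+(1/n)\Div g)=\J(\widehat{R},\widehat{\Delta},(\ba\widehat{R})^t)$. The main obstacle I expect is executing this generic-element construction in the completed setting, where genericity must be arranged on $Y$ rather than on $\Spec\widehat{R}$; once secured, combining the summation formula with the previous paragraph finishes the proof.
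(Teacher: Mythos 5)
Your proposal is correct, and its main reduction is exactly the paper's: unfold the $\ba^t$-closure into the intersection over $n$ and $g\in\ba^{\lceil nt\rceil}$, identify each piece via Proposition \ref{BCM closure and module closure} and Theorem \ref{Main Theorem} with $\Ann_E\,\J(\widehat{R},\widehat{\Delta},g^{1/n})$, and pass the annihilator through the intersection by Matlis duality to obtain $\sum_{n}\sum_{g}\J(\widehat{R},\widehat{\Delta},g^{1/n})$. The one point where you genuinely diverge is the final summation identity $\sum_{n}\sum_{g\in\ba^{\lceil nt\rceil}}\J(\widehat{R},\widehat{\Delta},g^{1/n})=\J(\widehat{R},\widehat{\Delta},(\ba\widehat{R})^t)$: the paper isolates this as a separate lemma and deduces it from a generalization of Takagi's summation formula $\J(R,\Delta,(\ba_1+\cdots+\ba_n)^t)=\sum_{\lambda_1+\cdots+\lambda_n=t}\J(R,\Delta,\ba_1^{\lambda_1}\cdots\ba_n^{\lambda_n})$, applied with $t=q/p$ to the generators $g_1,\dots,g_l$ of $\ba$ and the monomials $g_1^{i_1}\cdots g_l^{i_l}$ with $i_1+\cdots+i_l=nq$, whereas you give a direct log-resolution argument using a general element of $\ba^{\lceil nt\rceil}$. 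Both routes work; yours is more self-contained but carries the burden of the generic-element step, and the completion issue you flag is easily sidestepped by proving the identity over $R$ itself (a general $\C$-linear combination of generators of $\ba^{\lceil nt\rceil}$, which lies in $R$, does the job on a log resolution of $(\Spec R,\Delta,\ba)$) and then pulling back along the flat map $\Spec\widehat{R}\to\Spec R$ --- which is in effect what the paper does by stating its lemma for the uncompleted ring.
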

\begin{proof}
	Let $E$ be the injective hull of the residue field of $R$. Then
	\begin{eqnarray*}
		0_E^{\cl_{\widehat{B_{\Delta_R}},\ba^t}} &=& \bigcap_{n\in \N}\bigcap_{g\in\ba^{\lceil nt \rceil}} 0_E^{\cl_{\widehat{B_{\Delta_R}}, g^{\frac{1}{n}}}} \\
		&=& \bigcap_{n\in\N}\bigcap_{g\in\ba^{\lceil nt \rceil}}\Ann_E \J(\widehat{R},\widehat{\Delta},g^{\frac{1}{n}}) \\
		&=& \Ann_E \sum_{n\in \N}\sum_{g\in\ba^{\lceil nt \rceil}}\J (\widehat{R},\widehat{\Delta},g^{\frac{1}{n}}) \\
		&=& \Ann_E \J(\widehat{R},\widehat{\Delta},(\ba \widehat{R})^t),
	\end{eqnarray*}
	where the second equality follows from Theorem \ref{Main Theorem}.
	Hence, we have
	\[
	\Ann_{\widehat{R}}0_E^{\cl_{\widehat{B_{\Delta_R}},\ba^t}}=\J(\widehat{R},\widehat{\Delta},(\ba \widehat{R})^t).
	\]
\end{proof}
The next lemma is a generalization of \cite[Theorem 3.2]{Formulas}.
\begin{lem}
	Let $R$ be a normal local domain essentially of finite type over $\C$, $\Delta\ge 0$ an effective $\Q$-Weil divisor such that $K_R+\Delta$ is $\Q$-Cartier. Let $\ba_1,\dots,\ba_n\subseteq R$ be nonzero ideals and $t>0$ a positive rational number.
	Then we have
	\[
	\J(R,\Delta,(\ba_1+\dots+\ba_n)^t)=\sum_{\lambda_1+\dots+\lambda_n=t}\J(R,\Delta,\ba_1^{\lambda_1}\cdots \ba_n^{\lambda_n}).
	\]
\end{lem}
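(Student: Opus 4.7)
My plan is to prove the two containments separately: $\supseteq$ by monotonicity of multiplier ideals, and $\subseteq$ by transfer to positive characteristic via Proposition \ref{multiplier approximation}, where the analogous sum formula for generalized test ideals is known (Hara-Yoshida, Takagi).

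For $\supseteq$, fix any tuple $(\lambda_1,\dots,\lambda_n)\in\Q_{\ge 0}^n$ with $\sum_i\lambda_i=t$, and take a common log resolution $f:Y\to X$ of $(X,\Delta,\ba_1\cdots\ba_n(\ba_1+\cdots+\ba_n))$ with $\ba_i\sO_Y=\sO_Y(-F_i)$ and $(\ba_1+\cdots+\ba_n)\sO_Y=\sO_Y(-G)$, where $G=\min_i F_i$ componentwise. For each prime divisor $E$ on $Y$ one has $\sum_i\lambda_i\,\mathrm{ord}_E(F_i)\ge(\sum_i\lambda_i)\min_j\mathrm{ord}_E(F_j)=t\,\mathrm{ord}_E(G)$, so $\lfloor f^*(K_X+\Delta)+\sum_i\lambda_iF_i\rfloor\ge\lfloor f^*(K_X+\Delta)+tG\rfloor$. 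Pushing forward gives $\J(R,\Delta,\ba_1^{\lambda_1}\cdots\ba_n^{\lambda_n})\subseteq\J(R,\Delta,(\ba_1+\cdots+\ba_n)^t)$, and summing over all such tuples yields $\supseteq$.

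For $\subseteq$, induct on $n$ to reduce to $n=2$; write $I=\ba_1$ and $J=\ba_2$. The well-known positive-characteristic sum formula for generalized test ideals of pairs asserts that, for almost all $p$,
\[
\tau(R_p,\Delta_p,(I_p+J_p)^t)=\sum_{\lambda+\mu=t,\ \lambda,\mu\in\Q_{\ge 0}}\tau(R_p,\Delta_p,I_p^\lambda J_p^\mu).
\]
By the discreteness of jumping numbers of multiplier ideals, only finitely many distinct ideals $\J(R,\Delta,I^\lambda J^{t-\lambda})$ appear as $\lambda$ ranges over $[0,t]\cap\Q$; let $\Lambda\subseteq\Q_{\ge 0}^2$ be a finite set of exponent pairs realizing all of them. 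Using Proposition \ref{multiplier approximation} applied both to $(I+J)^t$ and to each $(I^\lambda J^\mu)$ with $(\lambda,\mu)\in\Lambda$, together with the fact that ultraproducts commute with finite sums, one arranges
\[
\J(R,\Delta,(I+J)^t)\,R_\infty\;=\;\ulim_p\tau(R_p,\Delta_p,(I_p+J_p)^t)\;\subseteq\;\sum_{(\lambda,\mu)\in\Lambda}\J(R,\Delta,I^\lambda J^\mu)\,R_\infty.
\]
Intersecting with $R$ and invoking faithful flatness of $R\to R_\infty$ (so that $KR_\infty\cap R=K$ for every ideal $K\subseteq R$) yields the desired inclusion.

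The main obstacle is a uniformity-in-$p$ issue in the middle step: one must ensure that, for almost all $p$, the characteristic-$p$ sum $\sum_\lambda\tau(R_p,\Delta_p,I_p^\lambda J_p^\mu)$ is captured (as a sum of ideals of $R_p$) by the fixed finite index set $\Lambda$ coming from characteristic zero. This follows either by applying the reduction-mod-$p$ argument underlying Proposition \ref{multiplier approximation} simultaneously to each of the finitely many multiplier ideals $\J(R,\Delta,I^\lambda J^{t-\lambda})$ with $(\lambda,t-\lambda)\in\Lambda$, so that the char-$p$ jumping structure on $\{(\lambda,t-\lambda)\}$ specializes from the char-$0$ one, or by a direct Noetherian/compactness argument in $R_p$ showing that the bounded number of distinct test ideals on the simplex translates into a uniform bound on the number of summands needed. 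Once this uniformity is in place, the transfer via $\ulim_p$ is routine and completes the proof.
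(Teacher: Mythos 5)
The paper offers no proof of this lemma at all: it is stated as ``a generalization of \cite[Theorem 3.2]{Formulas}'' and the argument is left implicit, so your plan is being measured against the intended argument (Takagi's proof of the cited theorem, with classical reduction modulo $p\gg 0$ replaced by approximations). Your strategy is indeed essentially that argument, and the $\supseteq$ direction is correct as written (note you only need $G\le \min_i F_i$, which follows from $\ba_i\subseteq \ba_1+\dots+\ba_n$; exact equality is irrelevant).

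For $\subseteq$ you have correctly isolated the genuine difficulty: the characteristic-$p$ sum $\sum_{\lambda+\mu=t}\tau(R_p,\Delta_p,\ba_{1p}^{\lambda}\ba_{2p}^{\mu})$ is a finite sum by Noetherianity, but the exponents realizing it a priori depend on $p$ and need not lie in the finite set $\Lambda$ determined in characteristic zero. Of your two proposed remedies, only the first is viable, and it should be made precise as follows: fix one log resolution $f\colon Y\to X$ of $(X,\Delta,\ba_1\ba_2(\ba_1+\ba_2))$; as $(\lambda,\mu)$ ranges over the segment $\lambda+\mu=t$, the divisor $\lfloor f^*(K_X+\Delta)+\lambda F_1+\mu F_2\rfloor$ takes only finitely many values, so for $p\gg 0$ \emph{uniformly in} $(\lambda,\mu)$ one has the one-sided containment $\tau(R_p,\Delta_p,\ba_{1p}^{\lambda}\ba_{2p}^{\mu})\subseteq \J(R,\Delta,\ba_1^{\lambda}\ba_2^{\mu})_p$, because that containment (the same input as Lemma \ref{lemma of Main thm}, i.e.\ \cite[Theorem 2.13]{interpret. multiplier}) holds for the reduction of the fixed resolution and does not require $p$ to be large relative to the exponents; this dominates the whole characteristic-$p$ sum by the reduction of the finite characteristic-zero sum. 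Your second remedy does not work as stated: a uniform bound on the \emph{number} of summands needed in $R_p$ gives no control on whether those test ideals are contained in the reductions of the characteristic-zero multiplier ideals. Two smaller points: the induction on $n$ requires the summation formula with an extra fixed factor $\ba_1^{\lambda_1}$ (mixed exponents), which is the form in which Takagi's characteristic-$p$ theorem is actually proved and should be the statement you quote; and the final descent should be phrased via $\J(R,\Delta,(\ba_1+\ba_2)^t)_p=\tau(R_p,\Delta_p,(\ba_{1p}+\ba_{2p})^t)$ from Proposition \ref{multiplier approximation} together with faithful flatness of $R\to R_\infty$, exactly as you indicate.
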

\begin{lem}
	In the setting of Definition \ref{closure with ideal}, we have
	\[
	\sum_{n\in\N}\sum_{g\in\ba^{\lceil nt \rceil}}\J(S,\Delta_S,g^{\frac{1}{n}})=\J(S,\Delta_S,(\ba S)^t).
	\]
\end{lem}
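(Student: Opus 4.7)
The plan is to establish the two inclusions separately. The inclusion $\sum_{n}\sum_{g}\J(S,\Delta_S,g^{1/n})\subseteq \J(S,\Delta_S,(\ba S)^t)$ should follow directly from monotonicity of multiplier ideals, while the reverse inclusion will be deduced from the summation formula of the previous lemma applied to a finite generating set of $\ba$.

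For the inclusion $\subseteq$: fix $n\in\N$ and $g\in\ba^{\lceil nt\rceil}$. Then $gS\subseteq (\ba S)^{\lceil nt\rceil}$, so monotonicity of multiplier ideals under containment of the ``ideal factor'' gives
\[
\J(S,\Delta_S,(gS)^{1/n})\subseteq \J\bigl(S,\Delta_S,((\ba S)^{\lceil nt\rceil})^{1/n}\bigr)=\J(S,\Delta_S,(\ba S)^{\lceil nt\rceil/n}).
\]
Since $\lceil nt\rceil/n\ge t$, another application of monotonicity (now in the exponent) yields the desired inclusion $\J(S,\Delta_S,(gS)^{1/n})\subseteq \J(S,\Delta_S,(\ba S)^t)$, and summing over $n$ and $g$ produces the asserted inclusion of sums.

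For the reverse inclusion, write $\ba=(a_1,\ldots,a_m)$, so that $\ba S=a_1 S+\cdots+a_m S$. Applying the previous lemma to the ideals $a_1 S,\ldots,a_m S\subseteq S$ and the exponent $t$, we obtain
\[
\J(S,\Delta_S,(\ba S)^t)=\sum_{\lambda_1+\cdots+\lambda_m=t}\J\bigl(S,\Delta_S,(a_1S)^{\lambda_1}\cdots(a_mS)^{\lambda_m}\bigr),
\]
where the sum runs over tuples of nonnegative rationals summing to $t$. For any such tuple, choose a common denominator $n\in\N$, so that $\lambda_i=k_i/n$ with $k_i\in\N$ and $k_1+\cdots+k_m=nt\in\N$. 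Setting $g:=a_1^{k_1}\cdots a_m^{k_m}\in\ba^{k_1+\cdots+k_m}=\ba^{nt}\subseteq\ba^{\lceil nt\rceil}$, the product of principal pairs collapses to a single principal pair, namely $(a_1S)^{\lambda_1}\cdots(a_mS)^{\lambda_m}=(gS)^{1/n}$ as triples for the purpose of multiplier ideals. Hence each summand on the right-hand side is one of the terms $\J(S,\Delta_S,(gS)^{1/n})$ appearing in the double sum of the statement, which yields $\J(S,\Delta_S,(\ba S)^t)\subseteq \sum_{n}\sum_{g\in\ba^{\lceil nt\rceil}}\J(S,\Delta_S,g^{1/n})$.

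The main conceptual input is the summation formula cited as the previous lemma; once that is available the argument is essentially formal, and the only minor obstacle is bookkeeping with the ceiling $\lceil nt\rceil$, which is handled simply by choosing $n$ to clear the denominator of $t$ so that $\lceil nt\rceil=nt$.
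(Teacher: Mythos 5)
Your proof is correct and follows essentially the same route as the paper: the forward inclusion by monotonicity of multiplier ideals, and the reverse inclusion by applying the summation formula to the principal ideals $a_iS$ generated by a finite generating set of $\ba$ and then clearing denominators so that each summand $\J(S,\Delta_S,(a_1^{k_1}\cdots a_m^{k_m}S)^{1/n})$ appears in the double sum. The paper merely parametrizes the tuples differently (writing $t=q/p$ and summing over integer tuples with $i_1+\cdots+i_l=nq$ and exponent $1/(np)$), which is the same bookkeeping you perform.
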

\begin{proof}
	$\sum_{n\in\N}\sum_{g\in\ba^{\lceil nt \rceil}}\J(S,\Delta_S,g^{1/n})\subseteq \J(S,\Delta_S,(\ba S)^t)$ is clear.
	If $t=q/p$, $p,q>0$ and $\ba=(g_1,\dots,g_l)$, then
	\begin{eqnarray*}
		\sum_{n\in\N}\sum_{g\in\ba^{\lceil nt \rceil}}\J(S,\Delta_S,g^{\frac{1}{n}}) & \supseteq & \sum_{n\in \N}\sum_{i_1+\dots+i_l=nq}\J(S,\Delta_S,(g_1^{i_1}\cdots g_l^{i_l})^{\frac{1}{np}}) \\
		&=&\J(S,\Delta_S,(\ba S)^t),
	\end{eqnarray*}
	by the above lemma.
\end{proof}
\begin{thm}\label{a behavior under pure ring extensions}
	Let $R\hookrightarrow S$ be a pure local homomorphism of normal local domains essentially of finite type over $\C$. Fix effective canonical divisors $K_R$ and $K_S$ on $\Spec R$ and on $\Spec S$, respectively. Let $\Delta_R, \Delta_S \ge0$ be effective $\Q$-Weil divisors on $\Spec R$, $\Spec S$ such that $K_R+\Delta_R$, $K_S+\Delta_S$ are $\Q$-Cartier. Take normal domains $R',S'$ and morphisms $\pi_R,\pi_S$ as in Setting \ref{Setting 7}. Moreover, let $\ba\subseteq R$ be a nonzero ideal and $t>0$ a positive rational number. If $R'(\pi_R^*\Delta_R)\subseteq S'(\pi_S^*\Delta_S)$, then we have
	\[
	\J(S,\Delta_S,(\ba S)^t)\cap R\subseteq \J(R,\Delta_R,\ba^t).
	\]
\end{thm}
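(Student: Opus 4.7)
The plan is to combine the module-closure description of multiplier ideals established in the propositions above with the purity hypothesis. First I would reduce to completions: by flat base change of multiplier ideals along $S\to\widehat{S}$ and faithful flatness of $R\to\widehat{R}$, the desired inclusion is equivalent to
\[
\J(\widehat{S},\widehat{\Delta_S},(\ba\widehat{S})^t)\cap R\;\subseteq\;\J(\widehat{R},\widehat{\Delta_R},(\ba\widehat{R})^t),
\]
where $R$ is viewed inside $\widehat{R}$ via $R\to\widehat{R}$ and inside $\widehat{S}$ via $R\to S\to\widehat{S}$. By the proposition computing $\tau_{\cl_{\widehat{B_\Delta},\ba^t}}$, the multiplier ideal on each side equals the simultaneous annihilator of $0_N^{\cl_{\widehat{B_\Delta},\ba^t}}$ over all modules $N$.

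Next I would set up a transfer map. Taking the canonical big Cohen-Macaulay algebras $B_R=\ulim_p R_p^+$ and $B_S=\ulim_p S_p^+$ with compatible embeddings $R_p^+\hookrightarrow S_p^+$ extending $R_p\to S_p$, one obtains a ring map $B_R\to B_S$ sending $g^{1/n}\in R^+$ to $g^{1/n}\in S^+$ for every $g\in R^\circ$. The hypothesis $R'(\pi_R^*\Delta_R)\subseteq S'(\pi_S^*\Delta_S)$ (which in particular forces $R'\subseteq S'$) together with this map then assembles into a natural $\widehat{S}$-algebra homomorphism
\[
\Phi\colon\widehat{B_{\Delta_R}}\otimes_{\widehat{R}}\widehat{S}\longrightarrow \widehat{B_{\Delta_S}}
\]
respecting the chosen $n$-th roots.

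For the core step, fix $z\in\J(\widehat{S},\widehat{\Delta_S},(\ba\widehat{S})^t)\cap R$ and $\xi\in 0_{E_R}^{\cl_{\widehat{B_{\Delta_R}},\ba^t}}$. For every $n\in\N$ and $g\in\ba^{\lceil nt\rceil}$ the relation $g^{1/n}\otimes\xi=0$ in $\widehat{B_{\Delta_R}}\otimes_{\widehat{R}}E_R$, tensored with $\widehat{S}$ over $\widehat{R}$ and pushed through $\Phi\otimes\operatorname{id}$ via the canonical isomorphism
\[
\widehat{B_{\Delta_R}}\otimes_{\widehat{R}}E_R\otimes_{\widehat{R}}\widehat{S}\;\cong\;\bigl(\widehat{B_{\Delta_R}}\otimes_{\widehat{R}}\widehat{S}\bigr)\otimes_{\widehat{S}}\bigl(E_R\otimes_{\widehat{R}}\widehat{S}\bigr),
\]
forces $g^{1/n}\otimes(\xi\otimes 1)=0$ in $\widehat{B_{\Delta_S}}\otimes_{\widehat{S}}(E_R\otimes_{\widehat{R}}\widehat{S})$. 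With $N:=E_R\otimes_{\widehat{R}}\widehat{S}$, this says $\xi\otimes 1\in 0_N^{\cl_{\widehat{B_{\Delta_S}},\ba^t}}$, so $z(\xi\otimes 1)=0$. Since $z\in R\subseteq\widehat{R}$ commutes with the balanced tensor, $(z\xi)\otimes 1=0$ in $E_R\otimes_{\widehat{R}}\widehat{S}$, and $E_R\to E_R\otimes_{\widehat{R}}\widehat{S}$ is injective because purity of $R\to S$ yields injectivity of $E_R\to E_R\otimes_R S$, faithful flatness of $S\to\widehat{S}$ extends this to $E_R\hookrightarrow E_R\otimes_R\widehat{S}$, and $E_R\otimes_R\widehat{R}=E_R$ gives $E_R\otimes_R\widehat{S}=E_R\otimes_{\widehat{R}}\widehat{S}$. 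Therefore $z\xi=0$; as $\xi$ was arbitrary, $z\in\J(\widehat{R},\widehat{\Delta_R},(\ba\widehat{R})^t)$, as required.

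The main obstacle is the construction of $\Phi$: one must arrange the big Cohen-Macaulay algebras $B_R,B_S$, the finite covers $R'\subseteq S'$, and the trivializations of the divisorial sheaves so that the inclusion $R'(\pi_R^*\Delta_R)\subseteq S'(\pi_S^*\Delta_S)$ genuinely produces a well-defined $\widehat{R}$-linear map at the level of tensor products compatible with the $n$-th roots. Once $\Phi$ is in hand, the rest is a direct chase through Definitions \ref{generalized closure} and \ref{closure with ideal} together with purity and faithful flatness; it is precisely the uniform description of the closure as an intersection over generators $g\in\ba^{\lceil nt\rceil}$ that removes the principal-ideal restriction of \cite[Corollary 5.30]{TY}.
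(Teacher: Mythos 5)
Your proposal is correct and follows essentially the same route as the paper: reduce to completions by purity/flatness, transport the closure relation through the map $\widehat{\B_{\Delta_R}}\to\widehat{\B_{\Delta_S}}$ induced by $R_p^+\hookrightarrow S_p^+$ and the hypothesis $R'(\pi_R^*\Delta_R)\subseteq S'(\pi_S^*\Delta_S)$, and conclude via purity of $\widehat{R}\to\widehat{S}$ together with the identification of the multiplier ideal as $\Ann 0_E^{\cl}$. The only cosmetic difference is that you chase the single module $E_R$ (using Proposition \ref{Prop test ideal of module closure}) where the paper runs the inclusion $0_M^{\cl_{\widehat{\B_{\Delta_R}},\ba^t}}\subseteq 0_{\widehat{S}\otimes_{\widehat{R}}M}^{\cl_{\widehat{\B_{\Delta_S}},\ba^t}}$ over all $\widehat{R}$-modules $M$.
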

\begin{proof}
	Since $R\hookrightarrow S$ is pure, $\widehat{R} \hookrightarrow \widehat{S}$ is pure, see \cite[Corollary 3.2.1]{Puresubrings of comm}. Since $R\to\widehat{R}$, $S\to\widehat{S}$ are pure, it is enough to show
	\[
	\J(\widehat{S},\widehat{\Delta_S},(\ba \widehat{S})^t)\cap \widehat{R}\subseteq \J(\widehat{R},\widehat{\Delta_R},(\ba \widehat{R})^t).
	\]
	Let $\B(R)$, $\B(S)$ be the canonical big Cohen-Macaulay algebras. Let $\widehat{\B_{\Delta_R}}:=\widehat{\B(R)_{\Delta_R}}$ and $\widehat{\B_{\Delta_S}}:=\widehat{\B(S)_{\Delta_S}}$. Take an $\widehat{R}$-module $M$. Then we have a commutative diagram
	\[
	\xymatrix{
		\widehat{R} \ar@{^{(}->}[r]^{\text{pure}} \ar[d] & \widehat{S} \ar[d] \\
		\widehat{\B_{\Delta_R}} \ar[r] & \widehat{\B_{\Delta_S}}
	}.
	\]
	Tensoring the commutative diagram with $M$, we have
	\[
	\xymatrix{
		M \ar@{^{(}->}[r] \ar[d] & \widehat{S}\otimes_{\widehat{R}}M \ar[d] \\
		\widehat{\B_{\Delta_R}}\otimes_{\widehat{R}}M \ar[r] & \widehat{\B_{\Delta_S}}\otimes_{\widehat{R}}M
	}.
	\]
	Hence, we have
	\[
	0_{M}^{\cl_{\widehat{\B_{\Delta_R}},\ba^t}}\subseteq 0_{\widehat{S}\otimes_{\widehat{R}}M}^{\cl_{\widehat{\B_{\Delta_S}},\ba^t}}.
	\]
	Then we have
	\begin{eqnarray*}
		\J(\widehat{R},\widehat{\Delta_R},\ba^t)&=&\bigcap_{M} \Ann_{\widehat{R}}0_M^{\cl_{\widehat{\B_{\Delta_R}},\ba^t}} \\
		&\supseteq & \bigcap_{M}\Ann_{\widehat{R}}0_{M\otimes S}^{\cl_{\widehat{\B_{\Delta_S}},\ba^t}} \\
		&\supseteq& \bigcap_N \Ann_{\widehat{R}}0_N^{\cl_{\widehat{\B_{\Delta_S}},\ba^t}}\\
		&=& \bigcap_N (\Ann_{\widehat{S}}0_N^{\cl_{\widehat{\B_{\Delta_S}},\ba^t}}\cap \widehat{R}) \\
		&=& (\Ann_{\widehat{S}}0_{E_S}^{\cl_{\widehat{\B_{\Delta_S}},\ba^t}})\cap \widehat{R} \\
		&=& (\Ann_{\widehat{S}}\bigcap_{n\in\N}\bigcap_{g\in\ba^{\lceil nt \rceil}} 0_{E_S}^{\cl_{\widehat{\B_{\Delta_S}},g^{\frac{1}{n}}}})\cap \widehat{R} \\
		&=& (\Ann_{\widehat{S}}\Ann_{E_S}\sum_{n\in\N}\sum_{g\in \ba^{\lceil nt \rceil}}\J(\widehat{S},\widehat{\Delta_S},g^{\frac{1}{n}}))\cap \widehat{R} \\
		&=& \J(\widehat{S},\widehat{\Delta_S},(\ba \widehat{S})^t))\cap \widehat{R},
	\end{eqnarray*}
	where $M$ runs through all $\widehat{R}$-modules, $N$ runs through all $\widehat{S}$-modules and $E_S$ is the injective hull of the residue field of $S$.
\end{proof}
As a corollary, we have a generalization of \cite[Corollary 5.30]{TY} to the case that $\ba$ is not necessarily a principal ideal.
\begin{cor}\label{Main Thm 2}
		Let $R\hookrightarrow S$ be a pure local homomorphism of normal local domains essentially of finite type over $\C$. Suppose that $R$ is $\Q$-Gorenstein. Fix effective canonical divisors $K_R$ and $K_S$ on $\Spec R$ and on $\Spec S$, respectively. Let $\Delta_S$ be an effective $\Q$-Weil divisor on $\Spec S$ such that $K_S+\Delta_S$ is $\Q$-Cartier. Let $\ba\subseteq R$ be a nonzero ideal and $t>0$ a positive rational number. Then we have
	\[
	\J(S,\Delta_S,(\ba S)^t)\cap R\subseteq \J(R,\ba^t).
	\]
\end{cor}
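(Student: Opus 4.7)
The plan is to deduce Corollary \ref{Main Thm 2} as a direct specialization of Theorem \ref{a behavior under pure ring extensions} by taking $\Delta_R=0$. Since $R$ is $\Q$-Gorenstein, $K_R$ itself is $\Q$-Cartier, so $K_R+\Delta_R=K_R$ is $\Q$-Cartier, and by the very definition of the multiplier ideal we have $\J(R,\Delta_R,\ba^t)=\J(R,\ba^t)$. Thus it suffices to verify that the hypothesis $R'(\pi_R^*\Delta_R)\subseteq S'(\pi_S^*\Delta_S)$ from Setting \ref{Setting 7} can be arranged for suitable choices of $R'$ and $S'$.

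First, I would choose compatible absolute integral closures. Since $R\hookrightarrow S$ is an injective homomorphism of domains, we get an embedding $\operatorname{Frac}(R)\hookrightarrow\operatorname{Frac}(S)$; fixing an algebraic closure of $\operatorname{Frac}(S)$ and defining $S^+$ to be the integral closure of $S$ inside it and $R^+$ to be the integral closure of $R$ inside $S^+$, we obtain $R^+\subseteq S^+$. Now write $rK_R=\Div(f_R)$ with $f_R\in R^\circ$ and $r'(K_S+\Delta_S)=\Div(f_S)$ with $f_S\in S^\circ$. Take $R'\subseteq R^+$ to be an integrally closed finite extension of $R$ containing $f_R^{1/r}$, and choose $S'\subseteq S^+$ to be an integrally closed finite extension of $S$ containing both $f_S^{1/r'}$ and the image of $R'$, and large enough so that $\pi_S^*\Delta_S$ is an honest Weil divisor on $\Spec S'$. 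Such choices exist by general finiteness arguments, so both $R'$ and $S'$ satisfy the requirements of Setting \ref{Setting 7}.

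With these choices, $\pi_R^*\Delta_R=0$, so $R'(\pi_R^*\Delta_R)=R'$. Since $\Delta_S\ge 0$, the divisor $\pi_S^*\Delta_S$ is also effective, which gives the inclusion $S'\subseteq S'(\pi_S^*\Delta_S)$ of fractional ideals. Combined with $R'\subseteq S'$ from the previous step, this yields
\[
R'(\pi_R^*\Delta_R)=R'\subseteq S'\subseteq S'(\pi_S^*\Delta_S),
\]
which is precisely the hypothesis needed to invoke Theorem \ref{a behavior under pure ring extensions}.

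Applying that theorem with $\Delta_R=0$ gives
\[
\J(S,\Delta_S,(\ba S)^t)\cap R\subseteq \J(R,0,\ba^t)=\J(R,\ba^t),
\]
which is exactly the conclusion. The only genuine subtlety is the compatible choice of absolute integral closures ensuring $R'\subseteq S'$; this is a standard manipulation but is the one place where the proof uses more than a formal rewriting of Theorem \ref{a behavior under pure ring extensions}.
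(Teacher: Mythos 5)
Your proposal is correct and follows the same route as the paper: the paper's own proof simply takes $R'$ to be the integral closure of $R[f^{1/r}]$ in $R^+$ and invokes Theorem \ref{a behavior under pure ring extensions} with $\Delta_R=0$. You have merely spelled out the verification of the hypothesis $R'(\pi_R^*\Delta_R)\subseteq S'(\pi_S^*\Delta_S)$ (via $R'\subseteq S'$ and effectivity of $\pi_S^*\Delta_S$), which the paper leaves implicit.
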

\begin{proof}
	Let $R'$ be the integral closure of $R[f^{1/r}]$ in $R^+$. Then the result follows from Theorem \ref{a behavior under pure ring extensions}.
\end{proof}

\section{$\B$-regularity}
 As another application of the main theorem, we will give a partial answer to \cite[Remark 3.10]{log-terminal}. For this, we will review the definition of $\B$-regularity.
\begin{defn}[{\cite[Definition 4.3]{canonical BCM}}]
	Let $R$ be a normal $\Q$-Gorenstein local domain essentially of finite type over $\C$.
	\begin{enumerate}
		\item $R$ is said to be {\it weakly $\B$-regular} if $R\to\B(R)$ is cyclically pure.
		\item $R$ is said to be {\it $\B$-regular} if every localization of $R$ at a prime ideal is weakly $\B$-regular.
	\end{enumerate}
\end{defn}
 \begin{thm}\label{partial answer to Schoutens}
 	Let $R$ be a normal $\Q$-Gorenstein local domain. Then the following are equivalent:
 	\begin{enumerate}
 		\item $R$ has log-terminal singularities.
 		\item $R$ is ultra-$F$-regular.
 		\item $R$ is weakly generically $F$-regular.
 		\item $R$ is generically $F$-regular.
 		\item $R$ is weakly $\B$-regular.
 		\item $R$ is $\B$-regular.
 		\item $\widehat{R}$ is $\text{BCM}_{\widehat{\B(R)}}$-regular.
 	\end{enumerate}
 \end{thm}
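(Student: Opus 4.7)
The plan hinges on the fact that $(1)\Leftrightarrow(7)$ is an immediate consequence of Theorem \ref{Main Theorem} applied with $\Delta=0$: since $R$ is $\Q$-Gorenstein, log-terminality is equivalent to $\J(R)=R$, and hence (by faithful flatness of completion) to $\J(\widehat R)=\widehat R$; Theorem \ref{Main Theorem} then identifies this with $\tau_{\widehat{\B(R)}}(\widehat R)=\widehat R$. Combined with $(1)\Leftrightarrow(2)$ from Proposition \ref{ultra-F-regular log-terminal}, this reduces the theorem to linking any one of $(3)$--$(6)$ to $(1)$ or $(7)$.

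The first forward implication I would prove is $(1)\Rightarrow(6)$: log-terminality localizes, so $R_\p$ is again a normal $\Q$-Gorenstein local domain essentially of finite type over $\C$ which is log-terminal; applying $(1)\Leftrightarrow(7)$ at $R_\p$ yields that $\widehat{R_\p}$ is $\mathrm{BCM}_{\widehat{\B(R_\p)}}$-regular, i.e.\ the natural map $H_{\p R_\p}^d(\omega_{R_\p})\to H_{\p R_\p}^d(\widehat{\B(R_\p)})$ is injective. In the $\Q$-Gorenstein setting, Matlis duality converts this into purity of $\widehat{R_\p}\to\widehat{\B(R_\p)}$, and faithful flatness of completion descends the conclusion to cyclic purity of $R_\p\to\B(R_\p)$. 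Hence $R_\p$ is weakly $\B$-regular for every $\p$, so $R$ is $\B$-regular, and $(6)\Rightarrow(5)$ is definitional. For $(5)\Rightarrow(3)$, I would use Hochster-Huneke's containment $I_p^*\subseteq I_pR_p^+$ (\cite[Theorem 5.1]{plus closure}): given $z\in I^{*\gen}$ with approximation $z_p\in I_p^*$, we obtain $z\in I\B(R)$ by taking ultraproducts, and cyclic purity forces $z\in I$. The chain $(1)\Rightarrow(4)\Rightarrow(3)$ then follows by applying the composite $(1)\Rightarrow(3)$ to each localization.

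To close the cycle I would prove $(3)\Rightarrow(1)$ by transferring the closure condition to characteristic $p\gg 0$: weak generic F-regularity of $R$ forces $R_p$ to be weakly F-regular for almost all $p$, and since $R$ is $\Q$-Gorenstein this is equivalent to strong F-regularity of $R_p$, so $\tau(R_p)=R_p$ for almost all $p$; Proposition \ref{multiplier approximation} then yields $\J(R)=R$, i.e.\ $R$ is log-terminal. The main obstacle is exactly this last step: passing from an ideal-level statement ($I^{*\gen}=I$ for every ideal $I$) to the module-level equality $\tau(R_p)=R_p$ requires that weak and strong F-regularity coincide, which is where the $\Q$-Gorenstein hypothesis is essential. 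A secondary subtlety lies in the Matlis-duality step of $(1)\Rightarrow(6)$, where genuine purity (not merely cyclic purity) is needed to descend from the completion; again the $\Q$-Gorenstein structure is what allows the BCM test ideal to be detected by injectivity on the single module $H_\m^d(\omega_R)$.
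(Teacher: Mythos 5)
Your overall architecture---deriving $(1)\Leftrightarrow(7)$ from Theorem \ref{Main Theorem} with $\Delta=0$, $(1)\Leftrightarrow(2)$ from Proposition \ref{ultra-F-regular log-terminal}, converting $\text{BCM}_{\widehat{\B(R)}}$-regularity into (cyclic) purity of $R\to\B(R)$, and reducing $(4)$ and $(6)$ to the punctual statements by localization---matches the paper. But both return arrows you propose for closing the cycle contain genuine gaps. For $(5)\Rightarrow(3)$ you invoke $I_p^*\subseteq I_pR_p^+$ for an \emph{arbitrary} ideal: Smith's theorem \cite{plus closure} gives this only for ideals generated by (part of) a system of parameters, and for general ideals the containment ``tight closure $\subseteq$ plus closure'' is a well-known open problem. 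The containment that always holds is the opposite one, $I_pR_p^+\cap R_p\subseteq I_p^*$, which yields $I\B(R)\cap R\subseteq I^{*\gen}$ and hence the implication $(3)\Rightarrow(5)$ --- the reverse of what you claim.

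For $(3)\Rightarrow(1)$ you assert that weak generic $F$-regularity of $R$ forces $R_p$ to be weakly $F$-regular for almost all $p$. This does not follow from any \L o\'s-type transfer: a family of witnesses $z_p\in I_p^*\setminus I_p$ assembles only to an element of $R_\infty$, not of $R$, so it produces no element of $I^{*\gen}\setminus I$; moreover most ideals of $R_p$ are not approximations of ideals of $R$. This implication is essentially the hard content the paper imports from \cite[Theorem 5.24]{TY}, whose proof runs through log-terminality rather than a direct descent to characteristic $p$. Since your only arrows pointing back into $(1)$ are these two, the equivalence is not established as written. The repair is to reverse them: prove $(3)\Rightarrow(5)$ via the valid containment above, and return to $(1)$ through $(5)\Leftrightarrow(7)\Leftrightarrow(1)$, which is how the paper closes the cycle (Proposition \ref{BCM closure and module closure} identifies $0_E^{\B(R),K_R}$ with the kernel of $E\to\B(R)\otimes_RE$, purity is detected on $E$ by \cite[Lemma 2.1 (e)]{HH95}, and cyclic purity equals purity by \cite{cyclically pure}); the forward direction $(1)\Rightarrow(3)$ then comes from Hara's theorem applied to the approximations, not from your step.
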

\begin{proof}
	The equivalence of (1) and (2) follows from Proposition \ref{ultra-F-regular log-terminal} and the equivalence of (1) and (7) follows from Theorem \ref{Main Theorem}.
	Since if $R$ has log-terminal singularities, then every localization of $R$ at a prime ideal is log-terminal, it is enough to show the equivalence of (1), (3) and (5). (1) is equivalent to (3) by \cite[Theorem 5.24, Proof of Theorem 5.25]{TY}. Lastly, we will show the equivalence of (5) and (7). Let $E$ be the injective hull of the residue field of $R$. By Proposition \ref{BCM closure and module closure}, we have $0_E^{\cl_{\B(R)\otimes_R\widehat{R}}}=0_E^{\B(R),K_R}$. Hence, $E\to \B(R)\otimes_R E$ is injective if and only if $\widehat{R}$ is $\text{BCM}_{\widehat{\B(R)}}$-regular. $R\to \B(R)$ is pure if and only if $E\to \B(R)\otimes_R E$ is injective by \cite[Lemma 2.1 (e)]{HH95}. $R\to \B(R)$ is pure if and only if $R\to \B(R)$ is cyclically pure by \cite[Theorem 1.7]{cyclically pure}. Therefore, (5) is equivalent to (7).
\end{proof}
\begin{rem}
	For the equivalence of (5) and (7), see \cite[Proposition 6.14]{Ma-Schwede}.
\end{rem}
\section{Further questions and remarks}
In this section, we will consider whether $R$ is BCM-rational if $R$ has rational singularities.
The next question is a variant of \cite[Question 2.7]{seed}.
\begin{question}\label{question}
	Let $R$ be a local domain essentially of finite type over $\C$ and $B$ a big Cohen-Macaulay $R$-algebra. If $S$ is finitely generated $R$-algebra such that the following diagram commutes:
	\[
	\xymatrix{
		R \ar[r] \ar[d] & B \\
		S \ar[ru]
	}
	\]
	then does there exist a big Cohen-Macaulay $R_p$-algebra for almost all $p$ which fits into the following commutative diagram:
	\[
	\xymatrix{
		R_p \ar[r] \ar[d] & B_p \\
		S_p \ar[ru]
	}
	\]
	where $S_p$ is an $R$-approximation of $S$?
\end{question}
\begin{prop}[{cf. \cite[Conjecture 3.9]{Ma-Schwede}}]
	Let $R$ be a normal local domain essentially of finite type over $\C$ of dimension $d$. Suppose that $R$ has rational singularities. If Question \ref{question} has an affirmative answer, then $R$ is BCM-rational.
\end{prop}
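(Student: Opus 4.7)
The plan is to show that, for any big Cohen-Macaulay $R$-algebra $B$, the natural map $H_\m^d(R)\to H_\m^d(B)$ is injective; the Cohen-Macaulay property of $R$ is built into the rational-singularities hypothesis. Fix $\eta\in H_\m^d(R)$ mapping to zero in $H_\m^d(B)$ and write $\eta=[z/(x_1\cdots x_d)^t]$ for a system of parameters $x_1,\dots,x_d$ of $R$. The vanishing in $H_\m^d(B)$ furnishes elements $b_1,\dots,b_d\in B$ with $z=\sum_i x_i^t b_i$, and I would let $S:=R[b_1,\dots,b_d]\subseteq B$: this is a finitely generated $R$-subalgebra of $B$ fitting into exactly the commutative triangle $R\to S\to B$ appearing in Question \ref{question}.

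Now invoke the affirmative answer to Question \ref{question}: for almost all $p$ there exist big Cohen-Macaulay $R_p$-algebras $C_p$ equipped with $R_p$-algebra maps $S_p\to C_p$, where $S_p$ is the $R$-approximation of $S$. Set $C:=\ulim_p C_p$. The relation $z=\sum_i x_i^t b_i$ in $S$ approximates to $z_p=\sum_i x_{ip}^t b_{ip}$ in $S_p$ for almost all $p$; pushing this forward along $S_p\to C_p$ and then taking the ultraproduct shows that the image of $z$ in $C$ lies in $(x_1^t,\dots,x_d^t)C$, so $\eta$ already dies in $H_\m^d(C)$.

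It remains to apply Proposition \ref{rational and BCM rational} to $C$: since $R$ has rational singularities, the map $H_\m^d(R)\to H_\m^d(C)$ is injective, forcing $\eta=0$, and the argument concludes. The main point requiring care is that Setting \ref{setting BCM} stipulates that the fiber algebras are big Cohen-Macaulay over $R_p^+$, whereas Question \ref{question} only delivers big Cohen-Macaulay $R_p$-algebras $C_p$. I expect this to be the principal obstacle; however, the $R_p^+$-algebra hypothesis is used only in the converse direction of Proposition \ref{rational and BCM rational} (where tight closure equals plus closure on parameter ideals is invoked), while the direction we need relies only on Ma-Schwede's \cite[Proposition 3.5]{Ma-Schwede}, namely that $F$-rationality of $R_p$ forces $H_{\m_p}^d(R_p)\to H_{\m_p}^d(C_p)$ to be injective for \emph{every} big Cohen-Macaulay $R_p$-algebra $C_p$. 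Inspection confirms that the proofs in Section 4 showing $C$ is a big Cohen-Macaulay $R$-algebra, and that $H_\m^d(C)\to \ulim_p H_{\m_p}^d(C_p)$ is injective (Lemma \ref{injectivity local cohomology}), likewise only use the bare big Cohen-Macaulay hypothesis over $R_p$; hence the argument goes through.
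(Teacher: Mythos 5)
Your proof is correct and follows essentially the same route as the paper: pass to a finitely generated subalgebra $S\subseteq B$ witnessing the vanishing of $\eta$, invoke Question \ref{question} to get big Cohen-Macaulay $R_p$-algebras, and conclude via the injectivity of $H_{\m_p}^d(R_p)\to H_{\m_p}^d(C_p)$ coming from $F$-rationality of $R_p$ together with the Cohen-Macaulayness of $R_p$. Your closing observation is also right — and worth making, since the paper glosses over it — that only the forward direction of Proposition \ref{rational and BCM rational} is needed and that direction never uses the $R_p^+$-algebra structure, so the mismatch between Setting \ref{setting BCM} and the output of Question \ref{question} is harmless.
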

\begin{proof}
	Let $B$ be a big Cohen-Macaulay $R^+$-algebra. Suppose that $\eta\in \Ker (H_m^d(R)\to H_\m^d(B))$. Then there exists a finitely generated $R$-subalgebra of $B$ such that the image of $\eta$ in $H_m^d(S)$ is zero. If Question \ref{question} has an affirmative answer, we can take $S_p$ and $B_p$ as in Question \ref{question}. Then we have a commutative diagram
	\[
	\xymatrix{
	H_\m^d(R) \ar[r] \ar[d] & \ulim_p H_{\m_p}^d(R_p) \ar[d] \\
	H_{\m}^d(S) \ar[r]\ar[d]& \ulim_p H_{\m_p}^d(S_p) \ar[d] \\
	H_m^d(B) & \ulim_p H_{\m_p}^d(B_p)
	}
	\]
	By the proof of Proposition \ref{rational and BCM rational}, $\ulim_p H_{\m_p}^d(R_p)\to \ulim_p H_{\m_p}^d(S_p)$ is injective. Therefore, the image of $\eta$ in $\ulim_p H_{\m_p}^d(R_p)$ is zero. Suppose that $\eta=\lbrack \frac{y}{x^t}\rbrack$, where $y\in R$, $t\in \N$ and $x$ is the product of a system of parameters $x_1,\dots,x_d$ of $R$. Since $R_p$ is Cohen-Macaulay for almost all $p$, $y_p\in(x_{1p}^t,\dots, x_{dp}^t)$ for almost all $p$. Hence, $y\in(x_1^t,\dots,x_d^t)$ and $\eta=0$ in $H_\m^d(R)$. Thus, $H_\m^d(R)\to H_\m^d(B)$ is injective.
\end{proof}
The next result follows from a similar argument.
\begin{prop}
	Let $R$ be a normal local domain essentially of finite type over $\C$ of dimension $d$. Fix an effective canonical divisor $K_R$ on $\Spec R$. Let $\Delta\ge 0$ be an effective $\Q$-Weil divisor on $\Spec R$ such that $K_R+\Delta$ is $\Q$-Cartier. Suppose that $C$ is a big Cohen-Macaulay $R^+$-algebra. If Question \ref{question} has an affirmative answer, then we have
	\[
		\J(R,\Delta)\subseteq \tau_{\widehat{C}}(\widehat{R},\widehat{\Delta}).
	\]
\end{prop}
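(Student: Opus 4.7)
The plan is to mirror the proof of Proposition~\ref{multiplier subset BCM-test}, using Question~\ref{question} to reduce from the arbitrary big Cohen-Macaulay $R^+$-algebra $C$ to an ultraproduct situation. First take $\eta = [z/x^t] \in 0_{H_\m^d(\omega_R)}^{C,K_R+\Delta}$, where $z \in \omega_R$ and $x = x_1\cdots x_d$ is the product of a system of parameters. The hypothesis $\psi(\eta) = 0$ in $H_\m^d(C)$, together with $C$ being big Cohen-Macaulay, yields an explicit finite witness $f^{1/n}z = \sum_{i=1}^{d} x_i^t c_i$ in $C$ for some $c_1,\dots,c_d \in C$. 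Let $S$ be the finitely generated $R$-subalgebra of $C$ generated by $f^{1/n}$ and $c_1,\dots,c_d$; the witness relation already holds in $S$.

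Applying the affirmative answer to Question~\ref{question} to this $S$ produces, for almost all $p$, a big Cohen-Macaulay $R_p$-algebra $B_p$ with a factorization $R_p \to S_p \to B_p$. Taking $R$-approximations of the witness relation gives $f_p^{1/n}z_p = \sum x_{ip}^t c_{ip}$ in $S_p$, hence in $B_p$, for almost all $p$, so $\eta_p \in 0_{H_{\m_p}^d(\omega_{R_p})}^{B_p,K_{R_p}+\Delta_p}$ for almost all $p$. Next I would establish the inclusion
\[
0_{H_{\m_p}^d(\omega_{R_p})}^{B_p,K_{R_p}+\Delta_p} \subseteq 0_{H_{\m_p}^d(\omega_{R_p})}^{*\Delta_p}
\]
by constructing a big Cohen-Macaulay $R_p^+$-algebra $B_p'$ admitting a map from $B_p$, via a Hochster-Huneke algebra-modification applied to $B_p \otimes_{R_p} R_p^+$, and applying Proposition~\ref{BCM test tight closure} to identify $\Ker \psi_{B_p'}$ with $0_{H_{\m_p}^d(\omega_{R_p})}^{*\Delta_p}$.

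The proof then concludes as in Proposition~\ref{multiplier subset BCM-test}. Choose a divisorial ideal $J$ with $\omega_R \xrightarrow{\cdot g} J$ an isomorphism for some $g \in R^\circ$, translate the element-level vanishing into $g_p z_p x_p^t \in ((x_{1p}^{2t},\dots,x_{dp}^{2t}) J_p)^{*\Delta_p}$ in $J_p$, and use $a_p \in \tau(R_p,\Delta_p)$ (which follows from $a \in \J(R,\Delta)$ via Proposition~\ref{multiplier approximation}) to deduce $a_p g_p z_p x_p^t \in (x_{1p}^{2t},\dots,x_{dp}^{2t}) J_p$ for almost all $p$. \L o\'{s}'s theorem combined with faithful flatness of $R \to R_\infty$ applied to $J$ gives $agzx^t \in (x_1^{2t},\dots,x_d^{2t})J$, so $[az/x^t] = 0$ in $H_\m^d(\omega_R)$ via the isomorphism induced by $g$, and therefore $a \in \tau_{\widehat{C}}(\widehat{R},\widehat{\Delta})$.

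The main obstacle is establishing the inclusion $0^{B_p,K_{R_p}+\Delta_p} \subseteq 0^{*\Delta_p}$ for a general big Cohen-Macaulay $R_p$-algebra $B_p$ not assumed to be an $R_p^+$-algebra. If this enlargement cannot be produced uniformly in $p$, one may need to strengthen Question~\ref{question} to demand that $B_p$ be an $R_p^+$-algebra directly, or invoke a more delicate comparison between BCM closure and tight closure valid without the $R^+$-algebra assumption.
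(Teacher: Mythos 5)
Your proposal is correct and is essentially the argument the paper intends: the paper offers no proof beyond the remark that the result ``follows from a similar argument'' to the preceding proposition, namely extracting a finite witness for $\psi(\eta)=0$ inside a finitely generated $R$-subalgebra $S$ of $\widehat{C}$, invoking Question \ref{question} to obtain $S_p\to B_p$, passing to tight closure in characteristic $p$, and finishing with Proposition \ref{multiplier approximation} and \L o\'{s}'s theorem exactly as in Proposition \ref{multiplier subset BCM-test}. The obstacle you flag at the end is the only real point of care, and it is not fatal: one only needs the one-sided containment $0_{H_{\m_p}^d(\omega_{R_p})}^{B_p,K_{R_p}+\Delta_p}\subseteq 0_{H_{\m_p}^d(\omega_{R_p})}^{*\Delta_p}$, which is the direction of Proposition \ref{BCM test tight closure} analogous to $IB\cap R\subseteq I^*$ for parameter ideals and holds for an arbitrary big Cohen--Macaulay $R_p[f_p^{1/n}]$-algebra without any $R_p^+$-structure; alternatively, your enlargement of $B_p$ to a big Cohen--Macaulay $R_p^+$-algebra via seed theory is valid, and since all that is extracted from each $p$ is an ideal membership in $J_p$, the enlargement may be performed separately for each $p$ --- no uniformity in $p$ is required before applying \L o\'{s}'s theorem.
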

\begin{defn}[cf. {\cite[Definition 6.9]{Ma-Schwede}}]
	Let $R$ be a normal local domain essentially of finite type over $\C$. Fix an effective canonical divisor $K_R$ on $\Spec R$. Let $\Delta\ge 0$ be a $\Q$-Weil divisor on $\Spec R$ such that $K_R+\Delta$ is $\Q$-Cartier. Suppose that $n(K_R+\Delta)=\Div(f)$ for $f\in R^\circ$, $n\in \N$. We define
	\begin{eqnarray*}
	0_{H_\m^d(R)}^{\mathscr{B},K_R+\Delta}:=\{\eta\in H_\m^d(R)| &\exists C \text{ big Cohen-Macaulay } R^+ \text{-algebra}&\\
	&{\text{such that }f^{\frac{1}{n}}\eta =0 \text{ in } H_\m^d(C)\}.}&
	\end{eqnarray*}
	We define {\it the BCM test ideal $\tau_{\mathscr{B}}(R,\Delta)$ of $(\widehat{R},\widehat{\Delta})$} by
	\[
	\tau_{\mathscr{B}}(\widehat{R},\widehat{\Delta}):=\Ann_{\omega_{\widehat{R}}}0_{H^d_\m(R)}^{\mathscr{B},K_R+\Delta}.
	\]
\end{defn}
\begin{cor}[cf. {\cite[Theorem 6.21]{Ma-Schwede}}]
	In the setting of the above proposition, if Question \ref{question} has an affirmative answer, then we have
	\[
	\tau_{\mathscr{B}}(\widehat{R},\widehat{\Delta})=\J(\widehat{R},\widehat{\Delta}).
	\]
\end{cor}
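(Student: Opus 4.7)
The plan is to establish both inclusions by combining the preceding proposition with the main theorem of Section 6. The key observation is that, by construction, the submodule $0_{H^d_\m(R)}^{\mathscr{B},K_R+\Delta}$ is the union $\bigcup_C 0_{H^d_\m(R)}^{C,K_R+\Delta}$ taken over all big Cohen-Macaulay $R^+$-algebras $C$, where $0_{H^d_\m(R)}^{C,K_R+\Delta}=\{\eta\in H^d_\m(R)\mid f^{1/n}\eta=0 \text{ in } H^d_\m(C)\}$. Passing to annihilators therefore converts the union into an intersection.

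For the inclusion $\tau_{\mathscr{B}}(\widehat{R},\widehat{\Delta})\subseteq\J(\widehat{R},\widehat{\Delta})$, I would observe that the canonical big Cohen-Macaulay algebra $\B(R)$ is a particular big Cohen-Macaulay $R^+$-algebra, as established in Section 4. Thus $0_{H^d_\m(R)}^{\B(R),K_R+\Delta}\subseteq 0_{H^d_\m(R)}^{\mathscr{B},K_R+\Delta}$, and taking annihilators reverses the inclusion, giving $\tau_{\mathscr{B}}(\widehat{R},\widehat{\Delta})\subseteq\tau_{\widehat{\B(R)}}(\widehat{R},\widehat{\Delta})$. Theorem \ref{Main Theorem} identifies the right-hand side with $\J(\widehat{R},\widehat{\Delta})$, finishing this direction. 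Notably, this half does not require an affirmative answer to Question \ref{question}.

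For the opposite inclusion $\J(\widehat{R},\widehat{\Delta})\subseteq\tau_{\mathscr{B}}(\widehat{R},\widehat{\Delta})$, I would apply the preceding proposition to each big Cohen-Macaulay $R^+$-algebra $C$ to obtain $\J(R,\Delta)\subseteq\tau_{\widehat{C}}(\widehat{R},\widehat{\Delta})$, i.e., that $\J(R,\Delta)$ annihilates $0_{H^d_\m(\omega_R)}^{C,K_R+\Delta}$ for each $C$. Translating through the natural map $H^d_\m(R)\to H^d_\m(\omega_R)$ coming from the fixed inclusion $R\subseteq\omega_R$, this says $\J(R,\Delta)$ annihilates every $0_{H^d_\m(R)}^{C,K_R+\Delta}$. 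Since the annihilator of a union equals the intersection of annihilators, $\J(R,\Delta)$ annihilates the union $0_{H^d_\m(R)}^{\mathscr{B},K_R+\Delta}$, and completing gives the desired inclusion.

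The main obstacle I expect is the bookkeeping between the two parallel formulations of the kernel: the Ma-Schwede style definition used for $\tau_{\widehat{C}}$ involves $H^d_\m(\omega_R)$ and produces an ideal of $\widehat R$, whereas the definition of $\tau_{\mathscr{B}}$ introduced here uses $H^d_\m(R)$ and has its annihilator taken inside $\omega_{\widehat R}$. The convention fixed in Setting \ref{Setting} embedding $R\subseteq\omega_R\subseteq\operatorname{Frac} R$ provides the dictionary between these two presentations, and I would make this translation explicit once to legitimize the identification of the resulting annihilator modules with ideals of $\widehat R$ in both arguments above.
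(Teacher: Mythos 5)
Your argument is correct and is exactly the (omitted) proof the paper intends: the inclusion $\tau_{\mathscr{B}}\subseteq\J$ comes from specializing the union defining $0^{\mathscr{B},K_R+\Delta}_{H^d_\m(R)}$ to the single algebra $\widehat{\B(R)}$ and invoking Theorem \ref{Main Theorem}, while the reverse inclusion follows by applying the preceding proposition to every big Cohen--Macaulay $R^+$-algebra $C$ and using that the annihilator of a union is the intersection of the annihilators. The bookkeeping you flag between $0^{C,K_R+\Delta}_{H^d_\m(\omega_R)}$ (annihilated in $\widehat R$) and $0^{C,K_R+\Delta}_{H^d_\m(R)}$ (annihilated in $\omega_{\widehat R}$) is indeed the only point requiring care, and the surjection $H^d_\m(R)\twoheadrightarrow H^d_\m(\omega_R)$ together with the fixed embedding $R\subseteq\omega_R\subseteq\operatorname{Frac}R$ handles it as you describe.
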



\end{document}